\documentclass[10pt, reqno]{amsart}
\usepackage{amssymb}
\usepackage{hyperref}

\numberwithin{equation}{section}

\newtheorem{theorem}{Theorem}[section]

\newtheorem{lemma}[theorem]{Lemma}

\newtheorem{corollary}[theorem]{Corollary}

\newtheorem{proposition}[theorem]{Proposition}

\theoremstyle{definition}

\newtheorem{remark}[theorem]{Remark}

\theoremstyle{remark}


\newcommand{\R}{\mathbb{R}}
\newcommand{\C}{\mathbb{C}}

\newcommand{\E}{\mathcal{E}}
\newcommand{\M}{\mathcal{M}}
\newcommand{\N}{\mathcal{N}}

\let\Re=\undefined\DeclareMathOperator*{\Re}{Re}
\let\Im=\undefined\DeclareMathOperator*{\Im}{Im}

\newcommand{\PLo}{P_{\text{lo}}}
\newcommand{\PHi}{P_{\text{hi}}}
\newcommand{\ulin}{{u_\text{\upshape lin}}}

\newcommand{\eps}{\varepsilon}
\newcommand{\wh}{\widehat}
\DeclareMathOperator*{\supp}{supp}

\newcommand{\Hr}{H^1_{\text{\upshape real}}+i\dot{H}^1_{\text{\upshape real}}}

\newcommand{\jb}{\langle\nabla\rangle}

\newcommand{\qtq}[1]{\quad\text{#1}\quad}

%
%
\newcounter{smalllist}
\newenvironment{SL}{\begin{list}{{\rm(\roman{smalllist})\hss}}{%
\setlength{\topsep}{0mm}\setlength{\parsep}{0mm}\setlength{\itemsep}{0mm}%
\setlength{\labelwidth}{2.0em}\setlength{\itemindent}{2.5em}\setlength{\leftmargin}{0em}\usecounter{smalllist}%
}}{\end{list}}

%
%
\newenvironment{CI}{\begin{list}{{\ $\bullet$\ }}{%
\setlength{\topsep}{0mm}\setlength{\parsep}{0mm}\setlength{\itemsep}{0mm}%
\setlength{\labelwidth}{0mm}\setlength{\itemindent}{-1.5em}\setlength{\leftmargin}{1.5em}%
\setlength{\labelsep}{0mm} }}{\end{list}}

\begin{document}

\title[Cubic-quintic NLS]{The final-state problem for the cubic-quintic NLS with non-vanishing boundary conditions}
\begin{abstract}
We construct solutions with prescribed scattering state to the cubic-quintic NLS
$$
(i\partial_t+\Delta)\psi=\alpha_1 \psi-\alpha_{3}\vert \psi\vert^2 \psi+\alpha_5\vert \psi\vert^4 \psi
$$
in three spatial dimensions in the class of solutions with $|\psi(x)|\to c >0$ as $|x|\to\infty$.  This models disturbances in an infinite expanse of (quantum) fluid in its quiescent state ---  the limiting modulus $c$ corresponds to a local minimum in the energy density.

Our arguments build on work of Gustafson, Nakanishi, and Tsai on the (defocusing) Gross--Pitaevskii equation.  The presence of an energy-critical nonlinearity and changes in the geometry of the energy functional add several new complexities.  One new ingredient in our argument is a demonstration that solutions of such (perturbed) energy-critical equations exhibit continuous dependence on the initial data with respect to the \emph{weak} topology on $H^1_x$.
\end{abstract}
							
							\author{Rowan Killip}
							\address{Department of Mathematics,
							UCLA, Los Angeles, USA}
							\email{killip@math.ucla.edu}
							\author{Jason Murphy}
							\address{Department of Mathematics,
							University of California, Berkeley, USA}
							\email{murphy@math.berkeley.edu}
							\author{Monica Visan}
							\address{Department of Mathematics,
							UCLA, Los Angeles, USA}
							\email{visan@math.ucla.edu}
								
							\maketitle

								\section{Introduction}

We study the cubic-quintic nonlinear Schr\"odinger equation (NLS) with non-vanishing boundary conditions in three space dimensions:	
	\begin{equation}
	\label{eq:cq}
	\left\{\begin{array}{ll}
	(i\partial_t+\Delta)\psi=\alpha_1 \psi-\alpha_{3}\vert \psi\vert^2 \psi+\alpha_5\vert \psi\vert^4 \psi,\quad(t,x)\in\R\times\R^3,\\
	 \psi(0)=\psi_0.
	\end{array}\right.
	\end{equation}
We consider parameters $\alpha_1,\alpha_{3},\alpha_5>0$ so that $\alpha_{3}^2-4\alpha_1\alpha_5>0$, which guarantees that the polynomial $\alpha_1-\alpha_{3} x+\alpha_5 x^2$ has two distinct positive roots $r_0^2>r_1^2>0$. The boundary condition is given by
\begin{align}\label{eq:bc0}
\lim_{\vert x\vert\to\infty}\vert \psi(t,x)\vert=r_0.
\end{align}
The choice of the larger root guarantees the energetic stability of the constant solution; it constitutes a local minimum of the energy functional \eqref{energy}. 

The equation \eqref{eq:cq} appears in a great variety of physical problems.  It is a model in superfluidity
\cite{Ginsburg1958, Ginsburg1976}, descriptions of bosons \cite{Barashenkov} and  of defectons \cite{Pushakarov1978},
the theory of ferromagnetic and molecular chains \cite{Pushakarov1984, Pushakarov1986}, and in nuclear hydrodynamics \cite{Kartavenko}.  The popularity of this model can be explained by its simplicity combined with the fact that it captures an important phenomenology: the constituents of most fluids experience an attractive interaction at low densities and a repulsion at high densities.  The recent paper \cite{KOPV:35} focuses on the analogous problem with data decaying at infinity, which constitutes a model for the dynamics of a finite body of fluid; the model \eqref{eq:cq} describes the behavior of a localized disturbance in an infinite expanse of fluid that is otherwise quiescent.

By rescaling both space-time and the values of $\psi$, it suffices to consider the case $r_0^2=1$ and $\alpha_5=1$. This leaves one free parameter 
	\begin{equation}\label{def:gamma}
	\gamma:=1-r_1^2\in(0,1),
	\end{equation}
in terms of which equation \eqref{eq:cq} becomes
	\begin{equation}
	\label{eq:cq2}
	\left\{\begin{array}{ll}
	(i\partial_t+\Delta)\psi=(\vert \psi\vert^2-1)(\vert \psi\vert^2-1+\gamma)\psi,\\
	 \psi(0)=\psi_0,
	\end{array}\right.
	\end{equation}
with the boundary condition
	\begin{equation}
	\label{eq:bc}
	\lim_{\vert x\vert\to\infty} \psi(t,x)=1.
	\end{equation}

As discussed in \cite{Gerard} (albeit in the context of the Gross--Pitaevskii equation), finite energy functions obeying \eqref{eq:bc0} have a limiting phase as $|x|\to\infty$, which we can normalize to be zero, yielding \eqref{eq:bc}. Furthermore, the dynamics of \eqref{eq:cq} preserve the value of this phase, so that the boundary condition is independent of time, as well. This breaks the gauge invariance of \eqref{eq:cq} and prohibits using a phase factor to remove the linear term in this equation. The presence of the linear term leads to weaker dispersion at low frequencies, which presents a key challenge in understanding the long-time behavior of solutions.  

We are interested in perturbations of the constant solution $\psi\equiv 1$, and thus it is natural to introduce the function $u=u_1+iu_2$ defined via $\psi=1+u$. Using \eqref{eq:cq2}, we arrive at the following equation for $u$:
	\begin{equation}
	\label{eq:cq3}
	\left\{\begin{array}{ll}
	(i\partial_t+\Delta)u-2\gamma u_1=N(u),
	\\ u(0)=u_0,
	\end{array}\right.
	\end{equation}
where $N(u)=\sum_{j=2}^5 N_j(u)$, with
\begin{align*}
N_2(u)&=(3\gamma+4)u_1^2+\gamma u_2^2+2i\gamma u_1u_2, \\
N_3(u)&=(\gamma+8)u_1^3+(\gamma+4)u_1u_2^2+i[(\gamma+4)u_1^2u_2+\gamma u_2^3], \\
N_4(u)&=5u_1^4+6u_1^2u_2^2+u_2^4+i[4u_1^3u_2+4u_1u_2^3],\\
N_5(u)&= |u|^4u=u_1^5+2u_1^3u_2^2+u_2^4u_1+i[u_1^4u_2+2u_1^2u_2^3+u_2^5] .
\end{align*}
	
The Hamiltonian for \eqref{eq:cq2} is given by
\begin{align}\label{energy}
E(\psi)=\tfrac12\int_{\R^3}\vert\nabla\psi\vert^2\,dx+\tfrac{\gamma}{4}\int_{\R^3}(\vert\psi\vert^2-1)^2\,dx+\tfrac16\int_{\R^3}(\vert\psi\vert^2-1)^3\,dx.
\end{align}
Introducing the notation
	$$q(u):=\vert\psi\vert^2-1=2u_1+\vert u\vert^2,$$
we may write
	$$2\gamma u_1+ N(u)=[\gamma q(u)+q(u)^2](1+u)$$
and
\begin{equation}\label{eq:ham2}
E(1+u)=\tfrac12\int_{\R^3}\vert\nabla u \vert^2\,dx+\tfrac\gamma4\int_{\R^3}q(u)^2\,dx+\tfrac16\int_{\R^3}q(u)^3\,dx.
\end{equation}	
In the sequel we will write $E(u)$ for $E(1+u)$; when there is no risk of confusion we will simply write $q(u)=q$.  Note that $q$ represents density fluctuations relative to the constant background.  The quantity $\int q(t,x)\,dx$, which represents the matter/mass, is conserved in time; in this work we do not rely on this conservation law.

We define the energy space for \eqref{eq:cq3} to be
	\begin{equation}\label{eq:energyspace}
	\E:=\{u\in\dot{H}_x^1(\R^3):q(u)\in L_x^2(\R^3)\},
	\end{equation}
with associated metric
$$
[d_\E(u, v)]^2:= \|u-v\|_{\dot H^1_x}^2 + \|q(u)-q(v)\|_{L_x^2}^2,
$$
and we let $\|u\|_{\E}:=d_\E(u,0)$ denote the energy-norm.
	  
To justify our choice of energy space, we first note that functions with finite energy-norm have finite energy.  Indeed, using Sobolev embedding and the fact that $(L^3_x+L^6_x)\cap L^2_x\subset L^3_x$, it is not hard to see that if $u\in\E$ then $q(u)\in L_x^3$, and so $\vert E(u)\vert<\infty$. In fact,
$$
\vert E(u)\vert\lesssim \|u\|_{\E}^2+\|u\|_{\E}^3.
$$ 
On the other hand, in Lemma~\ref{lemma:coercive1} we will show that for $\gamma\in[\tfrac23,1)$, functions with finite energy have finite energy-norm. When $\gamma\in(0,\frac23)$, the energy is not coercive unless we impose an additional smallness assumption (see Lemma~\ref{lemma:coercive2}). 

When the energy is not coercive, there is no unique candidate for the name `energy space'.  The authors of \cite{KOPV} worked with the following notion of energy space:
$$
\E_{\text{KOPV}}:=\{u\in \dot{H}_x^1(\R^3)\cap L^4_x(\R^3): \Re u\in L^2_x(\R^3)\}.
$$
Note that $\E_{\text{KOPV}}\subset \E$.  In \cite{KOPV}, it was proved that \eqref{eq:cq3} is globally wellposed for data $u_0\in \E_{\text{KOPV}}$; in particular, solutions are unconditionally unique in $C(\R;\E_{\text{KOPV}})$.

In Section~\ref{section:gwp}, we prove global well-posedness and unconditional uniqueness for \eqref{eq:cq3} in the energy space $\E$ (see Theorem~\ref{thm:gwp}). As in \cite{KOPV, Matador, Zhang}, our approach is to regard the equation as a perturbation of the defocusing energy-critical NLS
\begin{equation}\label{E:gopher}
(i\partial_t + \Delta) u = |u|^4 u,
\end{equation}
which was proven to be globally wellposed, first in the radial case and then for general data in the celebrated papers \cite{Bourg,CKSTT:gwp}. Proving well-posedness for a Schr\"odinger equation in three dimensions that contains a quintic nonlinearity requires control over the $\dot{H}_x^1$-norm of the solution. As the energy \eqref{eq:ham2} is not necessarily coercive for $\gamma\in(0,\frac23)$, conservation of the Hamiltonian does not supply the requisite \emph{a priori} bound. To resolve this issue we require that both the energy and the kinetic energy of the data are small when $\gamma\in(0,\frac23).$

The stability of the equilibrium solution $\psi\equiv 1$ to \eqref{eq:cq2} is equivalent to the small-data problem for \eqref{eq:cq3}.  In this direction, there are two natural problems to consider, namely, the initial-value and the final-state problems for \eqref{eq:cq3}.  For the former, the question is whether small and localized initial data lead to solutions that are global and decay as $|t|\to \infty$.  For the latter, the question is whether one can construct a solution that scatters to a prescribed asymptotic state. In this paper we prove two results related to the final-state problem.  We will address the initial-value problem in a forthcoming work.

To fit \eqref{eq:cq3} into the standard framework of dispersive equations it is convenient to diagonalize the equation. Setting
$$
U=\vert\nabla\vert\jb^{-1}\ \text{ and }\ H=\vert\nabla\vert\jb,\ \text{ with }\ \jb:=\sqrt{2\gamma-\Delta}\ \text{ and }\ \vert\nabla\vert=(-\Delta)^{1/2},
$$
we arrive at the following equation for $v:=Vu:=u_1+iUu_2$:
	\begin{equation} \label{eq:cq v}
	\left\{\begin{array}{ll}
	(i\partial_t-H)v=N_v(u):=U\Re [N(u)]+i\Im [N(u)], \\
	v(0)=Vu_0.
	\end{array}\right.
	\end{equation}

Note that $\ulin(t):=V^{-1}e^{-itH}Vu_+$ solves the equation 
\begin{align}
\label{eq:lin}
(i\partial_t+\Delta)\ulin-2\gamma\Re \ulin=0 \qtq{with} \ulin(0)=u_+;
\end{align}
this is the linearization of \eqref{eq:cq3} about $u=0$. 

Our main result in this paper is the following theorem: 

\begin{theorem}\label{thm:wave ops1}
Suppose $\gamma\in[\tfrac23,1)$. For any $u_+\in \Hr$, there exists a global solution $u\in C(\R;\E)$ to \eqref{eq:cq3} such that
\begin{align}\label{E:T:H1}
\lim_{t\to\infty}\|u(t)- \ulin(t)\|_{\dot H^1_x}=0,
\end{align}
where $\ulin(t):=V^{-1}e^{-itH}Vu_+$.  Moreover, we have modified asymptotics in the energy space, in the sense that this same solution $u$ obeys
\begin{align}\label{E:T:E}
\lim_{t\to\infty}d_\E\bigl(u(t), \ulin(t) -\gamma \langle\nabla\rangle^{-2} |\ulin(t)|^2 \bigr)=0.
\end{align}
In the case $\gamma\in(0,\tfrac23)$, both conclusions still hold if additionally $\|u_+\|_{\Hr}$ is sufficiently small. 
\end{theorem}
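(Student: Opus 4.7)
The plan is to construct $u$ as a limit of solutions to \eqref{eq:cq3} posed backward from times $T_n\to\infty$ with prescribed final data. Let $v_\mathrm{app}(t):=\ulin(t)-\gamma\jb^{-2}|\ulin(t)|^2$ be the approximate profile appearing in \eqref{E:T:E}, and let $u^{(n)}$ be the global solution to \eqref{eq:cq3} supplied by Theorem~\ref{thm:gwp} with data $u^{(n)}(T_n)=v_\mathrm{app}(T_n)$. Dispersive decay of $\ulin$ ensures $v_\mathrm{app}(T_n)\in\E$ for $T_n$ large, with the kinetic-energy smallness required in the regime $\gamma\in(0,\tfrac23)$ also available.

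The heart of the argument is to show that $v_\mathrm{app}$ is a \emph{good} approximate solution on $[T,\infty)$: the residual produced by substituting $v_\mathrm{app}$ into \eqref{eq:cq3} must be small in a Strichartz norm compatible with the perturbation theory around the energy-critical equation \eqref{E:gopher}. A direct calculation identifies the residual as the sum of (i) the part of $N_2(\ulin)$ left uncancelled by $-\gamma\jb^{-2}|\ulin|^2$ after using the identity $(-\Delta+2\gamma)\jb^{-2}=\mathrm{Id}$, (ii) the cubic-and-higher contributions from $N_3,N_4,N_5$ evaluated at $v_\mathrm{app}$, and (iii) correction-term commutators of the form $\partial_t\jb^{-2}|\ulin|^2$ reduced via $(i\partial_t+\Delta)\ulin=2\gamma\Re\ulin$. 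The piece (i) is integrable in $t$ by the normal-form / stationary-phase analysis of Gustafson--Nakanishi--Tsai adapted to the present setting; (ii) and (iii) gain additional powers of $\ulin$, whose three-dimensional dispersive decay yields the requisite $L^1_t$ integrability.

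Once this residual estimate is in hand, the stability/perturbation theory underlying Theorem~\ref{thm:gwp} yields uniform-in-$n$ Strichartz bounds for $u^{(n)}$ on $[T,\infty)$ together with a Cauchy property: for $m<n$ one regards $u^{(n)}$ as an approximate solution to the equation satisfied by $u^{(m)}$, the initial discrepancy $\|u^{(n)}(T_m)-v_\mathrm{app}(T_m)\|_\E$ tending to zero. The limit is then extended to all of $\R$ by global well-posedness. The $\dot H^1$-scattering \eqref{E:T:H1} follows because $\||\nabla|\jb^{-2}|\ulin|^2\|_{L^2_x}\to 0$ as $t\to\infty$, while \eqref{E:T:E} is built into the construction once $q(v_\mathrm{app})$ is expanded in $L^2_x$ and the $O(|\ulin|^3)$ remainder is controlled. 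At the passage-to-the-limit stage, the new weak-$H^1$ continuity result advertised in the abstract appears to be the natural device for identifying the limit as a genuine solution of \eqref{eq:cq3}.

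The principal obstacle is reconciling the two regimes present in the problem: the energy-critical term $N_5=|u|^4u$ forces perturbation theory at the $\dot H^1$-critical scale, while the modified-scattering correction is governed by low-frequency, $L^2$-based analysis through $\jb^{-2}$. Choosing a Strichartz norm strong enough to close the energy-critical stability argument yet weak enough to accommodate the low-frequency smoothing used to tame $N_2$---and, relatedly, handling the operator-valued propagator $e^{-itH}$, which interpolates between Schr\"odinger-type decay at high frequencies and half-wave-type decay at low frequencies---is where I expect the main technical difficulty to lie.
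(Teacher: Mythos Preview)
Your plan attempts to prove Theorem~\ref{thm:wave ops1} by the direct approximate-solution/stability route that underlies Theorem~\ref{thm:wave ops2}. This does not close under the sole hypothesis $u_+\in\Hr$, and the paper's proof is organized quite differently for exactly this reason.

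The gap is in the claim that the residual of $v_{\mathrm{app}}$ lies in an $L^1_t$-type Strichartz norm on $[T,\infty)$. After the normal-form cancellation of $\gamma u_2^2$, the surviving quadratic pieces (schematically $U[(\Re\ulin)^2]$ and $\jb^{-2}\nabla\!\cdot[\Re\ulin\,\nabla\Im\ulin]$) are controlled pointwise in time only via $\Re\ulin\in L^\infty_t(L^2_x\cap L^6_x)$ and $\Im\ulin\in L^\infty_t L^6_x$; no $t$-decay is available without the extra weighted/decay assumptions of Theorem~\ref{thm:wave ops2} and Corollary~\ref{cor}. The ``stationary-phase analysis of GNT'' you invoke is \emph{not} a norm bound on the residual: in \cite{GNT:3d}, and here in Proposition~\ref{P:weak convergence}, it is used only after pairing against a fixed test function $e^{-isH}\varphi$ with $\hat\varphi$ supported away from the origin, so the time decay comes from the test function, not from $\ulin$. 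There is also a secondary issue: the stability theory (Proposition~\ref{P:Stab}) is inherently local in time because of the linear term $2\gamma\Re u$, so obtaining uniform Strichartz control on all of $[T,\infty)$ before scattering is known is circular.

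What the paper actually does: it takes solutions $u^{T_n}$ with data $R(e^{-iT_nH}Vu_+)$ at $t=T_n$, uses only the uniform energy bound \eqref{eq:uniform bounds} to extract a weak-$\dot H^1_x$ limit $u^\infty(t)$ via Theorem~\ref{T:weak} (so the weak-topology well-posedness is not an afterthought but the \emph{mechanism} that produces the candidate solution), and then proves $e^{itH}z^\infty(t)\rightharpoonup z_+$ weakly (Proposition~\ref{P:weak convergence}). The non-resonance is exploited there, but only at the level of the pairing $\langle N_z(u(s)),e^{-isH}\varphi\rangle$; an integration by parts in time handles the single troublesome cubic block $N_z^2$. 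Upgrading weak to strong convergence is done by an energy-decoupling argument (Lemma~\ref{L:energy1} together with \eqref{957}--\eqref{960}), which replaces a direct Radon--Riesz step; this is where the sign-indefinite $q^3$ term and the non-coercivity for $\gamma<\tfrac23$ actually enter, not in any Strichartz residual estimate.
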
	

\begin{remark}
The hypotheses on $u_+$ are not sufficient to guarantee that $\ulin(t)\in\E$ at any time $t$; correspondingly, one cannot hope to say that $u$ is close to $\ulin$ in the energy space.  Nonetheless, \eqref{E:T:H1} does show that the modification in \eqref{E:T:E} only plays a role at very low frequencies.  Indeed, simple computations show that the modification can be omitted, for example, when $u_+$ is a Schwartz function.   

We do not guarantee uniqueness of the solution $u$ in Theorem~\ref{thm:wave ops1}.  Later, we will show uniqueness within a restricted class of solutions $u$ for suitable scattering states $u_+$; see Theorem~\ref{thm:wave ops2} and Corollary~\ref{cor}  below.
\end{remark}

To give proper context to our work, we need to discuss prior work of Gustafson, Nakanishi, and Tsai \cite{GNT:dd, GNT:2d, GNT:3d} on the Gross--Pitaevskii equation
\begin{equation}\label{E:GP}
\begin{cases}
(i\partial_t+\Delta)\psi=(\vert\psi\vert^2-1)\psi \\
\psi(0)=\psi_0 \\
\lim_{\vert x\vert\to\infty} \psi(t,x) =1. 
\end{cases}
\end{equation}
Note that unlike in \eqref{eq:cq2}, the cubic nonlinearity here is defocusing.  Writing $\psi=1+u$, this equation preserves the energy
\begin{equation}\label{E:GPE}
E_{\text{GP}}(u)=\tfrac12\int_{\R^3}\vert\nabla u\vert^2\,dx+\tfrac14\int_{\R^3} q(u)^2\,dx.
\end{equation}
In contrast to \eqref{eq:ham2}, this energy density is lacking the sign-indefinite $q(u)^3$ term.  Correspondingly, the energy is coercive and the nonlinearity is energy-subcritical.  

The final-state problem for Gross--Pitaevskii was addressed by Gustafson, Nakanishi, and Tsai in \cite{GNT:2d, GNT:3d} in two and three dimensions and in \cite{GNT:dd} in higher dimensions.  They also considered the initial-value problem in dimensions $d\geq3$ in \cite{GNT:dd, GNT:3d}.

The jumping-off point for Theorem~\ref{thm:wave ops1} is an analogous result appearing in \cite{GNT:3d} for the Gross--Pitaevskii equation, which in turn builds on earlier work of Nakanishi \cite{Nak} on the (gauge-invariant) NLS.   Let us discuss a particular result from \cite{Nak}, the one that is most closely connected to the problem studied in this paper: Given $u_+ \in H_x^1(\R^3)$ and $\frac23<p<\frac43$, there is a solution to
\begin{align}\label{GONLS}
(i \partial_t  +\Delta) u = |u|^p u
\end{align}
that obeys $e^{-it\Delta} u(t) \to u_+$ in $H_x^1(\R^3)$.   To prove this, Nakanishi first defines solutions $u^T$ to \eqref{GONLS} with $u^T(T) = e^{iT\Delta} u_+$.  As the problem is mass-subcritical, these solutions are easily seen to be global with uniformly bounded $H^1_x$-norm (even in the focusing case).

By writing \eqref{GONLS} in Duhamel form and exploiting the dispersive estimate \eqref{NLS disp}, it is not difficult to show that for each $\phi\in C^\infty_c(\R^3)$, the collection of functions
\begin{equation}\label{E:equiF}
\bigl\{ t \mapsto \langle \phi, e^{-it\Delta} u^T(t)\rangle  : T\in \R\bigr\}
\end{equation}
forms an equicontinuous family on a compactification $\R\cup\{\pm\infty\}$ of the real line.  In particular, each such function has limiting values as $t\to\pm\infty$.  Applying Arzel\`a--Ascoli and the Cantor diagonal argument ($H_x^1$ is separable), we may find a sequence $T_n\to\infty$ and a function $u^\infty\in L^\infty_t H^1_x$ so that
$$
e^{-it\Delta} u^{T_n}(t) \rightharpoonup e^{-it\Delta} u^\infty(t) \quad\text{weakly in $H^1_x$ for each $t\in \R$.}
$$   
This construction guarantees that $u^\infty$ has two further properties.  First, the function $u^\infty:\R\to H^1_x$ is weakly continuous on $\R\cup\{\pm\infty\}$, that is, when $H_x^1$ is endowed with the weak topology.  Secondly, for any $\phi\in C^\infty_c(\R^3)$,
\begin{equation*}
\langle \phi,  e^{-it\Delta}  u^{T_n}(t)\rangle \to \langle \phi, e^{-it\Delta}  u^\infty(t)\rangle \quad\text{as $n\to\infty$, uniformly in $t\in\R$.}
\end{equation*}
Using these properties it is elementary to verify that $e^{-it\Delta} u^\infty(t) \rightharpoonup u_+$ as $t\to\infty$.  This leaves two obligations: first one must show that $u^\infty$ is actually a solution to \eqref{GONLS} and secondly, one must upgrade weak convergence to norm convergence.

Due to the $H^1_x$-subcriticality of the nonlinearity, the Rellich--Kondrashov theorem allows one to show that $u^\infty$ is a weak solution to \eqref{GONLS}.  For this problem, weak solutions with values in $H^1_x$ are necessarily strong solutions and so we may conclude that $u^\infty$ is a solution to \eqref{GONLS}.

Lastly, to upgrade weak convergence to strong convergence, one exploits conservation of mass and energy and the Radon--Riesz Theorem.  For example, one may argue as follows:  The quantity
\begin{align}
F(u) := \int_{\R^3} |\nabla u|^2 + \tfrac{2}{p+2} |u|^{p+2} + |u|^2\,dx 
\end{align}
is conserved under the flow \eqref{GONLS}.  Exploiting this, dispersion of the linear flow, and weak lower-semicontinuity of norms, we deduce that
$$
\varlimsup_{t\to\infty} \| e^{-it\Delta} u^\infty(t) \|_{H_x^1}^2 \leq F(u^\infty) \leq \varliminf_{n\to\infty} F(u^{T_n}(0)) = \varliminf_{n\to\infty} F(u^{T_n}(T_n)) = \| u_+ \|_{H^1_x}^2.
$$
Given that $e^{-it\Delta} u^\infty(t) \rightharpoonup  u_+$, we deduce that $e^{-it\Delta} u^\infty(t) \to  u_+$ in $H^1_x$.

In order to adapt this beautiful argument to the Gross--Pitaevskii setting, the authors of \cite{GNT:3d} had to overcome two significant obstacles: First, one needs to make the (conserved) energy \eqref{E:GPE} associated to \eqref{E:GP} play the role of $F$ in the above.  It is far from obvious that this has the requisite convexity.  Secondly, the simple arguments used to prove equicontinuity of the family \eqref{E:equiF} no longer work.  This failure stems from  lower-power terms in the nonlinearity combined with the fact that energy conservation gives poor \emph{a priori} spatial decay of solutions; while it guarantees $q(u)\in L^2_x$, it only yields $u_1\in L_x^3$ and no better than $u_2\in L_x^6$.  This is not sufficient decay to allow direct access to any of the integrable-in-time dispersive estimates obeyed by the propagator.

The key to obtaining equicontinuity of the analogue of the family \eqref{E:equiF} in the Gross--Pitaevskii setting is to exploit certain non-resonant structures in the nonlinearity that allow one to integrate by parts in time.  In implementing this approach, one sees that it is necessary to exhibit such non-resonance in both the quadratic and cubic terms of the nonlinearity.  Such a brute force attack is rather messy.  The burden can be significantly reduced by using test functions whose Fourier support excludes the origin.  We will demonstrate this (primarily expository) improvement over the arguments from \cite{GNT:3d} in the proof of Proposition~\ref{P:weak convergence}.  One particular virtue of this approach is that it makes clear from the start that the argument is inherently completely immune to the poor dispersion manifested by the propagator \eqref{E:matrix prop} at low frequencies.

In \cite{GNT:3d}, Gustafson, Nakanishi, and Tsai exploit the quadratic non-resonant structure in a more elegant way through the use of a normal form transformation
\begin{equation}\label{GNT_NF}
z = \bigl[u_1 + (2-\Delta)^{-1} |u|^2\bigr] + i \sqrt{-\Delta/(2-\Delta)} \,u_2  .
\end{equation}
In this work they also observe (and then utilize) the further non-resonant structure at the cubic level (akin to \eqref{404}).  There is some flexibility in the choice of normal form that witnesses the requisite non-resonance; however, the particular one employed in \cite{GNT:3d} has the dramatic additional benefit of overcoming the first obstacle described above.  The necessary convexity of the energy functional becomes clearer when written in their new variables: With $u$ and $z$ related by \eqref{GNT_NF},
\begin{equation}\label{E:GPEident}
E_{\text{GP}} (u) = \tfrac12 \bigl\| \sqrt{2-\Delta}\, z\|_{L^2_x}^2 + \tfrac14 \bigl\| \sqrt{-\Delta/(2-\Delta)}\, |u|^2 \|_{L^2_x}^2 .
\end{equation}
The virtue of this identity is best understood in the context of \eqref{957}.  Because the right-most term in \eqref{E:GPEident} is non-negative, combining \eqref{E:GPEident} with \eqref{957} yields
$$
\varlimsup_{t\to\infty} \tfrac12 \bigl\| \sqrt{2-\Delta}\, z(t) \|_{L^2_x}^2 = \tfrac12 \bigl\| \sqrt{2-\Delta}\, z_+ \|_{L^2_x}^2,
$$
where $z(t)$ and $z_+$ represent a particular solution and its putative scattering state, both in terms of the normal form variable.  This is just what is needed as input for the Radon--Riesz Theorem.

In order to prove Theorem~\ref{thm:wave ops1} we will need to capitalize on all the ideas introduced in \cite{GNT:3d} to prove the analogous result for the Gross--Pitaevskii equation.  In particular, we will exploit a normal form transformation modeled closely on \eqref{GNT_NF}, namely,
\begin{equation}\label{1st M}
z = M(u) := \bigl[u_1 + \gamma (2\gamma-\Delta)^{-1} |u|^2\bigr] + i \sqrt{-\Delta/(2\gamma-\Delta)} \, u_2 .
\end{equation}

However, several new difficulties arise above and beyond those overcome in \cite{GNT:3d}.  The first group of new difficulties is associated to the presence of energy-critical terms in the nonlinearity.  This already manifests in the proof that \eqref{eq:cq3} admits global solutions for initial data in the energy space $\E$, as discussed earlier in this introduction.

In the Nakanishi argument described above, it was used that weak limits (in the $H^1_x$ topology pointwise in time) of strong solutions to \eqref{GONLS} are themselves strong solutions.  In the subcritical case, one sees relatively easily that such limits are weak solutions (via Rellich--Kondrashov) and can then exploit earlier work (cf. \cite[Ch. 3--4]{Cazenave}) showing that weak solutions are strong solutions.  In particular, solutions converging weakly to zero (in $H^1_x$) by concentrating will actually converge to zero in the space-time norms used to construct such solutions.  In a similar way, we see that increasingly concentrated parts of a solution (which will drop out under taking a weak limit) do not affect parts of the solution living at unit scale.

In the critical case, the norm is invariant under scaling.  Correspondingly, highly concentrated parts of a solution may have large norm and so, naively at least, have a non-trivial effect on the remainder of the solution.  Thus, it is not clear that weak limits of solutions should even be weak solutions!  The key to escaping this nightmare is to show that two parts of a solution have little effect on one another if they live at widely separated scales.  Naturally, even decomposing a solution into its constituent parts is a highly non-trivial business; however, such technology has been rather extensively developed due to its role in the induction on energy paradigm, which already underlies the proof of the global well-posedness of \eqref{E:gopher} in \cite{CKSTT:gwp}.

Before tackling \eqref{eq:cq3}, a reasonable first step is to prove that weak limits of solutions are themselves solutions in the case of the equation \eqref{E:gopher}.  This is a very natural problem, which does not seem to have been studied before.  As it turns out, it is possible to verify this statement with relatively little labour by harnessing the full power of the concentration compactness ideas already developed in that setting.  Specifically, one starts with a nonlinear profile decomposition and then further exploits some of the decoupling ideas used in its proof.  In this paper, we will implement this strategy in the setting of \eqref{eq:cq3}; this is ample guidance for anyone seeking to reconstruct our argument for \eqref{E:gopher}.

We begin by developing a linear profile decomposition adapted to \eqref{eq:cq3}; see Proposition~\ref{P:LPD}.  Despite the fact that the linear equation underlying \eqref{eq:cq3} differs from that underlying \eqref{E:gopher}, we are able to adapt the profile decomposition for the linear Schr\"odinger equation to our setting, rather than proceeding \emph{ab initio}.  To develop a nonlinear profile decomposition, we need to construct solutions to \eqref{eq:cq3} associated to each linear profile.  For profiles living at unit scale, existence of these solutions (and all requisite bounds) follow from Theorem~\ref{thm:gwp}.  Profiles whose characteristic length scale diverges can be approximated by linear solutions on bounded time intervals and so require no special attention.  However, highly concentrated profiles require independent treatment; this is the content of Propostion~\ref{P:embedding}.  There are two subtle points here: (i) Such profiles are merely $\dot H^1_x$ and so do not have finite energy. (ii) The characteristic time scale associated to such profiles is very short; thus, understanding such solutions even on a bounded interval essentially requires an understanding of their infinite time behavior.

The nonlinear profile decomposition posits that the nonlinear evolution of the initial data can be approximated by the sum of the nonlinear evolutions of its constituent profiles.  This is verified by demonstrating decoupling of the profiles inside the nonlinearity (see Lemma~\ref{L:approx}) and exploiting a suitable stability theory for our equation (see Proposition~\ref{P:Stab}).  The latter requires certain \emph{a priori} bounds, which are shown to hold in Lemma~\ref{L:STB}.  Once it is known that the nonlinear profile decomposition faithfully represents the true solution, it is relatively elementary to complete the proof of Theorem~\ref{T:weak}.

This completes our discussion of the new difficulties (relative to \cite{GNT:3d}) associated to the presence of energy-critical nonlinear terms.  The second main group of difficulties stems from the shape of the energy functional.  The lack of coercivity when $\gamma\in (0,\frac23)$ was discussed already as an obstacle to proving global well-posedness.

As also discussed above, convexity of the energy functional plays a key role in upgrading weak convergence to strong convergence, via an argument of Radon--Riesz type.
The analogue of \eqref{E:GPEident} for our equation is as follows:  For $z=M(u)$ as in \eqref{1st M},
\begin{equation}\label{E:CQEident}
E(u) = \tfrac12 \| \jb z \|_{L^2_x}^2 + \tfrac\gamma4 \| U |u|^2 \|_{L^2_x}^2 + \int \tfrac16 q(u)^3\,dx  .
\end{equation}
Unlike its analogue \eqref{E:GPEident}, this does not yield an inequality between the energy and the $H^1_x$-norm of $z$.  Indeed, the leading order correction is the sign indefinite term $\frac43 \int (u_1)^3\,dx$.  Correspondingly, we will need to be concerned with the structure of our solution $u^\infty(t)$ as $t\to\infty$ to ensure that it does not contain surplus energy beyond that needed for its (putative) scattering state.  Recall that $u^\infty(t)$ is merely constructed as a weak limit of solutions $u^{T_n}(t)$ defined by their values at $t=T_n$, which gives very little \emph{a priori} information on its structure.  The resolution of this dilemma is to prove a form of energy decoupling between the part of the solution matching the scattering state and any residual part (cf. Lemma~\ref{L:energy1}).   Ultimately, this energy decoupling shows that any residual part of the solution must converge to zero in norm, which then obviates any explicit implementation of the Radon--Riesz-style argument.

We turn now to our second result in this paper, which guarantees a degree of uniqueness of the nonlinear solution with prescribed scattering state.  Specifically, for scattering states with good linear decay, we can guarantee that there is only one nonlinear solution scattering to it with comparable decay.  The decay of such solutions will be measured in the following norm:
$$
\|u\|_{X_T}:=\sup_{t\geq T} \, t^{\frac12}\|u(t)\|_{H_x^{1,3}(\R^3)}.
$$

\begin{theorem}\label{thm:wave ops2}
Fix $\gamma\in(0,1)$. There exists $\eta>0$ so that if $u_+\in \Hr$ satisfies 
		\begin{equation}
		\label{eq:scattering smallness condition}
		\|V^{-1}e^{-itH}Vu_+\|_{X_1} \leq \eta,
		\end{equation}
then there exists a global solution $u\in C(\R; \E)$ to \eqref{eq:cq3} such that
		\begin{equation}
		\label{eq:scattering1}
		\lim_{t\to\infty}\|u(t)-V^{-1}e^{-itH}Vu_+\|_{\Hr}=0.
		\end{equation}
Moreover $u$ is unique in the class of solutions with $\|u\|_{X_T} \leq 4\eta$ for some $T\geq 1$.
\end{theorem}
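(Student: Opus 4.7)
My strategy is to construct $u$ as the fixed point of a contraction on a small $X_T$-ball using the Duhamel formulation anchored at $t = +\infty$, and then upgrade the decay to $\Hr$-convergence.

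\emph{Setup.} Writing the diagonalized equation \eqref{eq:cq v} in Duhamel form anchored at infinity, the desired solution $u$ corresponds to the fixed point of
$$
\Phi(u)(t) := \ulin(t) + i \int_t^\infty V^{-1} e^{-i(t-s) H} N_v(u(s))\,ds,
$$
which I seek on the closed metric ball $B := \{u \in C([T, \infty); H^{1,3}_x) : \|u\|_{X_T} \leq 4\eta\}$ equipped with the $X_T$-distance. The choice $T = 1$ will suffice.

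\emph{Linear and nonlinear estimates.} The core analytic input is a dispersive bound of the form
$$
\|V^{-1} e^{-itH} V f\|_{H^{1,3}_x} \lesssim |t|^{-1/2} \|f\|_{H^{1,3/2}_x},
$$
which follows from stationary-phase applied to the symbol $|\xi| \sqrt{|\xi|^2 + 2\gamma}$ of $H$ combined with the boundedness of $V^{\pm 1}$ on $H^{1,p}_x$ for $p \in (1, \infty)$. For the nonlinearity, the product rule, H\"older's inequality, and the Sobolev embedding $W^{1,3}_x(\R^3) \hookrightarrow L^p_x$ valid for every $p < \infty$ give
$$
\|N_k(u(s))\|_{H^{1,3/2}_x} \lesssim \|u(s)\|_{H^{1,3}_x}^k \lesssim s^{-k/2} \|u\|_{X_T}^k, \qquad k = 2, 3, 4, 5.
$$
A direct computation shows $\int_t^\infty (s-t)^{-1/2} s^{-k/2}\, ds \lesssim t^{(1-k)/2}$ for $t \geq 1$ and $k \geq 2$, so
$$
t^{1/2} \|\Phi(u)(t) - \ulin(t)\|_{H^{1,3}_x} \lesssim \sum_{k=2}^5 t^{(2-k)/2} \|u\|_{X_T}^k \lesssim \|u\|_{X_T}^2 \lesssim \eta^2,
$$
uniformly in $t \geq 1$. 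Combined with $\|\ulin\|_{X_1} \leq \eta$, this yields $\|\Phi(u)\|_{X_1} \leq \eta + C\eta^2 \leq 4\eta$ for $\eta$ small enough. A parallel difference estimate shows $\|\Phi(u) - \Phi(w)\|_{X_1} \leq \tfrac12 \|u - w\|_{X_1}$, so $\Phi$ is a strict contraction on $B$ and admits a unique fixed point $u$.

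\emph{$\Hr$-convergence and uniqueness.} The map $V^{-1} e^{-itH} V$ acts isometrically on $\Hr$ (since $e^{-itH}$ is unitary on $L^2_x$ and $V$ is, essentially by construction, an isometric identification of $\Hr$ with $L^2_x$). Applying $\|\cdot\|_{\Hr}$ to the Duhamel tail and combining the $X_T$-decay of $u$ with Strichartz-type estimates adapted to the diagonalized equation (of the kind already deployed in the proof of Theorem~\ref{thm:gwp}) shows $\|u(t) - \ulin(t)\|_{\Hr} \to 0$, giving \eqref{eq:scattering1}. For uniqueness, if $\tilde u$ is any solution satisfying \eqref{eq:scattering1} together with $\|\tilde u\|_{X_{T_0}} \leq 4\eta$ for some $T_0 \geq 1$, then $\tilde u$ automatically obeys the same Duhamel identity on $[\max(T_0, T), \infty)$ and so lies in the contraction ball there; fixed-point uniqueness forces $\tilde u = u$ on this interval, and agreement on all of $\R$ follows from the unconditional uniqueness in Theorem~\ref{thm:gwp}.

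\emph{Main obstacle.} The quadratic term $N_2$ is the critical case: its Duhamel contribution decays at exactly the rate $t^{-1/2}$ of $\ulin$ itself, so the contraction cannot be closed by enlarging $T$ — all smallness must come from $\eta$. This is precisely why the smallness threshold in \eqref{eq:scattering smallness condition} is a single absolute constant. A secondary technical nuance is the weak low-frequency dispersion of $e^{-itH}$, inherited from the Klein--Gordon-like behavior of the symbol near $\xi = 0$; this is sidestepped by working with the $H^{1,3} \to H^{1,3/2}$ estimate, which avoids the problematic $L^2$-endpoint.
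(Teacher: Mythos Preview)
Your argument has a genuine gap at the dispersive step. The claimed estimate
\[
\|V^{-1}e^{-itH}Vf\|_{H^{1,3}_x}\lesssim |t|^{-1/2}\|f\|_{H^{1,3/2}_x}
\]
is false, because $V^{-1}$ is \emph{not} bounded on $H^{1,p}_x$: it acts as $U^{-1}=\langle\nabla\rangle|\nabla|^{-1}$ on the imaginary part, which is singular at zero frequency. Concretely, from the matrix representation \eqref{E:matrix prop} the $(2,1)$ entry is $-U^{-1}\sin(tH)$, and for $f_1$ supported at frequency $N\ll 1$ one has $U^{-1}\sin(tH)f_1\sim 2\gamma t\, f_1$ rather than something decaying in $t$. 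So the ``secondary technical nuance'' you flag at the end is in fact the main obstruction, and it is not sidestepped by working with $L^3$--$L^{3/2}$.

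This low-frequency loss is exactly why the paper does \emph{not} run the contraction on the $v$-equation \eqref{eq:cq v}. The Duhamel term for $u$ involves $V^{-1}e^{-i(t-s)H}N_v(u)$, and after commuting $U^{-1}$ through one needs control of $U^{-1}\Im N(u)$; but $\Im N_2(u)=2\gamma u_1u_2$ carries no compensating factor of $|\nabla|$, so $\|U^{-5/6}(u_1u_2)\|_{L^{3/2}_x}$ is generically infinite. The paper's remedy is the normal form $z=M(u)=Vu+\gamma\langle\nabla\rangle^{-2}|u|^2$: the resulting nonlinearity $N_z(u)$ has the property that $U^{-1}N_z(u)$ consists entirely of terms estimable by $\|u\|_{H^{1,3}_x}^k$ (see \eqref{eq:z nonlinearity}), precisely because the normal form has removed the $u_2^2$ quadratic term and restructured the $u_1u_2$ term into $\nabla\langle\nabla\rangle^{-2}\cdot[u_1\nabla u_2]$. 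The contraction is then run on the \emph{pair} $(u,z)$, with one component inverting the normal form and the other solving the Duhamel equation for $z$. Without this, the quadratic step of your contraction does not close.
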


\begin{remark}
The proof of this theorem gives a quantitative rate in \eqref{eq:scattering1}, namely,
\begin{equation}\label{E:scattering11}
		\|u(t)-V^{-1}e^{-itH}Vu_+\|_{\Hr} \lesssim t^{-1/4}.
\end{equation}
\end{remark}

\begin{remark} Writing $\ulin(t)=V^{-1}e^{-itH}Vu_+$, we note that $u_+\in \Hr$ and $\|\ulin\|_{X_1}<\infty$ guarantee that $\ulin$ is uniformly bounded in the energy space $\E$ for $t\geq 1$. 
\end{remark}

Finally, we observe that we can guarantee the smallness condition \eqref{eq:scattering smallness condition} by assuming control over weighted norms.

\begin{corollary}\label{cor} Let $\gamma\in(0,1)$ and $u_+\in \Hr $. If	
	$$\|\langle x\rangle^{\frac12+}\langle\nabla\rangle u_+\|_{L_x^2}
	+\|\langle x\rangle^{\frac43+}\langle\nabla\rangle^{\frac56}\Re u_+\|_{L_x^2}$$
is sufficiently small, then there exists a global solution $u\in C(\R;\E)$ to \eqref{eq:cq3} such that \eqref{eq:scattering1} holds.
\end{corollary}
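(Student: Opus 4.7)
The strategy is to deduce Corollary~\ref{cor} from Theorem~\ref{thm:wave ops2} by showing that smallness of the two stated weighted norms implies the smallness hypothesis~\eqref{eq:scattering smallness condition} on $\ulin=V^{-1}e^{-itH}Vu_+$.  Once this implication is established, Theorem~\ref{thm:wave ops2} directly produces the scattering solution $u\in C(\R;\E)$.

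The first step is to unwind the action of $V$ and $e^{-itH}$.  Using $V(a+ib)=a+iUb$ with $U=\grad{}\langle\nabla\rangle^{-1}$ and $e^{-itH}=\cos(tH)-i\sin(tH)$, writing $u_{+,j}$ for the real and imaginary parts of $u_+$, a short calculation gives
\begin{align*}
\ulin(t)=\bigl[\cos(tH)u_{+,1}+\sin(tH)Uu_{+,2}\bigr]+i\bigl[\cos(tH)u_{+,2}-U^{-1}\sin(tH)u_{+,1}\bigr].
\end{align*}
Since $\langle\nabla\rangle$ commutes with every operator appearing, controlling $\|\ulin\|_{X_1}=\sup_{t\geq 1}t^{1/2}\|\ulin(t)\|_{H^{1,3}_x}$ reduces to controlling $t^{1/2}\|\cdot\|_{L^3_x}$ of the four terms above after an additional $\langle\nabla\rangle$, which is absorbed into the data.

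The second step is to apply the dispersive properties of $e^{-itH}$, whose symbol $|\xi|\langle\xi\rangle$ is Schr\"odinger-like at high frequency and half-wave-like at low frequency.  For the three terms containing no $U^{-1}$ factor, a standard $L^{3/2}_x\to L^3_x$ dispersive bound for $e^{-itH}$ produces the decay $t^{-1/2}$ (the operator $U$, being a bounded Fourier multiplier, does no harm).  H\"older's inequality
$$
\|f\|_{L^{3/2}_x}\le\|\langle x\rangle^{-s}\|_{L^6_x}\|\langle x\rangle^{s}f\|_{L^2_x}\lesssim\|\langle x\rangle^{1/2+}f\|_{L^2_x}\qquad(s>\tfrac12)
$$
then bounds these three contributions by $\|\langle x\rangle^{1/2+}\langle\nabla\rangle u_+\|_{L^2_x}$, i.e. by the first weighted norm.

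The delicate term is $U^{-1}\sin(tH)u_{+,1}$, whose symbol $\langle\xi\rangle|\xi|^{-1}\sin(t|\xi|\langle\xi\rangle)$ carries a $|\xi|^{-1}$ singularity at the origin.  Split by a Littlewood--Paley cutoff: on high frequencies $U^{-1}$ is bounded, so the $L^{3/2}_x\to L^3_x$ estimate of the previous step suffices.  On low frequencies $H\sim\sqrt{2\gamma}\grad{}$ and the operator reduces essentially to $\grad{-1}\sin(t\sqrt{2\gamma}\grad{})\PLo$, a 3D half-wave propagator weighted by a Riesz potential.  The corresponding wave-type dispersive estimate in $\R^3$ yields $t^{-1/2}$ decay in $L^3_x$ at the cost of a derivative loss of $5/6$ and a smaller exponent $p'$ on the data side; H\"older with a stronger weight $\langle x\rangle^{-s}\in L^r_x$ satisfying $1/p'=1/2+1/r$ and $rs>3$ forces $s>4/3$ and matches the second weighted norm exactly.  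Summing the four contributions gives
$$
\|\ulin\|_{X_1}\lesssim\|\langle x\rangle^{1/2+}\langle\nabla\rangle u_+\|_{L^2_x}+\|\langle x\rangle^{4/3+}\langle\nabla\rangle^{5/6}\Re u_+\|_{L^2_x},
$$
so the hypothesized smallness yields $\|\ulin\|_{X_1}\le\eta$, and Theorem~\ref{thm:wave ops2} completes the proof.  I expect the main obstacle to be the careful verification of the low-frequency wave-type dispersive estimate for $U^{-1}\sin(tH)\PLo$ with precisely the derivative loss $5/6$ and the Lebesgue exponent $p'$ needed to reproduce the weight $\langle x\rangle^{4/3+}$ appearing in the statement; this is where the mixed Schr\"odinger/half-wave character of the propagator must be exploited quantitatively.
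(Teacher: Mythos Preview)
Your overall strategy is correct and matches the paper's: reduce to Theorem~\ref{thm:wave ops2} by showing the weighted norms control $\|\ulin\|_{X_1}$, decompose $\ulin$ via \eqref{E:matrix prop}, and single out $U^{-1}\sin(tH)\Re u_+$ as the delicate term. The treatment of the three benign terms via the $L^{3/2}\to L^3$ dispersive bound and H\"older with weight $\langle x\rangle^{1/2+}$ is also exactly what the paper does.

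The gap is in your low-frequency analysis of $U^{-1}\sin(tH)\Re u_+$. Approximating $H$ by $\sqrt{2\gamma}\,|\nabla|$ and invoking a half-wave dispersive estimate does not produce $t^{-1/2}$ decay in $L^3_x$: the pure half-wave $e^{\pm it|\nabla|}$ in $\R^3$ gives only $t^{-(n-1)(1/2-1/q)}=t^{-1/3}$ for $L^{3/2}\to L^3$, and the extra $|\nabla|^{-1}$ smoothing does not repair this deficit (it helps at high frequencies, not at the low frequencies where you need it). Moreover, the phase error $t\bigl(H(\xi)-\sqrt{2\gamma}\,|\xi|\bigr)=O(t|\xi|^3)$ is not uniformly small for $t\to\infty$, so the reduction itself is not justified. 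The propagator $e^{-itH}$ is genuinely better than the half-wave at low frequency --- as noted before Proposition~\ref{lemma:dispersive}, the reduced radial curvature of $|\xi|\langle\xi\rangle$ is more than compensated by increased angular curvature --- and this is exactly what the factor $U^{1/2-1/r}$ in \eqref{eq:dispersive} encodes.

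The paper's route avoids any frequency splitting: apply \eqref{eq:dispersive} with $r=3$ directly to $e^{\pm itH}U^{-1}\langle\nabla\rangle\Re u_+$, so that the built-in $U^{1/6}$ gain partially cancels $U^{-1}$ and leaves $\|U^{-5/6}\langle\nabla\rangle\Re u_+\|_{L^{3/2}_x}$ on the right. Writing $U^{-5/6}=|\nabla|^{-5/6}\langle\nabla\rangle^{5/6}$, Hardy--Littlewood--Sobolev trades $|\nabla|^{-5/6}$ for passage from $L^{3/2}$ to $L^{18/17}$, and then H\"older with $\langle x\rangle^{-s}\in L^{9/4}_x$ (hence $s>4/3$) reproduces precisely the second weighted norm. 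Thus the exponents $5/6$ and $4/3+$ you correctly identified originate from the $U^{1/6}$ gain specific to $e^{-itH}$ combined with Sobolev embedding, not from wave-equation numerology.
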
	

We prove Theorem~\ref{thm:wave ops2} and Corollary~\ref{cor} in Section~\ref{section:fs2}. The proof, which relies primarily on dispersive and Strichartz estimates, consists of a contraction mapping argument that simultaneously solves the requisite PDE for $z=M(u)$ and inverts the normal form transformation.  The argument differs little from that used to prove Theorem~1.1 in \cite{GNT:2d}.

The rest of the paper is organized as follows. In Section~\ref{section:notation} we set some notation and collect some useful lemmas. In Section~\ref{section:gwp} we prove global well-posedness and unconditional uniqueness in the energy space for \eqref{eq:cq3}, as well as a stability result. In Section~\ref{section:weak} we prove a well-posedness result in the weak topology, namely, Theorem~\ref{T:weak}. In Section~\ref{section:normal form} we discuss the normal form transformation \eqref{1st M}. In Section~\ref{section:fs1} we prove Theorem~\ref{thm:wave ops1}, and finally in Section~\ref{section:fs2} we prove Theorem~\ref{thm:wave ops2} and Corollary~\ref{cor}.

\subsection*{Acknowledgements} R.~K. was supported by NSF grant DMS-1265868. J.~M. was supported by DMS-1400706. M.~V. was supported by NSF grant DMS-1161396.  We are indebted to the Hausdorff Institute of Mathematics, which hosted us during our work on this project.

					\section{Notation and useful lemmas}\label{section:notation}
\subsection{Some notation} 					
We write $A\lesssim B$ or $A= O(B)$ to indicate that $A\leq CB$ for some constant $C>0$. Dependence of implicit constants on various parameters will be indicated with subscripts. For example $A\lesssim_\varphi B$ means that $A\leq CB$ for some $C=C(\varphi)$. The dependence of implicit constants on the parameter $\gamma$ defined in \eqref{def:gamma} will not be explicitly indicated. We write $A\sim B$ if $A\lesssim B$ and $B\lesssim A$. We write $A\ll B$ if $A\leq cB$ for some small $c>0$. 

We write a complex-valued function $u$ as $u=u_1+iu_2$. When $X$ is a monomial, we use the notation $\text{\O}(X)$ to denote a finite linear combination of products of the factors of $X$, where Mikhlin multipliers (e.g. Littlewood--Paley projections) and/or complex conjugation may be additionally applied in each factor.  We extend $\text{\O}$ to polynomials via $\text{\O}(X+Y)=\text{\O}(X)+\text{\O}(Y)$.

For a time interval $I$ we write $L_t^q L_x^r(I\times\R^3)$ for the Banach space of functions $u:I\times\R^3\to\C$ equipped with the norm
	$$\|u\|_{L_t^qL_x^r(I\times\R^3)}=\left(\int_I \|u(t)\|_{L_x^r(\R^3)}^q\,dt\right)^{1/q},$$
with the usual adjustments when $q$ or $r$ is infinity. If $q=r$ we write $L_t^qL_x^q=L_{t,x}^q$. We often abbreviate $\|u\|_{L_t^qL_x^r(I\times\R^3)}=\|u\|_{L_t^qL_x^r}$ and $\|u\|_{L_x^r(\R^3)}=\|u\|_{L_x^r}.$ We also write $C(I;X)$ to denote the space of continuous functions on $I$ taking values in $X$. 

We use the following convention for the Fourier transform on $\R^3$:
$$
\wh{f}(\xi)=(2\pi)^{-3/2}\int_{\R^3} e^{-ix\cdot\xi} f(x)\,dx \qtq{so that} f(x)=(2\pi)^{-3/2}\int_{\R^3} e^{ix\cdot\xi} \hat f(\xi)\,d\xi.
$$

The fractional differential operator $\vert\nabla\vert^s$ is defined by $\wh{\vert\nabla\vert^s f}(\xi)=\vert\xi\vert^s\wh{f}(\xi).$ We will also make use of the following Fourier multiplier operators (and powers thereof):
	\begin{align*}
	\langle\xi\rangle=\sqrt{2\gamma+\vert\xi\vert^2},\quad &\jb=\sqrt{2\gamma-\Delta},
	\\
	U(\xi)=\sqrt{\vert\xi\vert^2(2\gamma+\vert\xi\vert^2)^{-1}},\quad & U=\sqrt{(-\Delta)(2\gamma-\Delta)^{-1}},
	\\
	H(\xi)=\sqrt{\vert\xi\vert^2(2\gamma+\vert\xi\vert^2)},\quad& H=\sqrt{(-\Delta)(2\gamma-\Delta)}.
	\end{align*}

Fix $\gamma\in(0,1)$ as in \eqref{def:gamma}. We define homogeneous and inhomogeneous Sobolev norms $\dot H^{s,r}_x$ and $H_x^{s,r}$ as the completion of Schwartz functions under the norms
$$
\|f\|_{\dot H^{s,r}_x}:= \|(-\Delta)^{s/2} f\|_{L^r_x} \quad\text{and}\quad 
 \|f\|_{H^{s,r}_x}:= \|(2\gamma-\Delta)^{s/2} f\|_{L^r_x},
$$
respectively.  When $r=2$ we abbreviate $\dot H^{s,2}_x=\dot H^s_x$ and $H^{s,2}_x=H^s_x$. Note that this definition of the $H_x^{s}$-norm is equivalent (up to constants depending on $\gamma$) to the standard one, which uses the operator $(1-\Delta)^{s/2}$. 
	
\subsection{Basic harmonic analysis} We employ the standard Littlewood--Paley theory. Let $\phi$ be a radial bump function supported in $\{\vert\xi\vert\leq\tfrac{11}{10}\}$ and equal to one on the unit ball. For $N\in2^{\mathbb{Z}}$ we define the Littlewood--Paley projections 
\begin{align*}
\wh{P_{\leq N}u}(\xi)=\phi(\tfrac{\xi}{N})\wh{u}(\xi), \quad \wh{P_Nu}(\xi)=[\phi(\tfrac{\xi}{N})-\phi(\tfrac{\xi}{2N})]\wh{u}(\xi), \quad\text{and} \quad P_{>N}= Id - P_{\leq N}.
\end{align*}

These operators commute with all other Fourier multiplier operators. They are self-adjoint and bounded on every $L_x^p$ and $H_x^s$ space for $1\leq p\leq\infty$ and $s\geq 0$. We write $\PLo=P_{\leq 1}$ and $\PHi  =P_{> 1}$.

The Littlewood--Paley projections obey the following standard estimates.

\begin{lemma}[Bernstein estimates] For $1\leq r\leq q\leq\infty$ and $s\geq 0$ we have
\begin{align*}
\|\vert\nabla\vert^s P_{\leq N}u\|_{L_x^r(\R^3)}
&\lesssim N^s \|P_{\leq N}u\|_{L_x^r(\R^3)}\\
\|P_{>N}u\|_{L_x^r(\R^3)}&\lesssim N^{-s}
\|\vert\nabla\vert^s P_{>N}u\|_{L_x^r(\R^3)},\\
\|P_{\leq N}u\|_{L_x^q(\R^3)}
&\lesssim N^{\frac3r-\frac3q} \|P_{\leq N}u\|_{L_x^r(\R^3)}.
\end{align*} 	
\end{lemma}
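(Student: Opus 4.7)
The plan is to reduce all three estimates to Young's convolution inequality applied to appropriately scaled kernels. Write $P_{\leq N}$ as convolution with $K_N(x) := N^3 \check\phi(Nx)$, where $\check\phi$ denotes the inverse Fourier transform of the bump $\phi$. Since $\check\phi$ is Schwartz, $\|K_N\|_{L^1_x} = \|\check\phi\|_{L^1_x} \lesssim 1$, and by scaling $\|K_N\|_{L^p_x} \lesssim N^{3-3/p}$ for every $1 \leq p \leq \infty$.

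For the third (classical Bernstein) inequality, apply Young's inequality with exponents satisfying $1 + \tfrac{1}{q} = \tfrac{1}{p} + \tfrac{1}{r}$, giving
\[
\|P_{\leq N} u\|_{L^q_x} \leq \|K_N\|_{L^p_x} \|u\|_{L^r_x} \lesssim N^{3-3/p} \|P_{\leq N} u\|_{L^r_x} = N^{3/r - 3/q} \|P_{\leq N} u\|_{L^r_x},
\]
where one first replaces $u$ by $P_{\leq N} u$ in the convolution identity using $P_{\leq N}^2 = P_{\leq N} \tilde P_{\leq N}$ for a slightly fattened bump $\tilde\phi$.

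For the first estimate, write $|\nabla|^s P_{\leq N} = T_{N,s} \tilde P_{\leq N}$, where $T_{N,s}$ is convolution against the kernel whose Fourier symbol is $|\xi|^s \phi(\xi/N) = N^s \bigl[(\vert\xi\vert/N)^s \phi(\xi/N)\bigr]$. The function $\xi \mapsto |\xi|^s \phi(\xi)$ is compactly supported and smooth away from the origin, with controlled singularity at $\xi=0$ for $s\geq 0$; its inverse Fourier transform therefore lies in $L^1_x$, and a scaling argument shows that the kernel of $T_{N,s}$ has $L^1_x$-norm $\lesssim N^s$. Young's inequality with exponents $(1,r,r)$ then yields the claimed bound. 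The second estimate follows by the same mechanism applied to $P_{>N}$: write $P_{>N} = |\nabla|^{-s}\bigl( |\nabla|^s P_{>N}\bigr)$ and observe that the multiplier $|\xi|^{-s}[1-\phi(\xi/N)]$ is, modulo the factor $N^{-s}$, a bounded smooth function supported in $|\xi|\gtrsim N$ whose kernel again has $L^1_x$-norm $\lesssim N^{-s}$; one more application of Young's inequality finishes the proof.

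No real obstacle is expected; the only (very mild) subtlety is handling the non-smoothness at $\xi=0$ of the symbol $|\xi|^s \phi(\xi)$ when $s$ is not an even integer, which is dealt with by checking directly that its inverse Fourier transform is integrable (it is bounded near the origin in physical space and decays faster than any polynomial at infinity because the symbol is compactly supported).
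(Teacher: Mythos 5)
The paper does not prove this lemma at all---it is quoted as a standard Littlewood--Paley fact---so the only question is whether your argument is sound, and your route (Young's inequality plus kernel bounds) is indeed the standard one. Your treatment of the third estimate is correct. The one genuine flaw is in the justification of the kernel bounds for the first two estimates: you claim the inverse Fourier transform of $|\xi|^s\phi(\xi)$ ``decays faster than any polynomial at infinity because the symbol is compactly supported.'' Compact support of a symbol gives no decay of the kernel whatsoever (decay comes from smoothness of the symbol, not from its support); since $|\xi|^s\phi(\xi)$ fails to be smooth at $\xi=0$ for non-even $s$, the kernel in fact decays only like $|x|^{-3-s}$. Fortunately this is still integrable for $s>0$ (and $s=0$ is trivial), so the conclusion you need---an $L^1_x$ kernel with norm $\lesssim N^s$ after rescaling---is true; the clean way to see it is to decompose the symbol dyadically, $|\xi|^s\phi(\xi)=\sum_{M\leq 1}|\xi|^s\psi(\xi/M)$ with $\psi$ supported in an annulus, note each piece has kernel of $L^1_x$-norm $O(M^s)$, and sum the geometric series. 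The same remark applies to the second estimate: the symbol $|\xi|^{-s}[1-\phi(\xi/N)]$ is neither compactly supported nor Schwartz, so its kernel's integrability is not automatic and should likewise be obtained by summing dyadic annuli $|\xi|\sim M\geq N$, each contributing $O(M^{-s})$, which converges precisely because $s>0$ (for $s=0$ the bound is trivial since $P_{>N}=\mathrm{Id}-P_{\leq N}$ is bounded on every $L^r_x$). With these repairs the proof is complete and is the standard argument.
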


We will need the following fractional chain rule from \cite{ChrWei}. 

\begin{lemma}[Fractional chain rule, \cite{ChrWei}] Suppose $G\in C^1(\C)$ and $s\in(0,1]$. Let $1<r,r_2<\infty$ and $1<r_1\leq\infty$ satisfy $\tfrac{1}{r_1}+\tfrac{1}{r_2}=\tfrac{1}{r}$. Then
	$$\|\vert\nabla\vert^s G(u)\|_{L_x^r}\lesssim \|G'(u)\|_{L_x^{r_1}}\|\vert\nabla\vert^s u\|_{L_x^{r_2}}.$$ 
\end{lemma}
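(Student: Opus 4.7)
The plan is to invoke the classical argument of Christ--Weinstein, since the statement is quoted verbatim from \cite{ChrWei}; the reference suffices, but let me nonetheless outline the strategy I would use to reconstruct the proof.

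First I would reduce to $s\in(0,1)$: the endpoint $s=1$ reduces to the ordinary chain rule $\nabla G(u)=G'(u)\nabla u$ together with H\"older's inequality in the given exponent triple. For $s\in(0,1)$ the main tool is a square-function characterization of the homogeneous Sobolev norm, of Strichartz type:
$$
\||\nabla|^s f\|_{L^r} \;\sim\; \|\mathcal{S}_s f\|_{L^r} \qtq{with} \mathcal{S}_s f(x) := \Bigl( \smallint_0^\infty \bigl| \Phi_t f(x) \bigr|^2 \tfrac{dt}{t^{1+2s}} \Bigr)^{1/2},
$$
where $\Phi_t f(x)$ measures the oscillation of $f$ at scale $t$ near $x$ (e.g.\ $\Phi_t f(x) = \fint_{|h|<t}|f(x+h)-f(x)|\,dh$). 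This characterization holds for $1<r<\infty$ and the relevant range of $s$.

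Next I would obtain a pointwise domination $\mathcal{S}_s\, G(u)(x)\lesssim M(G'(u))(x)\cdot \mathcal{S}_s u(x)$, where $M$ is a Hardy--Littlewood-type maximal operator. The starting point is the identity
$$
G(u(x+h)) - G(u(x)) = (u(x+h)-u(x))\smallint_0^1 G'\bigl(u(x)+\tau(u(x+h)-u(x))\bigr)\,d\tau,
$$
which follows from $G\in C^1$ by the fundamental theorem of calculus. Substituting into $\Phi_t G(u)(x)$ and using Cauchy--Schwarz in $h$ (as well as in the $dt/t^{1+2s}$ measure) decouples the $u(x+h)-u(x)$ factor from the factor involving $G'$ evaluated along the segment joining $u(x)$ and $u(x+h)$. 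The latter factor is then bounded by the maximal function of $G'(u)$ on a suitable enlargement of $B(x,t)$, and one obtains the claimed pointwise inequality after reassembling the square function in $u$. The conclusion then follows from H\"older in $x$ with exponents $r_1,r_2$, combined with boundedness of $M$ on $L^{r_1}$ (here $r_1>1$ is essential) and the square-function characterization applied in reverse.

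The main obstacle is the pointwise domination of the $G'$-factor by $M(G'(u))$: the argument $u(x)+\tau(u(x+h)-u(x))$ need not be the value of $u$ at any particular point near $x$, so one cannot directly identify the integrand with $G'(u(\cdot))$ at a nearby point. The remedy is to use the fact that the continuous path $\tau\mapsto u(x)+\tau(u(x+h)-u(x))$ in $\mathbb{C}$ has small diameter and to replace the supremum of $|G'|$ along this path by an average that can in turn be dominated by $M(G'(u))$; this step is the only genuinely delicate part of the proof and is where the hypothesis $s<1$ enters in an essential way through the weight $dt/t^{1+2s}$.
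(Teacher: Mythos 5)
The paper offers no proof of this lemma: it is quoted verbatim from \cite{ChrWei}, so there is no in-paper argument to compare against. Your outline does follow the same classical route as the literature (reduction of $s=1$ to the ordinary chain rule plus H\"older; for $s\in(0,1)$ a Strichartz-type square-function characterization of $\||\nabla|^s f\|_{L^r}$, the fundamental theorem of calculus identity, a pointwise bound by a maximal function of $G'(u)$ times a square function of $u$, then H\"older and the Hardy--Littlewood maximal inequality), and that skeleton is the right one.

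The difficulty is precisely at the step you flag, and your proposed remedy does not work. The factor $\int_0^1 |G'(u(x)+\tau(u(x+h)-u(x)))|\,d\tau$ involves values of $G'$ at points of $\C$ that need not be values of $u$ anywhere; for a \emph{general} $C^1$ function $G$ one can make $|G'|$ enormous on the segment joining $u(x)$ and $u(x+h)$ while $G'(u(y))$ stays small for every $y$, so no averaging over the path can be dominated by $M(G'(u))(x)$, and the weight $dt/t^{1+2s}$ is irrelevant to this issue (the restriction $s<1$ enters only through the validity of the square-function characterization itself, which moreover carries its own exponent restrictions in the range $1<r<2$). The standard resolution -- the one underlying \cite{ChrWei} and every application in this paper -- is an additional structural hypothesis on $G'$: either H\"older continuity of $G'$, or the segment bound $|G'(\tau a+(1-\tau)b)|\lesssim |G'(a)|+|G'(b)|$ uniformly in $\tau\in[0,1]$, which holds automatically when $|G'(z)|$ is comparable to a nondecreasing function of $|z|$ (in particular for the power-type and polynomial nonlinearities $N_2,\dots,N_5$ used here). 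With that bound one splits the average into the term containing $G'(u(x))$, which factors out of the $h$-average, and the term containing $G'(u(x+h))$, which is handled by Cauchy--Schwarz in $h$ together with an $L^2$-averaged maximal estimate before reassembling the square function. This structural input on $G'$ is the ingredient missing from your sketch; as written, the lemma for arbitrary $G\in C^1(\C)$ cannot be obtained by the argument you describe.
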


We will also need a result of Coifman--Meyer \cite{CoiMey, CoiMey2} concerning bilinear Fourier multipliers. For a real-valued function $B(\xi_1,\xi_2)$ we define the operator $B[f,g]$ via
	\begin{equation}
	\label{eq:bilin}
	\wh{B[f,g]}(\xi):=(2\pi)^{3/2}\int_{\R^3} B(\eta,\xi-\eta)\wh{f}(\eta)\wh{g}(\xi-\eta)\,d\eta.
	\end{equation}

\begin{lemma}[Coifman--Meyer bilinear estimate, \cite{CoiMey, CoiMey2}] \label{L:CM} If the symbol $B(\xi_1,\xi_2)$ satisfies
	$$\vert\partial_{\xi_1}^\alpha\partial_{\xi_2}^\beta B(\xi_1,\xi_2)\vert\lesssim_{\alpha,\beta}(\vert\xi_1\vert+\vert\xi_2\vert)^{-(\vert\alpha\vert+\vert\beta\vert)}$$ 
for all multi-indices $\alpha,\beta$ up to sufficiently high order, then
	$$\|B[f,g]\|_{L_x^r}\lesssim\|f\|_{L_x^{r_1}}\|g\|_{L_x^{r_2}}$$
for all $1< r<\infty$ and $1<r_1,r_2<\infty$ satisfying $\tfrac{1}{r}=\tfrac{1}{r_1}+\tfrac{1}{r_2}.$ 
\end{lemma}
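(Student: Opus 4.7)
The plan is to prove the bilinear bound via the standard paraproduct decomposition, ultimately reducing matters to the H\"ormander--Mikhlin multiplier theorem and the vector-valued Fefferman--Stein maximal inequality. First I would decompose both inputs dyadically, writing $f = \sum_{N_1} P_{N_1} f$ and $g = \sum_{N_2} P_{N_2} g$, and split the resulting double sum for $B[f,g]$ into three regimes: low-high ($N_1 \leq N_2/8$), symmetric high-low, and high-high ($N_1 \sim N_2$). By the symmetric roles of $f$ and $g$, it suffices to treat the low-high and high-high contributions.

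For the low-high regime, on the support of $\widehat{P_{N_1} f}\,\widehat{P_{N_2} g}$, one has $|\xi_1|+|\xi_2| \sim N_2$, so the hypothesis forces the localized symbol $B(\xi_1,\xi_2)\phi_1(\xi_1/N_1)\phi_2(\xi_2/N_2)$ to be smooth with all mixed derivatives of order $\alpha$ in $\xi_1$ and $\beta$ in $\xi_2$ bounded by $N_1^{-|\alpha|} N_2^{-|\beta|}$. This allows me to expand it as a Fourier series on a fixed $N_2$-dilated box,
$$B\,\phi_1\phi_2 = \sum_{k_1, k_2 \in \mathbb Z^3} c_{k_1, k_2}^{N_1,N_2}\, e^{i(k_1\cdot \xi_1+k_2\cdot\xi_2)/N_2},$$
with $|c_{k_1, k_2}^{N_1,N_2}| \lesssim_M \langle k_1 \rangle^{-M}\langle k_2\rangle^{-M}$ uniformly in $N_1,N_2$ (for any $M$ controlled by the number of symbol derivatives available). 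This represents each low-high piece as a rapidly convergent sum of terms of the form $(\tau_{k_1/N_2} \widetilde P_{N_1} f)(\tau_{k_2/N_2}\widetilde P_{N_2} g)$, where $\widetilde P_{N_j}$ are fattened Littlewood--Paley projectors and $\tau_y$ denotes translation. Since translations are isometries on $L^p$, it suffices to bound $\bigl\|\sum_{N_2}(\widetilde P_{\leq N_2/8}f)(\widetilde P_{N_2} g)\bigr\|_{L^r}$, which by the Littlewood--Paley square-function characterization of $L^r$, the pointwise bound $|\widetilde P_{\leq N_2/8} f| \lesssim Mf$, H\"older, and Fefferman--Stein is controlled by
$$\bigl\| Mf \cdot \bigl(\textstyle\sum_{N_2}|\widetilde P_{N_2} g|^2\bigr)^{1/2}\bigr\|_{L^r} \lesssim \|Mf\|_{L^{r_1}} \|g\|_{L^{r_2}} \lesssim \|f\|_{L^{r_1}}\|g\|_{L^{r_2}}.$$

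The high-high case $N_1\sim N_2\sim N$ is the main obstacle, because the output frequency is no longer pinned to a single scale: one only has $|\xi|=|\xi_1+\xi_2|\lesssim N$, not $|\xi|\sim N$, so the paraproduct argument above cannot be applied verbatim. After the analogous Fourier-series expansion on scale $N$, the task reduces to bounding $\bigl\|\sum_N (\widetilde P_N f)(\widetilde P_N g)\bigr\|_{L^r}$. I would further decompose the output dyadically, using that $P_M\bigl[(\widetilde P_N f)(\widetilde P_N g)\bigr] = 0$ for $M\gg N$, so the sum rewrites as $\sum_M P_M S_M$ with $S_M := \sum_{N\gtrsim M}(\widetilde P_N f)(\widetilde P_N g)$. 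A square-function reduction together with a pointwise kernel estimate for $P_M$ (which contributes a gain $(M/N)^{3/r'}$ that is summable in $N\gtrsim M$) then reduces the problem to a maximal-function bound, and the vector-valued Fefferman--Stein inequality finishes the argument. The subtlety throughout is ensuring that the Fourier coefficients $c_{k_1,k_2}^{N_1,N_2}$ decay at a rate independent of $N_1, N_2$; this is precisely where the scale-invariant symbol bounds are used, and only finitely many derivatives of $B$ are needed, matching the phrase "up to sufficiently high order" in the hypothesis.
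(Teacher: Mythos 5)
The paper does not prove this lemma at all --- it is imported as a black box from Coifman--Meyer --- so there is no internal proof to compare with; your sketch is the standard paraproduct-plus-Fourier-series proof from the literature. As written, however, it has concrete gaps. First, expanding the localized symbol $B(\xi_1,\xi_2)\phi_1(\xi_1/N_1)\phi_2(\xi_2/N_2)$ in a Fourier series on an $N_2$-sized box in \emph{both} variables does not give coefficients $\lesssim_M \langle k_1\rangle^{-M}\langle k_2\rangle^{-M}$ uniformly in $N_1\le N_2$: each integration by parts in $\xi_1$ costs a factor $N_2/(N_1|k_1|)$, because the derivative can land on the scale-$N_1$ cutoff, so the claimed uniformity fails when $N_1\ll N_2$. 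The standard fix is to adapt the box to each variable (side $\sim N_1$ in $\xi_1$, side $\sim N_2$ in $\xi_2$), or simpler, not to decompose the low-frequency factor dyadically at all and work directly with $P_{\le N_2/8}f$, so that every cutoff lives at scale $N_2$ and the symbol hypothesis gives $N_2^{-|\alpha|-|\beta|}$ for all derivatives. Second, the step ``translations are isometries on $L^p$, so it suffices to bound the untranslated paraproduct'' is not valid: the shifts $k_j/N_2$ depend on the summation variable $N_2$, so a single translation cannot be pulled out of the sum over $N_2$. What is actually needed are \emph{shifted} maximal-function and shifted square-function estimates, with constants growing at most polynomially (or logarithmically) in $|k_j|$, which are then absorbed by the rapid decay of the Fourier coefficients; this absorption is the real technical heart of this proof scheme and is missing from your argument.

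For the high--high piece, note that in the range stated here ($r>1$) the elaborate output decomposition is unnecessary: pointwise Cauchy--Schwarz in $N$, H\"older, and the Littlewood--Paley square-function bound in $L^{r_1}$ and $L^{r_2}$ already control $\bigl\|\sum_N(\widetilde P_Nf)(\widetilde P_Ng)\bigr\|_{L^r}$. More importantly, the mechanism you invoke --- a ``pointwise kernel estimate for $P_M$'' yielding a gain $(M/N)^{3/r'}$ summable over $N\gtrsim M$ --- is unjustified: $P_M\bigl[(\widetilde P_Nf)(\widetilde P_Ng)\bigr]$ has no such smallness in general, since the product of two functions at frequency $N$ can carry $O(1)$ low-frequency content, and it is precisely the absence of such a gain that makes the high--high paraproduct delicate in the quasi-Banach range $r\le 1$. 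Either simplify to the Cauchy--Schwarz argument (legitimate for the exponents in the lemma) or replace the claimed kernel gain with a genuine argument (e.g., duality together with vector-valued maximal and square-function estimates).
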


\subsection{Linear estimates} We record here the dispersive and Strichartz estimates for the propagators $e^{it\Delta}$ and $e^{-itH}$.

As is well known, the linear Schr\"odinger propagator in three space dimensions can be written as
	$$[e^{it\Delta}f](x)=(4\pi i t)^{-\frac32}\int_{\R^3} e^{\frac{i\vert x-y\vert^2}{4t}}f(y)\,dy$$
for $t\neq 0$.  This yields the dispersive estimates
\begin{equation}\label{NLS disp}
\|e^{it\Delta} f\|_{L_x^r(\R^3)}\lesssim \vert t\vert^{-(\frac32-\frac3r)}\|f\|_{L_x^{r'}(\R^3)}
\end{equation}
for $t\neq 0$, where $2\leq r\leq\infty$ and $\tfrac{1}{r}+\tfrac{1}{r'}=1$. This estimate can be used to prove the standard Strichartz estimates for $e^{it\Delta}$.  We state the result we need in three space dimensions.

\begin{proposition}[Strichartz estimates for $e^{it\Delta}$, \cite{GinVel, KeeTao, Str}]
For a space-time slab $I\times\R^3$ and $2\leq q,\tilde{q}\leq\infty$ with $\tfrac{2}{q}+\tfrac{3}{r}=\tfrac{2}{\tilde{q}}+\tfrac{3}{\tilde{r}}=\tfrac{3}{2}$, we have
\begin{align*}
    \Bigl\|e^{it\Delta}\varphi + \int_0^t e^{i(t-s)\Delta}F(s)\,ds\Bigr\|_{L_t^q L_x^r(I\times\R^3)}
    &\lesssim\|\varphi\|_{L_x^2} + \|F\|_{L_t^{\tilde{q}'}L_x^{\tilde{r}'}(I\times\R^3)}.
\end{align*}
\end{proposition}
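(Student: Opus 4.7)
The plan is to deduce this standard result from the dispersive estimate \eqref{NLS disp} combined with the $L^2_x$ conservation $\|e^{it\Delta}f\|_{L^2_x}=\|f\|_{L^2_x}$, following the $TT^*$ scheme of Ginibre--Velo together with the bilinear interpolation argument of Keel--Tao required at the endpoint. First I would dispatch the homogeneous estimate $\|e^{it\Delta}\varphi\|_{L^q_t L^r_x}\lesssim\|\varphi\|_{L^2_x}$ for admissible $(q,r)$. By duality and the $TT^*$ identity, this is equivalent to bounding the bilinear form
$$
\iint \bigl\langle e^{i(t-s)\Delta} F(s),\, G(t)\bigr\rangle\, ds\, dt
$$
in terms of $\|F\|_{L^{q'}_t L^{r'}_x}\|G\|_{L^{q'}_t L^{r'}_x}$. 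Inserting the pointwise bound $|\langle e^{i(t-s)\Delta}F(s),G(t)\rangle|\lesssim |t-s|^{-(3/2-3/r)}\|F(s)\|_{L^{r'}_x}\|G(t)\|_{L^{r'}_x}$ from \eqref{NLS disp} and applying the Hardy--Littlewood--Sobolev inequality in the time variable closes the estimate whenever $2/q = 3/2-3/r$ and $q>2$.

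The main obstacle is the sharp endpoint $(q,r)=(2,6)$, where Hardy--Littlewood--Sobolev fails and a more delicate treatment is needed. Here I would invoke the bilinear decomposition of Keel--Tao: split the time difference dyadically into pieces $\{(s,t):|t-s|\sim 2^k\}$ and establish, for each $k$, off-diagonal bilinear bounds of the form
$$
\Bigl| \iint_{|t-s|\sim 2^k}\bigl\langle e^{i(t-s)\Delta} F(s),G(t)\bigr\rangle\, ds\, dt\Bigr| \lesssim 2^{-k\beta(a,b)}\|F\|_{L^2_t L^{a'}_x}\|G\|_{L^2_t L^{b'}_x}
$$
by interpolating between the dispersive inequality and $L^2$ conservation. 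Real interpolation in $(a,b)$ on a neighborhood of the endpoint $(6,6)$, together with summation over $k\in\mathbb{Z}$, then yields the $L^2_t L^6_x$ bound, at the cost of the logarithmic loss that is avoided by the bilinear setup.

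Once the homogeneous estimate is in hand for all admissible pairs, the full inhomogeneous statement for distinct $(q,r)\neq(\tilde q,\tilde r)$ follows from the $TT^*$ method applied to the operator $F\mapsto \int_\R e^{-is\Delta}F(s)\,ds$ with asymmetric source and target. To pass from the resulting untruncated (non-retarded) estimate to the retarded integral $\int_0^t e^{i(t-s)\Delta}F(s)\,ds$ appearing in the statement, I would apply the Christ--Kiselev lemma, which is applicable whenever $q>\tilde q'$; the remaining diagonal case $q=\tilde q'$ (including the double endpoint $q=\tilde q=2$) is handled directly within the Keel--Tao bilinear framework, which produces the retarded estimate without need for the Christ--Kiselev detour. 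Finally, the stated mixed-norm bound follows by combining the homogeneous and inhomogeneous pieces and taking suprema over the slab $I\times\mathbb{R}^3$.
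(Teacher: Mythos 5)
Your sketch is correct: the paper does not prove this proposition but simply quotes it from the literature, and your argument — $TT^*$ plus the dispersive bound \eqref{NLS disp} and Hardy--Littlewood--Sobolev for non-endpoint pairs, the Keel--Tao dyadic bilinear interpolation for the $(2,6)$ endpoint, and Christ--Kiselev (or the Keel--Tao retarded estimates at the double endpoint $q=\tilde q=2$) to pass to the retarded integral — is precisely the standard proof contained in the cited references \cite{GinVel, KeeTao}. No gaps beyond the level of detail appropriate to a sketch.
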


Using stationary phase one can prove a similar dispersive estimate for $e^{-itH}$ (see \cite{GNT:dd, GNT:3d}). In fact, there is a small gain at low frequencies compared to the estimates for the linear Schr\"odinger propagator; while the dispersion relation for this propagator has less curvature in the radial direction than that for Schr\"odinger, this is more than compensated for by the increased curvature in the angular directions.

\begin{proposition}[Estimates for $e^{-itH}$, \cite{GNT:dd, GNT:3d}] \label{lemma:dispersive} For $2\leq r\leq\infty$ we have
    \begin{equation}
    \label{eq:dispersive}
    \|e^{-itH}f\|_{L_x^r(\R^3)}\lesssim \vert t\vert^{-(\frac32-\frac3r)}\|U^{\frac12-\frac1r}f\|_{L_x^{r'}(\R^3)}
    \end{equation}
for $t\neq 0$.  In particular, for a space-time slab $I\times\R^3$ and $2\leq q,\tilde{q}\leq\infty$ with $\tfrac{2}{q}+\tfrac{3}{r}=\tfrac{2}{\tilde{q}}+\tfrac{3}{\tilde{r}}=\tfrac{3}{2}$, we have
\begin{align*}
    \Bigl\| e^{-itH}\varphi + \int_0^t e^{-i(t-s)H}F(s)\,ds\Bigr\|_{L_t^q L_x^r(I\times\R^3)}
    &\lesssim \|\varphi\|_{L_x^2} + \|F\|_{L_t^{\tilde{q}'}L_x^{\tilde{r}'}(I\times\R^3)}.
\end{align*}
\end{proposition}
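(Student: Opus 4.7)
The plan is to reduce the Strichartz inequalities to the dispersive bound \eqref{eq:dispersive} via the abstract framework of Keel--Tao \cite{KeeTao} applied dyadically in frequency, so the heart of the argument is \eqref{eq:dispersive} itself. Since $e^{-itH}$ is unitary on $L^2_x$, the case $r=2$ is trivial, and by Riesz--Thorin interpolation it suffices to prove the endpoint
\[
\|e^{-itH}f\|_{L^\infty_x(\R^3)} \lesssim |t|^{-3/2}\,\|U^{1/2}f\|_{L^1_x(\R^3)}.
\]
After Littlewood--Paley decomposition, this reduces to the dyadic statement
\[
\|e^{-itH}P_N f\|_{L^\infty_x} \lesssim |t|^{-3/2}\,U(N)^{1/2}\,\|P_N f\|_{L^1_x}, \qquad U(N):=N/\sqrt{2\gamma+N^2},
\]
summed over $N\in 2^{\mathbb{Z}}$ with the aid of the essentially disjoint group-velocity regions $|x|/|t|\sim H'(N)$ swept out by distinct dyadic shells.

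The dyadic bound is extracted from stationary phase applied to the oscillatory kernel
\[
K_{N,t}(x):=(2\pi)^{-3/2}\int_{\R^3} e^{i[x\cdot\xi-tH(\xi)]}\,\phi_N(\xi)\,d\xi,
\]
where $\phi_N$ is a smooth radial cut-off to $|\xi|\sim N$. Since $H(\xi)=|\xi|\sqrt{2\gamma+|\xi|^2}$ depends only on $r=|\xi|$, the Hessian $\nabla^2 H(\xi)$ is diagonal in any radial/tangential frame, with one radial eigenvalue $H''(r)=2r(3\gamma+r^2)/(2\gamma+r^2)^{3/2}$ and two tangential eigenvalues equal to $H'(r)/r=2(\gamma+r^2)/[r\sqrt{2\gamma+r^2}]$. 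A direct computation yields the key identification
\[
|\det \nabla^2 H(\xi)|\sim U(\xi)^{-1} \qquad \text{for all } \xi\in \R^3\setminus\{0\},
\]
which when inserted into the three-dimensional stationary-phase formula gives $|K_{N,t}(x)|\lesssim |t|^{-3/2}\,U(N)^{1/2}$ near the critical radius $|x|\sim |t|H'(N)$; repeated integration by parts in the non-stationary regime produces rapid decay in $|x|$ when $|x|/|t|$ is away from $H'(N)$, which supports the subsequent summation in $N$.

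The main obstacle is uniformity across all dyadic scales. As $N\to 0$ the radial curvature degenerates like $N$ while the two tangential curvatures blow up like $1/N$; as $N\to\infty$ both approach constants (the Schr\"odinger regime). It is precisely this balance that explains why $|\det \nabla^2 H|^{-1/2}\sim U^{1/2}$, rather than a worse factor, and hence why the weight $U^{1/2}$ in \eqref{eq:dispersive} is sharp. Carefully tracking the constants in the stationary-phase expansion uniformly in $N$, and verifying that the off-critical contributions decay fast enough to justify the dyadic summation, constitutes the delicate part of the proof; this is the step I expect to require the most care, exactly as it did in \cite{GNT:dd, GNT:3d}. Once \eqref{eq:dispersive} is established, the Strichartz inequalities follow by a standard $TT^*$ argument together with the Keel--Tao endpoint theorem, with the weight $U^{1/2-1/r}$ absorbed through dyadic frequency localization and Littlewood--Paley summation.
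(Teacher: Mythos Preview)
The paper does not actually prove this proposition; it merely cites the result from \cite{GNT:dd, GNT:3d} after the remark ``Using stationary phase one can prove a similar dispersive estimate for $e^{-itH}$.'' Your proposal is a correct outline of exactly the stationary-phase argument carried out in those references: the Hessian computation $|\det\nabla^2 H(\xi)|\sim U(\xi)^{-1}$ is right and is the key identity, and the deduction of Strichartz from the dispersive bound via the Keel--Tao machinery is standard.

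One clarification on your summation step: the phrase ``essentially disjoint group-velocity regions'' is only accurate at high frequencies, where $H'(N)\sim 2N$ separates dyadic shells as in the Schr\"odinger case. For $N\ll 1$ the group velocity $H'(N)\to\sqrt{2\gamma}$ is essentially constant and the shells do not separate; however, there the bound already carries the factor $U(N)^{1/2}\sim N^{1/2}$, which is summable over $N\in 2^{\mathbb{Z}}$, $N\leq 1$, so no orthogonality is needed in that regime. This is worth making explicit, and is part of what the paper alludes to in saying there is ``a small gain at low frequencies.'' For the Strichartz inequality without the $U$-weight, note that the admissible range in three dimensions has $r\in[2,6]$, so $r'\in[6/5,2]$ and the Mikhlin multiplier $U^{1/2-1/r}$ is bounded on $L^{r'}_x$; this lets you drop the weight before invoking Keel--Tao, which is simpler than the dyadic summation you sketch at the end.
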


For an interval $I$ and $s\geq 0$ we define the Strichartz norm by
$$\|u\|_{\dot{S}^s(I)}=\sup\big\{\|\vert\nabla\vert^s u\|_{L_t^qL_x^r(I\times\R^3)}:\ 
	2\leq q\leq\infty,\quad \tfrac{2}{q}+\tfrac{3}{r}=\tfrac{3}{2}\big\}.$$	
The Strichartz space $\dot{S}^s(I)$ is then defined to be the closure of test functions under this norm.  We let $\dot{N}^s(I)$ denote the corresponding dual Strichartz space.

In several places it will be more convenient to work with equation \eqref{eq:cq3} rather than the diagonalized equation \eqref{eq:cq v}.  The linear propagator associated with \eqref{eq:cq3} takes the following form:
\begin{equation}\label{E:matrix prop}
V^{-1}e^{-itH} V \begin{bmatrix} f_1\\[1ex] f_2\end{bmatrix} =\begin{bmatrix} \cos(tH) &  U\sin(tH) \\[1ex]
            -U^{-1}\sin(tH) & \cos(tH) \end{bmatrix}\begin{bmatrix} f_1\\[1ex] f_2\end{bmatrix},
\end{equation}
for any function $f=f_1+if_2$. We will make use of the following Strichartz estimates for this propagator:

\begin{lemma}\label{L:VStrichartz} Fix $T>0$.  Given $2< q,\tilde{q}\leq\infty$ with $\tfrac{2}{q}+\tfrac{3}{r}=\tfrac{2}{\tilde{q}}+\tfrac{3}{\tilde{r}}=\tfrac{3}{2}$, we have 
\begin{align}
\Bigl\| V^{-1}e^{-itH}V\varphi + \int_0^t V^{-1}e^{-i(t-s)H}VF(s)\,ds\Bigr\|_{L_t^q L_x^r}
    &\lesssim_T \|\varphi\|_{L_x^2} + \|F\|_{L_t^{\tilde{q}'}L_x^{\tilde{r}'}},
\end{align}
where all space-time norms are over $[-T,T]\times\R^3$.
\end{lemma}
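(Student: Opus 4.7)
The plan is to split $\varphi = \PLo\varphi + \PHi\varphi$ and estimate the two pieces separately. This decomposition is forced by the singularity of the symbol of $U^{-1}$ at $\xi=0$, which appears in the off-diagonal block of \eqref{E:matrix prop} and prevents a uniform-in-time bound; this is precisely why the constant in the lemma is allowed to depend on $T$.

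For the high-frequency piece $\PHi\varphi$, I would exploit the fact that on $\{|\xi|\gtrsim 1\}$ the symbols $U(\xi)$ and $U(\xi)^{-1}$ are smooth and bounded, so both $U\PHi$ and $U^{-1}\PHi$ act as Mikhlin multipliers on every $L^p$, $1<p<\infty$. Writing $\cos(tH)=\tfrac12(e^{itH}+e^{-itH})$ and $\sin(tH)=\tfrac{1}{2i}(e^{itH}-e^{-itH})$, each entry of \eqref{E:matrix prop} applied to $\PHi\varphi$ becomes a bounded Mikhlin multiplier composed with $e^{\pm itH}\PHi$. Proposition~\ref{lemma:dispersive} then delivers the homogeneous and Duhamel Strichartz bounds for this piece with constant \emph{independent} of $T$.

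For the low-frequency piece $\PLo\varphi$ I would use energy estimates together with Bernstein's inequality. On $\mathrm{supp}(\PLo)$ we have the pointwise symbol bounds
$$
|\cos(tH(\xi))|\leq 1,\qquad |U(\xi)\sin(tH(\xi))|\leq U(\xi)\leq 1,\qquad \Bigl|\frac{\sin(tH(\xi))}{U(\xi)}\Bigr|\leq |t|\,\frac{H(\xi)}{U(\xi)}=|t|(2\gamma+|\xi|^2),
$$
so by Plancherel every entry of \eqref{E:matrix prop} acts on $\PLo\varphi$ with $L^2_x$-norm $\lesssim 1+|t|\leq 1+T$. Since the output is supported on $\{|\xi|\lesssim 1\}$, Bernstein upgrades this to
$$
\|V^{-1}e^{-itH}V\PLo\varphi\|_{L^r_x}\lesssim \|V^{-1}e^{-itH}V\PLo\varphi\|_{L^2_x}\lesssim (1+T)\|\varphi\|_{L^2_x},
$$
uniformly for $t\in[-T,T]$ and $r\geq 2$; integrating in $L^q_t([-T,T])$ yields the homogeneous low-frequency bound. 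For the Duhamel term with $\PLo F$, Minkowski's inequality pulls $\int_0^t ds$ outside the spatial norm, and the same pointwise bound applied to $\PLo F(s)$ — followed by the dual Bernstein inequality $\|\PLo F(s)\|_{L^2_x}\lesssim \|F(s)\|_{L^{\tilde r'}_x}$ and H\"older in $s\in[-T,T]$ (which requires $\tilde q'<\infty$, i.e., $\tilde q>2$) — gives the bound by $C_T\|F\|_{L^{\tilde q'}_tL^{\tilde r'}_x}$.

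No serious obstacle arises: the high-frequency analysis is routine given Proposition~\ref{lemma:dispersive} and standard Mikhlin theory, while the low-frequency analysis is entirely soft. The only point requiring any care is the isolation and absorption into $C_T$ of the polynomial-in-$T$ growth coming from $U^{-1}$ at low frequencies, which is exactly what the splitting by $\PLo,\PHi$ is designed to handle.
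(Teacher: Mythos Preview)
Your argument is correct and follows essentially the same approach as the paper: isolate the singularity of $U^{-1}$ at low frequency and handle it by a crude energy/Bernstein bound with $T$-dependent constant, while the rest is covered by the Strichartz estimates for $e^{-itH}$ in Proposition~\ref{lemma:dispersive}. The paper is somewhat more economical: it first invokes a $TT^*$ argument (which is legitimate since $q,\tilde q>2$ exclude the endpoint) to reduce to the homogeneous case $F\equiv 0$, and then observes that three of the four entries of \eqref{E:matrix prop} are already handled by Proposition~\ref{lemma:dispersive} because $U$ is a bounded Mikhlin multiplier globally (not just at high frequency); only the single term $\PLo U^{-1}\sin(tH)$ requires the $T$-dependent treatment you describe. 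Your version splits everything into low and high frequencies and treats the Duhamel term by hand, which duplicates some work but arrives at the same place.
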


\begin{proof}
As we are excluding the endpoint, it suffices (via a $TT^*$ argument) to prove the result when $F\equiv 0$; moreover, it clearly suffices to consider each entry in the matrix \eqref{E:matrix prop} separately.  In view of the boundedness of $U$, three out of four of these matrix elements obey the same Strichartz estimates as $e^{-itH}$; see Proposition~\ref{lemma:dispersive}.  As $\PHi U^{-1}$ is also bounded, we need only prove Strichartz estimates for $\PLo U^{-1}\sin(tH)$.  However, this is easily done via H\"older and Bernstein's inequality:
\begin{align}
\| \PLo U^{-1}\sin(tH) \varphi\|_{L_t^q L_x^r([-T,T]\times\R^3)} &\lesssim T^{\frac1q} \| \PLo U^{-1}\sin(tH) \varphi\|_{L_t^\infty L_x^2([-T,T]\times\R^3)} \notag\\
&\lesssim T^{1+\frac1q} \|\varphi\|_{L^2_x(\R^3)}. \label{E:2:30}
\end{align}
This completes the proof of the lemma.
\end{proof}

At high frequencies, the operator $e^{-itH}$ closely resembles the Schr\"odinger propagator (on bounded time intervals); specifically, we have
\begin{align}\label{HvsDelta}
\sqrt{|\xi|^2 (2\gamma+|\xi|^2)} = |\xi|^2 + \gamma + m(\xi) \qtq{with} |m(\xi)|\lesssim \langle\xi\rangle^{-2}.
\end{align}
Indeed, it is not difficult to verify that $m(\xi)$ defines a Mikhlin multiplier.  This observation will play a key role in our treatment of highly concentrated profiles in Section~\ref{section:weak}.  For the moment, however,
we simply use it to obtain a crude local smoothing estimate.

\begin{lemma}[Local smoothing]\label{L:VSmoothing} Given $T>0$ and $R>0$,
\begin{align}
\bigl\| |\nabla|^{\frac12}  V^{-1}e^{-itH}V \varphi \bigr\|_{L^2_{t,x}(\{|t|\leq T\}\times\{|x|\leq R\})} &\lesssim_{R,T} \|\varphi\|_{L_x^2}.
\end{align}
\end{lemma}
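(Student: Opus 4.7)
The plan is to reduce, via the matrix representation \eqref{E:matrix prop}, to a scalar local smoothing estimate for the propagators $e^{\pm itH}$, and to handle separately the low-frequency part of the ``bad'' entry $U^{-1}\sin(tH)$.

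Writing $\varphi = \varphi_1+i\varphi_2$ and expanding $V^{-1}e^{-itH}V\varphi$ via \eqref{E:matrix prop} yields four types of term: $\cos(tH)\varphi_j$, $U\sin(tH)\varphi_2$, and $U^{-1}\sin(tH)\varphi_1$. Since $U$ is bounded on $L^2_x$ and commutes with $|\nabla|^{1/2}$ and $e^{\pm itH}$, the first three reduce to the scalar bound
\[
\bigl\||\nabla|^{\frac12} e^{\pm itH}\psi\bigr\|_{L^2_{t,x}([-T,T]\times\{|x|\leq R\})} \lesssim R^{\frac12}\|\psi\|_{L^2_x}.
\]
For $U^{-1}\sin(tH)\varphi_1$, I would split $\varphi_1 = \PLo\varphi_1 + \PHi\varphi_1$. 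The high-frequency piece reduces to the bound above, because $\PHi U^{-1}$ is a bounded Fourier multiplier commuting with $|\nabla|^{1/2}$ and $\sin(tH)$. For the low-frequency piece, the symbol of $U^{-1}\sin(tH)\PLo$ obeys $|U^{-1}(\xi)\sin(tH(\xi))| \leq |tH(\xi)/U(\xi)| = |t|\langle\xi\rangle^2 \lesssim T$ on the support of $\PLo$; combined with the boundedness of $|\nabla|^{\frac12}\PLo$ on $L^2_x$, this yields an $L^\infty_t L^2_x$-bound of order $T$, and Cauchy--Schwarz in $t$ then finishes the job.

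For the scalar estimate above, I would use the standard pointwise-in-$x$ Plancherel-in-$t$ argument: for fixed $x$, the temporal Fourier transform of $|\nabla|^{\frac12}e^{-itH}\psi(x)$ is supported on $\tau \leq 0$ and, at $\tau = -\rho$, equals a constant multiple of the spherical integral
\[
\int_{|\xi|=s(\rho)} |\xi|^{\frac12}\,\widehat\psi(\xi)\,e^{ix\cdot\xi}\,\frac{d\sigma(\xi)}{|\nabla H(\xi)|},
\]
where $s(\rho)>0$ is determined by $H(s(\rho)) = \rho$. Integrating the square over $x \in \{|x|\leq R\}$, applying the Agmon--H\"ormander extension bound $\|\widehat{g\,d\sigma_s}\|_{L^2(\{|x|\leq R\})}^2 \lesssim R\,\|g\|_{L^2(d\sigma_s)}^2$ for the sphere $\{|\xi|=s\} \subset \R^3$ uniformly in $s$, and then making the coarea change of variables $d\rho = |\nabla H(s)|\,ds$, yields
\[
\int_\R\int_{|x|\leq R}\bigl||\nabla|^{\frac12} e^{-itH}\psi\bigr|^2\,dx\,dt \lesssim R\int_0^\infty \frac{s}{|\nabla H(s)|}\int_{|\xi|=s}|\widehat\psi|^2\,d\sigma\,ds.
\]
Since $|\nabla H(s)| = (2\gamma+2s^2)/\sqrt{2\gamma+s^2}$, the ratio $s/|\nabla H(s)|$ is uniformly bounded in $s\geq 0$, so the right-hand side is $\lesssim R\,\|\psi\|_{L^2_x}^2$, as required.

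The main obstacle I expect is invoking the Agmon--H\"ormander sphere-extension estimate uniformly in the radius $s$, but by radial symmetry the level sets of $H$ are genuine round spheres, so this reduces to the classical $S^2 \subset \R^3$ extension estimate. An alternative route that avoids restriction theory altogether is to decompose $H(\xi) = |\xi|^2 + \gamma + m(\xi)$ via \eqref{HvsDelta} and write $\PHi e^{-itH} = e^{-it\gamma}\,e^{-itm(\nabla)}\,e^{it\Delta}\PHi$, where $e^{-itm(\nabla)}\PHi$ defines a Mikhlin multiplier uniformly for $|t|\leq T$; this reduces the high-frequency part of the scalar estimate directly to the classical Kato local smoothing for the Schr\"odinger propagator, while the low-frequency part is handled as in the previous paragraph.
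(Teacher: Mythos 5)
Your proposal is correct, but the core of it runs along a genuinely different route than the paper. Both arguments start the same way: reduce via \eqref{E:matrix prop} to scalar multipliers, use boundedness of $U$ and $\PHi U^{-1}$, and dispose of $\PLo U^{-1}\sin(tH)$ by the crude $L^\infty_t L^2_x$ bound of size $T$ plus H\"older in time (this is exactly the paper's \eqref{E:2:30}-style step). The difference is in the dispersive part: the paper splits into low and high frequencies, treats low frequencies crudely as above, and at high frequencies compares $e^{-itH}$ with $e^{-it(\gamma-\Delta)}$ via \eqref{HvsDelta} — the difference gains $\langle\xi\rangle^{-2}$, so it is bounded in $L^\infty_t L^2_x$ by $T\|\varphi\|_{L^2_x}$ — and then quotes classical Kato smoothing for $e^{it\Delta}$; the resulting constant depends on both $R$ and $T$. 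You instead prove sharp Kato-type smoothing for $e^{\pm itH}$ directly, by Plancherel in time, the coarea formula on the level sets of the radial, strictly increasing symbol $H$, and the sphere extension (Agmon--H\"ormander) bound; your two key analytic points check out: rescaling the classical unit-sphere bound does give $\|\widehat{g\,d\sigma_s}\|_{L^2(\{|x|\leq R\})}^2\lesssim R\|g\|_{L^2(d\sigma_s)}^2$ uniformly in $s>0$, and $s/|\nabla H(s)|=s\sqrt{2\gamma+s^2}/(2(\gamma+s^2))$ is indeed uniformly bounded. What this buys is a global-in-time estimate with constant $O(R^{1/2})$, uniform over all frequencies, so only the $\PLo U^{-1}\sin(tH)$ entry carries $T$-dependence; what the paper's route buys is brevity, since \eqref{HvsDelta} and Schr\"odinger smoothing are already in hand and the $T$-dependence is harmless for the intended use in Corollary~\ref{C:smoothing}.

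One caveat on your closing ``alternative route'': writing $\PHi e^{-itH}=e^{-it\gamma}e^{-itm(\nabla)}e^{it\Delta}\PHi$ and invoking Mikhlin bounds does not by itself reduce matters to Kato smoothing, because $e^{-itm(\nabla)}$ does not commute with the spatial cutoff to $\{|x|\leq R\}$, and Mikhlin boundedness on $L^p_x(\R^3)$ says nothing about the localized $L^2_{t,x}$ norm. The standard repair is to estimate the difference $(e^{-itm(\nabla)}-1)|\nabla|^{\frac12}e^{it\Delta}\PHi\varphi$ globally in $L^\infty_tL^2_x$ using $|e^{-itm(\xi)}-1|\lesssim T\langle\xi\rangle^{-2}$ — which is precisely the paper's high-frequency argument. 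Since your primary argument does not rely on this aside, it does not affect the validity of the proposal.
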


\begin{proof}
We treat high and low frequencies separately.  In the low-frequency regime, we exploit \eqref{E:matrix prop} and argue as in \eqref{E:2:30} to deduce that
$$
\bigl\| |\nabla|^{\frac12} \PLo V^{-1}e^{-itH}V \varphi \bigr\|_{L^2_{t,x}(\{|t|\leq T\}\times\{|x|\leq R\})} \lesssim T^{\frac12}(1+T) \|\varphi\|_{L_x^2}. 
$$
In the high-frequency regime, we can use the usual local smoothing estimate for the Schr\"odinger equation together with
$$
\bigl\| |\nabla|^{\frac12} \PHi  V^{-1}[e^{-itH} - e^{-it(\gamma-\Delta)} ] V\varphi \bigr\|_{L^\infty_t L^2_{x}(\{|t|\leq T\}\times\R^3)} \lesssim T \|\varphi\|_{L_x^2},
$$
which follows from \eqref{HvsDelta}.
\end{proof}

In practice, we will use the following corollary.

\begin{corollary}\label{C:smoothing} Let $K$ be a compact subset of $I\times\R^3$ for some interval $I\subset\R$. Then the following estimates hold:
\begin{align*}
\|\nabla e^{it\Delta}f\|_{L_{t,x}^2(K)}&\lesssim_K \|e^{it\Delta}f\|_{L_{t,x}^{10}(I\times\R^3)}^{\frac13}\| f\|_{\dot H_x^1}^{\frac23},\\
\|\nabla V^{-1}e^{-itH}Vf\|_{L_{t,x}^2(K)}&\lesssim_K \|V^{-1}e^{-itH}Vf\|_{L_{t,x}^{10}(I\times\R^3)}^{\frac13}\| f\|_{\dot H_x^1}^{\frac23}.
\end{align*}
\end{corollary}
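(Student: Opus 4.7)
The plan is to perform a Littlewood--Paley decomposition of the linear solution and bound each dyadic piece in two competing ways, then sum with a balancing choice of scale. The argument is identical for both estimates, using the Schr\"odinger local smoothing for the first and Lemma~\ref{L:VSmoothing} for the second; this is legitimate because $\nabla$ and every $P_N$ commute with both $e^{it\Delta}$ and $V^{-1}e^{-itH}V$, the latter being a matrix of Fourier multipliers as displayed in \eqref{E:matrix prop}.

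For concreteness set $u := e^{it\Delta}f$, $A := \|u\|_{L^{10}_{t,x}(I\times\R^3)}$, $B := \|f\|_{\dot H^1_x}$, and use Minkowski to write $\|\nabla u\|_{L^2_{t,x}(K)} \leq \sum_N \|\nabla P_N u\|_{L^2_{t,x}(K)}$. I claim two per-piece bounds: \emph{(a)} $\|\nabla P_N u\|_{L^2_{t,x}(K)}\lesssim_K N A$, obtained by H\"older on the compact $K$ followed by the fiberwise Bernstein estimate $\|\nabla P_N u(t)\|_{L^{10}_x}\lesssim N\|u(t)\|_{L^{10}_x}$; and \emph{(b)} $\|\nabla P_N u\|_{L^2_{t,x}(K)}\lesssim_K N^{-1/2} B$ for $N\geq 1$. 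For \emph{(b)}, local smoothing applied to the datum $\nabla P_N f\in L^2_x$ yields $\||\nabla|^{3/2}P_N u\|_{L^2_{t,x}(\tilde K)}\lesssim \|f\|_{\dot H^1_x}$ on a slight enlargement $\tilde K$ of $K$. To convert derivative level $3/2$ into level $1$, I factor $\nabla P_N u = S_N\bigl(|\nabla|^{1/2}\nabla P_N u\bigr)$ where $S_N := \tilde P_N |\nabla|^{-1/2}$ and $\tilde P_N$ is a fattened projection with $\tilde P_N P_N = P_N$. A simple rescaling shows the convolution kernel of $S_N$ is a Schwartz function concentrated at spatial scale $N^{-1}$ with $L^1$-norm $\lesssim N^{-1/2}$; splitting the input of $S_N$ via a cutoff supported in $\tilde K$, Young's inequality handles the local part (yielding the desired $N^{-1/2}\|f\|_{\dot H^1_x}$), while the Schwartz tail of the kernel, combined with the crude global bound $\||\nabla|^{3/2}P_N u(t)\|_{L^2_x(\R^3)}\lesssim N^{1/2}\|f\|_{\dot H^1_x}$ coming from Bernstein and unitarity, renders the off-diagonal contribution $O_K(N^{-M})$ for any $M$ of our choosing.

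Once \emph{(a)} and \emph{(b)} are in hand, I choose the balancing frequency $N_0 := (B/A)^{2/3}$; the low-frequency sum $\sum_{N\leq N_0} N A \lesssim N_0 A = A^{1/3}B^{2/3}$ and the high-frequency sum $\sum_{N>N_0} N^{-1/2} B\lesssim N_0^{-1/2} B = A^{1/3}B^{2/3}$ are both geometric and combine to give the asserted inequality. (Frequencies $N\leq 1$ contribute an acceptable amount via \emph{(a)} alone.) The main obstacle is item \emph{(b)}: the reverse Bernstein inequality fails on compact sets, so we cannot directly pass from a bound on $|\nabla|^{3/2}P_N u$ over $\tilde K$ to a bound on $\nabla P_N u$ over $K$ with a factor of $N^{-1/2}$; the kernel-localization device above is the standard substitute, converting the would-be reverse Bernstein into a local $N^{-1/2}$ gain plus a negligibly small tail.
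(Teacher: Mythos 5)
Your proposal is correct, and its skeleton is the same as the paper's: bound low frequencies by H\"older on $K$ plus Bernstein against the $L^{10}_{t,x}$ norm, bound high frequencies via local smoothing, and balance $NA$ against $N^{-1/2}B$. The difference lies in how you execute the high-frequency bound. The paper uses a single frequency cut rather than a dyadic sum, and it never needs your kernel-localization device: it applies local smoothing with the datum $|\nabla|^{1/2}P_{>N}f$ (the leftover Riesz transform being bounded on $L^2_x$), so the half-derivative gain lands on the data, where Bernstein is applied globally, giving directly $\|\nabla P_{>N}e^{it\Delta}f\|_{L^2_{t,x}(K)}\lesssim_K \||\nabla|^{1/2}P_{>N}f\|_{L^2_x}\lesssim N^{-1/2}\|f\|_{\dot H^1_x}$, and likewise for $V^{-1}e^{-itH}V$ via Lemma~\ref{L:VSmoothing}. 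This sidesteps entirely the failure of reverse Bernstein on compact sets that forces your route through $\||\nabla|^{3/2}P_N u\|_{L^2(\tilde K)}$ and the splitting of $S_N$ into a local piece plus a rapidly decaying tail; your workaround is valid but does extra labor for the same output. One loose end to tidy: your bound \emph{(b)} is only asserted for $N\geq 1$, so to be sure every frequency above the balancing scale $N_0=(B/A)^{2/3}$ is covered you should record that $A\lesssim_K B$ (for $e^{it\Delta}$ by Strichartz and Sobolev embedding, and for $V^{-1}e^{-itH}V$ on bounded time intervals by Lemma~\ref{L:VStrichartz} and Sobolev embedding), whence $N_0\gtrsim 1$; in the paper's version the high-frequency estimate holds for every $N>0$, so no such remark is needed. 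Your commutation remarks for the second propagator are fine: $\nabla$ and $P_N$ act componentwise on $(\Re f,\Im f)$ and commute with the Fourier-multiplier entries of \eqref{E:matrix prop}.
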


\begin{proof} Fix $N>0$.  By the Bernstein and H\"older inequalities,
$$
\|\nabla P_{\leq N}e^{it\Delta}f\|_{L_{t,x}^2(K)}\lesssim_K N\|e^{it\Delta} f\|_{L_{t,x}^{10}(I\times\R^3)}.
$$
By the local smoothing estimate for $e^{it\Delta}$ and Bernstein, we also have
$$
\|\nabla P_{>N}e^{it\Delta}f\|_{L_{t,x}^2(K)}\lesssim_K \|\vert\nabla\vert^{\frac12}P_{>N}f\|_{L_x^2}
\lesssim_K N^{-\frac12}\|\nabla f\|_{L_x^2}.
$$
Optimizing in the choice of $N$ yields the first estimate. 

To obtain the second estimate one argues in exactly the same way, making use of Lemma~\ref{L:VSmoothing}. \end{proof}

						\section{Global well-posedness in the energy space}
						\label{section:gwp}

In this section we discuss the well-posedness of \eqref{eq:cq3} in the energy space.   We begin by justifying the name `energy space' given to the set $\E$ defined in \eqref{eq:energyspace}.  Recall from the introduction that if $u\in\E$, then $|E(u)|<\infty$. The following two lemmas prove that if the energy of $u$ is finite, then $u\in \E$; when $\gamma\in(0,\frac23)$, this requires an additional smallness condition.

\begin{lemma}\label{lemma:coercive1}
If $\gamma\in(\tfrac23,1)$ and $E(u)<\infty$, then $u\in\E$ with $\|u\|_{\E}^2\lesssim E(u).$

If $\gamma=\tfrac23$ and $E(u)<\infty$, then $u\in\E$ with
	\begin{equation}
	\nonumber
	\|\nabla u\|_{L_x^2}^2\lesssim E(u)\qtq{and} \|q\|_{L_x^2}^2\lesssim E(u)+[E(u)]^3.
	\end{equation}
\end{lemma}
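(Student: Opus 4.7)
The crux of the argument is the pointwise identity $q(u)+1 = |1+u|^2 \geq 0$, which limits how negative the density fluctuation $q$ can be. Combining this with the algebraic decomposition
$$ \tfrac{\gamma}{4} q^2 + \tfrac16 q^3 = \bigl(\tfrac{\gamma}{4} - \tfrac16\bigr) q^2 + \tfrac16 q^2 (1+q), $$
in which the second summand is pointwise non-negative, allows us to cleanly isolate the positive and potentially negative parts of the energy density. Substituting into \eqref{eq:ham2} yields the working identity
$$ E(u) = \tfrac12 \|\nabla u\|_{L_x^2}^2 + \bigl(\tfrac{\gamma}{4} - \tfrac16\bigr) \int q^2\,dx + \tfrac16 \int q^2 (1+q)\,dx. $$

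For $\gamma \in (\tfrac23, 1)$, the coefficient $\tfrac{\gamma}{4} - \tfrac16$ is strictly positive, so all three terms on the right are non-negative. As $E(u)<\infty$, each term is individually finite, yielding $\nabla u \in L_x^2$ and $q \in L_x^2$ with the quantitative bound $\|u\|_{\E}^2 \lesssim E(u)$.

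For $\gamma = \tfrac23$ the middle term vanishes, leaving
$$ E(u) = \tfrac12 \|\nabla u\|_{L_x^2}^2 + \tfrac16 \int q^2(1+q)\,dx, $$
whose non-negativity immediately gives $\|\nabla u\|_{L_x^2}^2 \leq 2 E(u)$ and $\int q^2(1+q)\,dx \leq 6 E(u)$. Sobolev embedding then produces $\|u\|_{L_x^6}^6 \lesssim E(u)^3$. To recover the quadratic bound on $q$, the plan is to split $\R^3 = A \cup B$ with $A := \{1+q \geq \tfrac12\}$ and $B := \{1+q < \tfrac12\}$. On $A$, the bound $q^2 \leq 2 q^2(1+q)$ gives $\int_A q^2\,dx \lesssim E(u)$. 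On $B$, the condition $1+q = |1+u|^2 < \tfrac12$ forces $|u| > 1 - 1/\sqrt 2$, so Chebyshev's inequality and Sobolev produce
$$ |B| \leq \bigl|\{|u| > 1 - 1/\sqrt 2\}\bigr| \lesssim \|u\|_{L_x^6}^6 \lesssim E(u)^3. $$
Since $-1 \leq q < -\tfrac12$ on $B$ we have $q^2 \leq 1$ there, so $\int_B q^2\,dx \leq |B| \lesssim E(u)^3$. Adding the two contributions delivers $\|q\|_{L_x^2}^2 \lesssim E(u) + E(u)^3$.

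The only genuine obstacle is in the critical case $\gamma = \tfrac23$: the coercive grip of the combined quadratic-cubic integrand degenerates exactly on the set where $|1+u|$ is small, and to control this region one must exploit the $\dot H^1_x$-bound already obtained from the gradient term, converting it via Sobolev into a measure bound. Everything else is algebraic manipulation of the energy density.
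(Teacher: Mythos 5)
Your proof is correct and follows essentially the same route as the paper: the same use of $q\geq -1$ (equivalently $1+q=|1+u|^2\geq 0$) to get coercivity for $\gamma>\tfrac23$, and for $\gamma=\tfrac23$ the same split at $q=-\tfrac12$ with Chebyshev plus Sobolev on the bad set. The only cosmetic difference is that you apply Chebyshev to $|u|$ whereas the paper applies it to $u_1=\Re u$ (using $q<-\tfrac12\Rightarrow|u_1|>\tfrac14$); both give the same $[E(u)]^3$ bound.
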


\begin{proof}
When $\gamma>\tfrac23$ we use the fact that $q\geq-1$ in \eqref{eq:ham2} to write
	$$E(u)\geq\tfrac12\int\vert\nabla u\vert^2\,dx+\tfrac\gamma4\int \big(1-\tfrac{2}{3\gamma}\big)q^2\,dx,$$
which immediately implies the result.

We now turn to the case when $\gamma=\tfrac23$.  In this case, the energy takes the form
	$$E(u)=\tfrac12\int\vert\nabla u\vert^2\,dx+\tfrac16\int q^2(q+1)\,dx.$$ 
As $q\geq-1$, we have $q^2(q+1)\geq 0$. Thus $u\in\dot{H}_x^1(\R^3)$ and $\|\nabla u\|_{L_x^2}^2\lesssim E(u).$  

To estimate the $L^2_x$-norm of $q$ we note that
	$$\int_{\{q\geq -\frac12\}}q^2\,dx\leq2\int q^2(q+1)\,dx\lesssim E(u).$$ 
On the other hand, if $q<-\frac12$ then $|u_1|>\frac14$; thus, by Chebyshev's inequality and Sobolev embedding,
	$$\int_{\{q<-\frac12\}} q^2\,dx\leq 4^{6}\|u_1\|_{L_x^6}^{6}\lesssim\|\nabla u \|_{L_x^2}^6\lesssim [E(u)]^3.$$ 
	
This completes the proof of Lemma~\ref{lemma:coercive1}.
\end{proof} 

We next consider the full range $\gamma\in(0,1)$. In this case, we can guarantee coercivity of the energy under an appropriate smallness assumption.

\begin{lemma}\label{lemma:coercive2} For any $\gamma\in(0,1)$ there exists $\delta_\gamma>0$ so that the following hold:
\begin{SL}
\item If $E(u)<\infty$ and $\|\nabla u_1\|_{L_x^2}^2\leq\delta_\gamma,$ then $u\in\E$ with $\|u\|_{\E}^2\lesssim E(u)$.
\item For any ball $B$,
\begin{align}\label{EoutsideB}
\|\nabla u_1\|_{L_x^2(\R^3)}^2\leq\delta_\gamma \implies \int_{B^c} \tfrac12|\nabla u|^2 + \tfrac\gamma4 q^2 + \tfrac16 q^3\,dx \geq 0.
\end{align}
\item If $u:I\times\R^3\to \C$ is a solution to \eqref{eq:cq3} with $E(u)\leq \frac14 \delta_\gamma$ and $\|\nabla \Re u(t_0)\|_{L_x^2}^2\leq\delta_\gamma$ for some $t_0\in I$, then
$$
\|\nabla \Re u\|_{L_t^\infty L_x^2(I\times\R^3)}^2\leq\delta_\gamma \qtq{and} \|u\|_{L^\infty (I;\E)}^2\lesssim E(u).
$$
\end{SL}
\end{lemma}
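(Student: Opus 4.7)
The plan is to base everything on the observation that the potential energy density
\[
f(q) := \tfrac{\gamma}{4}q^2 + \tfrac{1}{6}q^3 = \tfrac{q^2}{12}(3\gamma+2q)
\]
is nonnegative exactly on $\{q\geq-3\gamma/2\}$ and satisfies $f(q)\geq \tfrac{\gamma}{12}q^2$ on $\{q\geq -\gamma\}$, while the ``bad'' region $\{q<-\gamma\}$ is controlled by $\|\nabla u_1\|_{L^2}$. Indeed, $q<0$ forces $u_1 = (q-|u|^2)/2<0$ with $|u_1|\geq |q|/2$, so on $\{q<-\gamma\}$ we have $|u_1|>\gamma/2$ and hence
\[
\mathbf{1}_{\{q<-\gamma\}} \leq (2/\gamma)^6 u_1^6.
\]
Combining with $|q|\leq 1$ yields the pointwise bounds $q^2\mathbf{1}_{\{q<-\gamma\}} + |f(q)|\mathbf{1}_{\{q<-\gamma\}} \lesssim_\gamma u_1^6$, and more generally $f(q)\geq -C_\gamma u_1^6$ globally (checking the cubic polynomial by elementary calculus). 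The critical Sobolev embedding then gives
\[
\int u_1^6\,dx \lesssim \|\nabla u_1\|_{L^2}^6 \leq \delta_\gamma^2\|\nabla u_1\|_{L^2}^2 \leq \delta_\gamma^2\|\nabla u\|_{L^2}^2,
\]
which is a small perturbation of $\tfrac12 \|\nabla u\|_{L^2}^2$ once $\delta_\gamma$ is small.

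For part (i), I would combine these ingredients to get
\[
E(u) \geq \tfrac12\|\nabla u\|_{L^2}^2 + \tfrac{\gamma}{12}\|q\mathbf{1}_{\{q\geq -\gamma\}}\|_{L^2}^2 - C_\gamma \delta_\gamma^2 \|\nabla u\|_{L^2}^2,
\]
absorb the last term into the first by choosing $\delta_\gamma$ small, and then bound the leftover piece $\|q\mathbf{1}_{\{q<-\gamma\}}\|_{L^2}^2 \lesssim_\gamma \delta_\gamma^2\|\nabla u\|^2 \lesssim_\gamma E(u)$ using the same Sobolev input. Summing gives $\|u\|_{\E}^2\lesssim_\gamma E(u)$.

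For part (ii), I would integrate the pointwise inequality $f(q)\geq -C_\gamma u_1^6$ over $B^c$, reducing the question to whether
\[
\int_{B^c} \tfrac12|\nabla u_1|^2\,dx \geq C_\gamma \int_{B^c} u_1^6\,dx.
\]
I expect this to follow from an exterior-domain version of the $\dot H^1\hookrightarrow L^6$ embedding, with a scale-invariant constant thanks to the homogeneity of both sides, together with the smallness $\|\nabla u_1\|_{L^2}^2\leq \delta_\gamma$ providing the necessary $\delta_\gamma^2$ factor. This localization of the critical Sobolev inequality past $\partial B$ is the step I expect to be the main technical obstacle, since the naive global Sobolev bound only controls $\int_{B^c} u_1^6$ in terms of $\|\nabla u_1\|_{L^2(\R^3)}^2$, not $\|\nabla u_1\|_{L^2(B^c)}^2$.

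For part (iii), I would run a continuity/bootstrap argument. Since $u\in C(I;\E)$, $t\mapsto \|\nabla u_1(t)\|_{L^2}^2$ is continuous. Whenever $\|\nabla u_1(t)\|_{L^2}^2\leq \delta_\gamma$, part (i) applies at time $t$ and, using conservation of energy $E(u(t))=E(u(t_0))\leq \tfrac14\delta_\gamma$, gives $\|\nabla u(t)\|_{L^2}^2 \lesssim_\gamma E(u)\leq \tfrac14\delta_\gamma$. Choosing $\delta_\gamma$ small enough (possibly shrinking it beyond the threshold in (i)) makes this strictly less than $\delta_\gamma$, so the set $\{t\in I:\|\nabla u_1(t)\|_{L^2}^2\leq\delta_\gamma\}$ is both open and closed in $I$ and contains $t_0$; hence it equals $I$. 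The bound $\|u\|_{L^\infty(I;\E)}^2\lesssim E(u)$ then follows by applying (i) at each $t$ and using energy conservation.
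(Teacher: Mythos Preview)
Your proposal is correct and follows essentially the same line as the paper's proof: split the potential energy density $\tfrac\gamma4 q^2+\tfrac16 q^3$ into a coercive part and a remainder supported on $\{q<-c\gamma\}$, use $|u_1|\gtrsim\gamma$ there together with Chebyshev and Sobolev embedding to bound the bad contribution by $C_\gamma\|\nabla u_1\|_{L^2}^6$, and absorb this into the kinetic energy for $\delta_\gamma$ small; part (iii) is then a continuity argument. The paper uses the threshold $q<-\tfrac{3\gamma}{4}$ and the decomposition $\tfrac\gamma4 q^2+\tfrac16 q^3=\tfrac\gamma8 q^2+\tfrac16 q^2(q+\tfrac{3\gamma}{4})$, but this is cosmetic.

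Regarding your flagged obstacle in (ii): the paper simply asserts that ``Sobolev embedding holds in the exterior of any ball $B$'' and moves on. Your concern is well-placed, but the inequality $\|f\|_{L^6(B^c)}\leq C\|\nabla f\|_{L^2(B^c)}$ does hold with $C$ independent of $B$. One clean way to see this: by scaling it suffices to treat $B=B_1$, and then extend $f$ from $\{|x|>1\}$ to $\R^3$ by inversion, $\tilde f(y)=f(y/|y|^2)$ for $0<|y|<1$. The Jacobian computation gives $\|\nabla\tilde f\|_{L^2(B_1)}\leq\|\nabla f\|_{L^2(B_1^c)}$, so the global Sobolev inequality applied to $\tilde f$ yields the exterior one. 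With this in hand your reduction $\int_{B^c}\tfrac12|\nabla u_1|^2\geq C_\gamma\int_{B^c}u_1^6$ follows from $\|\nabla u_1\|_{L^2(B^c)}^2\leq\|\nabla u_1\|_{L^2(\R^3)}^2\leq\delta_\gamma$ exactly as you outline.
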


\begin{proof}
We begin by writing
	\begin{align*}E(u)&=\int\tfrac12\vert\nabla u\vert^2+\tfrac{\gamma}{8}q^2+\tfrac16q^2\big(q+\tfrac{3\gamma}{4}\big)\,dx
	\\ &\geq\int\tfrac12\vert\nabla u\vert^2+\tfrac{\gamma}{8}q^2\,dx+\int_{\{ q<-\frac{3\gamma}{4}\}}\tfrac16 q^2\big(q+\tfrac{3\gamma}{4}\big)\,dx.
	\end{align*}
For $q< -\frac{3\gamma}4$ we have $|u_1|>\frac{3\gamma}8$.  Thus, by Chebyshev's inequality and Sobolev embedding, we have
	$$\vert\{q<-\tfrac{3\gamma}{4}\}\vert\leq(\tfrac{8}{3\gamma})^6\|u_1\|_{L_x^6}^6\lesssim \gamma^{-6}\|\nabla u_1\|_{L_x^2}^6.$$ 
Recalling that $q\geq -1$, we find that for $\|\nabla u_1\|_{L_x^2}^2\ll\gamma^{3/2}$ we have
	$$\bigg\vert\int_{\{q<-\frac{3\gamma}{4}\}}\tfrac16q^2\big(q+\tfrac{3\gamma}{4}\big)\,dx\bigg\vert
	\lesssim \gamma^{-6}\|\nabla u_1\|_{L_x^2}^6\leq\tfrac14\|\nabla u_1\|_{L_x^2}^2.$$
Thus
$$
E(u)\geq\int\tfrac 14\vert\nabla u\vert^2+\tfrac{\gamma}{8}q^2\,dx,
$$
which yields conclusion (i) of the lemma.  Claim (iii) also follows from this and a continuity argument. 

To obtain (ii), we repeat the argument above, using the fact that Sobolev embedding holds in the exterior of any ball $B$.
\end{proof}

We next turn to the question of global well-posedness for \eqref{eq:cq3} with initial data $u_0\in\E$. From the lemmas above we see that $u(t)\in\E$ and $\|\nabla u(t)\|_{L_x^2}^2\lesssim E(u_0)$ for all times of existence, whenever $(1)$ $\gamma\in[\tfrac23,1)$ or $(2)$ $\gamma\in(0,\tfrac23)$ and $E(u_0)$ and $\|\nabla\Re u_0\|_{L_x^2}$ are sufficiently small.  This \emph{a priori} bound on $\|\nabla u(t)\|_{L_x^2}$ allows us to treat \eqref{eq:cq3} as a perturbation of the defocusing energy-critical NLS, which was proven to be globally wellposed with finite space-time bounds in \cite{CKSTT:gwp}.  See also \cite{KOPV, Matador} for similar perturbative arguments.

\begin{theorem}[Global well-posedness and unconditional uniqueness]\label{thm:gwp} 

For $\\ \gamma\in[\tfrac23,1)$ and  $u_0\in\E$, there exists a unique global solution $u\in C(\R;\E)$ to \eqref{eq:cq3}. 

For $\gamma\in(0,\frac23)$, if $u_0\in\E$ satisfies $\|\nabla\Re u_0\|_{L_x^2}^2\leq\delta_\gamma$ and $E(u_0)\leq\tfrac14\delta_\gamma$, then there exists a unique global solution $u\in C(\R;\E)$ to \eqref{eq:cq3}.  Here $\delta_\gamma$ is as in Lemma~\ref{lemma:coercive2}.

In both cases the solution remains uniformly bounded in $\E$ and for any $T>0$,
$$
\|u\|_{\dot S^1([-T,T])}\lesssim_T 1.
$$
\end{theorem}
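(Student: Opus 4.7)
\medskip

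\noindent\textbf{Proof proposal.} The plan is to treat \eqref{eq:cq3} as a perturbation of the defocusing energy-critical NLS \eqref{E:gopher}, whose global well-posedness with polynomial space-time bounds was established in \cite{CKSTT:gwp}. The first ingredient is an \emph{a priori} $\dot H^1_x$ bound on any $\E$-valued solution supplied by energy conservation: Lemma~\ref{lemma:coercive1} gives this directly when $\gamma \in [\tfrac23,1)$, while in the small-data regime $\gamma \in (0,\tfrac23)$ one combines Lemma~\ref{lemma:coercive2}(i) with the bootstrap in Lemma~\ref{lemma:coercive2}(iii) to conclude that $\|\nabla \Re u(t)\|_{L^2_x}^2$ stays below $\delta_\gamma$ for all times of existence. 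In both cases one obtains
$$
\sup_{t} \bigl[\|\nabla u(t)\|_{L^2_x}^2 + \|q(u(t))\|_{L^2_x}^2\bigr] \lesssim E(u_0) + E(u_0)^3.
$$

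For the existence portion, I would work with the Duhamel formula based on the Schr\"odinger propagator $e^{it\Delta}$, treating the linear term $2\gamma u_1$ together with $N(u)$ as the inhomogeneity, and running the standard energy-critical contraction in $\dot S^1(I)$. Local existence on a short interval follows from Strichartz, the fractional chain rule, and the Coifman--Meyer-type lemma, with the subcritical pieces $2\gamma u_1, N_2, N_3, N_4$ controlled using the uniform $\dot H^1_x$-bound and powers of the interval length $|I|$. To go global, I would fix $T>0$, invoke \cite{CKSTT:gwp} to produce a reference solution $\tilde u$ to \eqref{E:gopher} with $\tilde u(0)=u_0$ and $\|\tilde u\|_{L^{10}_{t,x}(\mathbb{R}\times\mathbb{R}^3)} \leq C(E(u_0))$, and partition $[-T,T]$ into finitely many subintervals $I_j$ on each of which $\|\tilde u\|_{L^{10}_{t,x}(I_j\times\mathbb{R}^3)}$ is sufficiently small. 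Since the critical quintic piece in $N$ exactly matches the nonlinearity of \eqref{E:gopher}, the perturbative error is
$$
(i\partial_t+\Delta)\tilde u - [2\gamma \tilde u_1 + N(\tilde u)] = -2\gamma\tilde u_1 - N_2(\tilde u)-N_3(\tilde u)-N_4(\tilde u),
$$
which is entirely $\dot H^1_x$-subcritical. On each $I_j$ its $\dot N^1$-norm is controlled by $|I_j|$ and the a priori $\dot H^1_x$-bound, hence can be made arbitrarily small by further subdividing; a stability/perturbation lemma of the type proved in \cite{KOPV, Matador, Zhang} then extends $u$ from $I_{j-1}$ to $I_j$, and iterating finitely many times yields the $\dot S^1([-T,T])$ bound.

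For the unconditional uniqueness claim, the natural strategy is to take two solutions $u,v\in C([-T,T];\E)$ with $u(0)=v(0)$ and estimate $w=u-v$ in $\dot S^1(I)$ on a short subinterval $I$, exploiting the difference identity $N_5(u)-N_5(v) = \text{\O}(|u|^4+|v|^4)w$ and analogous formulas for $N_2,\ldots,N_4$, together with Strichartz and the uniform $\dot H^1$-bound on both $u$ and $v$. The delicate point is that being in $C_t\E$ does not, on its face, give $u,v\in \dot S^1$, and energy-critical scaling prevents one from simply absorbing the quintic-difference term without such control. My plan here is a bootstrap: first show via the Strichartz estimate of Lemma~\ref{L:VStrichartz} and the smoothing corollary \ref{C:smoothing} that any $C_t\E$ solution is automatically in $\dot S^1$ on sufficiently short intervals (approximating $u_0$ by smoother data and using continuous dependence backwards), and then close the difference estimate via Gronwall.

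The main obstacle, as above, is the unconditional uniqueness, since the critical scaling of $N_5$ leaves no room for losses; the argument must carefully balance the subcritical gains from $N_2, N_3, N_4$ and the smoothing-type estimate against the critical quintic difference, and it must be compatible with the lack of \emph{a priori} space-time integrability in the energy class $\E$.
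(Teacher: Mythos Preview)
Your existence argument is essentially the paper's: treat \eqref{eq:cq3} as a subcritical perturbation of the defocusing quintic NLS, use the \emph{a priori} $\dot H^1_x$ bound from Lemmas~\ref{lemma:coercive1}--\ref{lemma:coercive2}, and invoke \cite{CKSTT:gwp} together with a stability lemma as in \cite{KOPV,Matador,Zhang}. That part is fine.

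The gap is in your uniqueness strategy. Working at the $\dot S^1$ level forces you to control $\nabla[N_5(u)-N_5(v)]$, which in turn requires space-time bounds like $\|\nabla v\|_{L^{10}_tL^{30/13}_x}$ on the \emph{arbitrary} $C_t\E$ solution $v$. Your proposed fix---``show any $C_t\E$ solution is automatically in $\dot S^1$ on short intervals by approximating the data and using continuous dependence backwards''---is circular: continuous dependence in the energy-critical setting is exactly what requires $\dot S^1$ control, and approximating the data produces a different solution unless you already know uniqueness. Corollary~\ref{C:smoothing} is a statement about the linear flow and does not give nonlinear $\dot S^1$ bounds.

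The paper avoids this entirely by working at the $L^2_x$ Strichartz level for the difference $w=\tilde u-u$, where $u$ is the \emph{constructed} solution (hence $u\in L^{10}_{t,x}$) and $\tilde u\in C(I;\E)$ is arbitrary. Two observations drive the argument: on a short interval one has both $\|u\|_{L^{10}_{t,x}}\leq\eta$ (from the space-time bounds on $u$) and $\|w\|_{L^\infty_t\dot H^1_x}\leq\eta$ (from $w(0)=0$ and continuity). The quintic difference is handled via
\[
\|wu^4\|_{L^{10/9}_tL^{30/17}_x}\lesssim \|u\|_{L^{10}_{t,x}}^4\|w\|_{L^2_tL^6_x}
\qquad\text{and}\qquad
\|w^5\|_{L^2_tL^{6/5}_x}\lesssim \|w\|_{L^\infty_tL^6_x}^4\|w\|_{L^2_tL^6_x},
\]
both of which carry a factor $\eta^4$. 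Crucially, one must first establish \emph{finiteness} of $\|w\|_{L^2_tL^6_x\cap L^4_tL^3_x\cap L^\infty_tL^2_x}$; this is where the full $\E$-structure enters, via $q(u),q(\tilde u)\in L^\infty_tL^2_x$ and $u_1,\tilde u_1\in L^\infty_t(L^3_x\cap L^6_x)$, which make the right-hand side of the Duhamel estimate finite. Only then can one absorb and conclude $w\equiv 0$. This is the idea you are missing.
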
 

\begin{remark}\label{R:unique}
When $\gamma\in(0,\tfrac23)$, smallness of the initial data is only exploited to prove global existence; the proof we present below guarantees uniqueness of any solution in $C(I;\E)$ on any time interval $I\subseteq \R$.
\end{remark}

\begin{proof}
As mentioned above, Lemmas~\ref{lemma:coercive1} and \ref{lemma:coercive2} imply that under the hypotheses of Theorem~\ref{thm:gwp} we have 
$\|\nabla u(t)\|_{L_x^2}^2\lesssim E(u_0)$ for all times $t$ of existence.  This allows us to treat \eqref{eq:cq3} as a perturbation of the defocusing energy-critical NLS.  Indeed, we may rewrite equation \eqref{eq:cq3} as follows:
$$
(i\partial_t+\Delta)u= |u|^4u + \mathcal R(u),
$$
where $\mathcal R(u)= 2\gamma \Re u + \sum_{j=2}^4N_j(u)$.  Observing that the `error' $\mathcal R(u)$ is energy-subcritical, one may argue as in \cite[Section~4.2]{KOPV} to construct a global solution $u\in C(\R;\E)\cap L^{10}_t \dot H^{1,\frac{30}{13}}_x(\R\times\R^3)$ to \eqref{eq:cq3}.  A key ingredient in this argument is the main result in \cite{CKSTT:gwp}, which guarantees that the defocusing energy-critical NLS is globally wellposed with finite $L^{10}_t \dot H^{1,\frac{30}{13}}_x(\R\times\R^3)$ norm.  We omit the details of this argument. Instead, we present the proof of uniqueness of solutions in the energy space, because the choice of energy space in this paper does not allow for a direct implementation of the methods in \cite[Section~4.3]{KOPV}.
					
Fix a compact time interval $I=[0,\tau]$ with $\tau>0$ small. Let $u\in C(\R;\E)\cap L^{10}_t \dot H^{1,\frac{30}{13}}_x(\R\times\R^3)$ be the solution to \eqref{eq:cq3} constructed via the perturbative argument described above.  Suppose $\tilde{u}\in C(I;\E)$ is another solution such that $\tilde{u}(0)=u(0).$ We wish to show that $u=\tilde{u}$ almost everywhere on $I\times\R^3$. 

To this end, we define $w=\tilde{u}-u$ and let $0<\eta<1$ be a small parameter to be determined below. As $w(0)=0$ and $w\in C(I;\dot{H}_x^1)$, we can choose $\tau$ small enough so that
\begin{align}\label{small1}
\|w\|_{L_t^\infty\dot{H}_x^1(I\times\R^3)}\leq\eta.
\end{align}
As $\nabla u\in L_t^{10}L_x^{\frac{30}{13}}(I\times\R^3)$, we may also use Sobolev embedding and choose $\tau$ possibly even smaller to guarantee that
\begin{align}\label{small2}
\|u\|_{L_{t,x}^{10}(I\times\R^3)}\leq\eta.
\end{align}				
We also note that as $u$ and $\tilde{u}$ are bounded in $\E$, we have that $q(u)$, $q(\tilde{u})$ are bounded in $L_x^2$; $u$, $\tilde{u}$ are bounded in $L_x^6$; and $u_1,$ $\tilde{u}_1$ are bounded in $L_x^3\cap L_x^6$.

We will first show that $w$ is bounded in Strichartz spaces on $I\times\R^3$. To see this, we write
$$
(i\partial_t+\Delta)w=2\gamma\tilde{u}_1+N(\tilde{u})-[2\gamma u_1+N(u)],
$$ 
where $N(u)$ is as in \eqref{eq:cq3}. We make use of $q(u)$ and $q(\tilde{u})$ to rewrite
\begin{align*}
(i\partial_t+\Delta)w&=O(|\tilde{u}|^5+|u|^5)+O(|\tilde{u}|^4+|u|^4)+O(|\tilde{u}|^3+|u|^3)
\\ &\quad+\gamma q(\tilde{u})+(2\gamma+4)\tilde{u}_1^2+2i\gamma \tilde{u}_1\tilde{u}_2
\\ &\quad-[\gamma q(u)+(2\gamma+4)u_1^2+2i\gamma u_1u_2].
\end{align*}
As $w(0)=0$, we can use Strichartz to estimate
\begin{align*}
\|w\|_{L_t^2 L_x^6}&+\|w\|_{L_t^4 L_x^3}+\|w\|_{L_t^\infty L_x^2}
\\ &\lesssim \|\tilde{u}^5\|_{L_t^2 L_x^{6/5}}+\|u^5\|_{L_t^2 L_x^{6/5}}
+\|\tilde{u}^4\|_{L_t^{4/3}L_x^{3/2}}+\|u^4\|_{L_t^{4/3}L_x^{3/2}}
\\ &\quad+\|\tilde{u}^3\|_{L_t^1 L_x^2}+\| u^3\|_{L_t^1 L_x^2}
+\|q(\tilde{u})\|_{L_t^1 L_x^2}+\|q(u)\|_{L_t^1 L_x^2}
\\ &\quad+\|\tilde{u}\tilde{u}_1\|_{L_t^1L_x^2}+\|uu_1\|_{L_t^1L_x^2},
\end{align*}
where all space-time norms are over $I\times\R^3$. Using H\"older, we find
\begin{align*}
&\|u^5\|_{L_t^2L_x^{6/5}}\lesssim \tau^{1/2}\|u\|_{L_t^\infty L_x^6}^5, 
& \|u^4\|_{L_t^{4/3}L_x^{3/2}}\lesssim \tau^{3/4}\|u\|_{L_t^\infty L_x^6}^4,
\\ &\|u^3\|_{L_t^1 L_x^2}\lesssim \tau\|u\|_{L_t^\infty L_x^6}, 
& \|q(u)\|_{L_t^1L_x^2}\lesssim\tau\|q(u)\|_{L_t^\infty L_x^2},
\\ &\|uu_1\|_{L_t^1 L_x^2}\lesssim\tau\|u\|_{L_t^\infty L_x^6}\|u_1\|_{L_t^\infty L_x^3},
\end{align*}
and we can estimate similarly for $\tilde u$. Thus we conclude
\begin{align} \label{eq:w is finite}
\|w\|_{L_t^2 L_x^6}+\|w\|_{L_t^4 L_x^3}+\|w\|_{L_t^\infty L_x^2}<\infty.
\end{align}
	
We will show that in fact,
	\begin{equation}\label{eq:w is zero}\|w\|_{L_t^2 L_x^6}+\|w\|_{L_t^4 L_x^3}+\|w\|_{L_t^\infty L_x^2}=0,
	\end{equation}
which implies $w=0$ almost everywhere, as desired. To this end, we again rewrite the equation for $w$, using $z$ to indicate that either $w$ or $u$ may appear. We have
	$$(i\partial_t+\Delta)w=O(|w| |u|^4+|w|^5+|w| |z|^3+|w| |z|^2+|w| |z|+|w|).$$ 
We now use Strichartz, \eqref{small1} and \eqref{small2} to estimate
	\begin{align*}
	\|w\|_{L_t^2 L_x^6}&+\|w\|_{L_t^4 L_x^3}+\|w\|_{L_t^\infty L_x^2} 
	\\ &\lesssim \|wu^4\|_{L_t^{10/9}L_x^{30/17}}+\|w^5\|_{L_t^2 L_x^{6/5}}+\|wz^3\|_{L_t^2L_x^{6/5}}
	+\|wz^2\|_{L_t^{4/3}L_x^{3/2}}
	\\ &\quad+\|wz\|_{L_t^1L_x^2}+\|w\|_{L_t^1L_x^2}
	\\ &\lesssim \|u\|_{L_{t,x}^{10}}^4\|w\|_{L_t^2L_x^6}
	+\|w\|_{L_t^\infty L_x^6}^4\|w\|_{L_t^2L_x^6}
	+\tau^{1/4}\|z\|_{L_t^\infty L_x^6}^3\|w\|_{L_t^4L_x^3}
	\\ &\quad+\tau^{1/2}\|z\|_{L_t^\infty L_x^6}^2\|w\|_{L_t^4L_x^3}+ \tau^{3/4}\|z\|_{L_t^\infty L_x^6}\|w\|_{L_t^4L_x^3}
	+\tau\|w\|_{L_t^\infty L_x^2}
	\\ &\lesssim \eta^4\|w\|_{L_t^2L_x^6}+\tau^{1/4}\|w\|_{L_t^4L_x^3}+\tau\|w\|_{L_t^\infty L_x^2}.
	\end{align*}
Choosing $\eta,\tau$ sufficiently small and using \eqref{eq:w is finite}, we conclude that \eqref{eq:w is zero} holds and so $u=\tilde{u}$ almost everywhere on $I\times\R^3$.  As uniqueness is a local property, this yields uniqueness in the energy space for solutions to \eqref{eq:cq3}.
\end{proof}

Next we develop a stability theory for \eqref{eq:cq3}, which we will need in Section~\ref{section:weak}.

\begin{proposition}[Stability theory]\label{P:Stab}
Fix $T>0$ and let $\tilde u:[-T, T]\times\R^3\to \C$ be a solution to
the perturbed equation
$$
(i\partial_t+\Delta-2\gamma \Re  )\tilde u = N(\tilde u) +e
$$
for some function $e$.  Suppose that
\begin{align}\label{E:Lbound}
\|\tilde u\|_{L_t^\infty \dot H^1_x([-T, T]\times\R^3)} + \|\nabla
\tilde u\|_{L_t^{10} L_x^{\frac{30}{13}}([-T, T]\times\R^3)} \leq L
\end{align}
for some constant $L>0$.  Let $u_0\in \dot H_x^1(\R^3)$ and assume that
\begin{align}\label{small error}
\|\tilde u(0) - u_0\|_{\dot H^1_x} + \Bigl\|\int_0^t e^{i(t-s)\Delta}
\nabla e(s)\, ds \Bigr\|_{L_t^\infty L_x^2 \cap L_{t,x}^{\frac{10}{3}}([-T, T]\times\R^3)}\leq \eps
\end{align}
for some $\eps\leq \eps_0(L,T)$.  Then for $\eps_0(L,T)$ sufficiently small there exists a solution $u:[-T, T]\times\R^3\to \C$ to \eqref{eq:cq3} with data $u(0)=u_0$
and
\begin{align}
\|\nabla(\tilde u-u)\|_{L_t^\infty L_x^2 \cap L_{t,x}^{\frac{10}{3}}([-T, T]\times\R^3)}\leq C(L,T) \eps \label{E:S1close}\\
\|u\|_{\dot S^1([-T, T])}\leq C(L,T). \label{E:uS1}
\end{align}
\end{proposition}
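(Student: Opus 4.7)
The plan is a standard stability argument for the energy-critical cubic-quintic NLS, treating the lower-order terms $2\gamma\Re u + \sum_{j=2}^4 N_j(u)$ as perturbations that are harmless on a bounded time interval. First, using \eqref{E:Lbound} together with Sobolev embedding, I would partition $[-T,T]$ into $J=J(L,\eta)$ subintervals $I_1,\dots,I_J$ on which
$$\|\tilde u\|_{L_{t,x}^{10}(I_j\times\R^3)} + \|\nabla\tilde u\|_{L_t^{10} L_x^{30/13}(I_j\times\R^3)} \leq \eta,$$
where $\eta=\eta(L,T)>0$ is a small parameter to be fixed later. Setting $w:=u-\tilde u$, the difference satisfies
$$(i\partial_t+\Delta)w - 2\gamma\Re w = \bigl[N(\tilde u+w) - N(\tilde u)\bigr] - e,$$
with initial datum $w(0) = u_0 - \tilde u(0) = O(\eps)$ in $\dot H^1_x$ on the subinterval containing $0$, and data inherited from the previous step on subsequent subintervals.

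On each $I_j$ I would run a bootstrap argument controlling the Strichartz output $\|\nabla w\|_{L_t^\infty L_x^2 \cap L_{t,x}^{10/3}(I_j\times\R^3)}$. Applying Lemma~\ref{L:VStrichartz} in Duhamel form (which absorbs the linear shift $2\gamma\Re w$ at the price of a $T$-dependent constant), together with the fractional chain rule, reduces matters to estimating the nonlinear difference. Splitting
$$N(\tilde u+w)-N(\tilde u) = \text{\O}\bigl(|\tilde u|^4 w + |w|^5\bigr) + \sum_{k=1}^{3}\text{\O}\bigl((|\tilde u|^k + |w|^k)\,w\bigr),$$
the energy-critical piece contributes at most $(\eta^4 + \|\nabla w\|_{\dot S^0(I_j)}^4)\|\nabla w\|_{\dot S^0(I_j)}$, exactly as in the stability proof for \eqref{E:gopher}. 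Each subcritical piece carries a positive power of $|I_j|$ after H\"older in time, combined with the uniform $\dot H^1_x$ (hence $L^6_x$) bound from \eqref{E:Lbound} and the bootstrap hypothesis on $w$; it can therefore be absorbed by taking $J$ large. Closing the bootstrap for $\eta$ small depending on $L$ and $T$ yields
$$\|\nabla w\|_{L_t^\infty L_x^2 \cap L_{t,x}^{10/3}(I_j\times\R^3)} \leq 2\bigl(\|\nabla w(t_{j-1})\|_{L_x^2} + \eps\bigr),$$
so the error at most doubles across each subinterval.

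Iterating over $j=1,\dots,J$ gives \eqref{E:S1close} with $C(L,T)=2^{J+1}$, provided $\eps_0(L,T)$ is taken small enough that the bootstrap hypothesis propagates throughout. The bound \eqref{E:uS1} on the full $\dot S^1$ norm of $u$ follows by writing $u=\tilde u+w$, using \eqref{E:Lbound} for $\tilde u$, and running Strichartz (via Lemma~\ref{L:VStrichartz}) once more on $u$ to upgrade from the single pair $(10/3,10/3)$ to all admissible pairs. The main technical obstacle is balancing the subdivision count $J$, which is forced by the energy-critical term and grows with $L$, against the per-step doubling constant, while ensuring that the subcritical and linear contributions -- which pick up positive powers of $|I_j|$ but also depend on $L$ and $T$ -- remain genuinely subleading. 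This balance is achievable because $T$ is fixed throughout and the final constants are permitted to depend on both $L$ and $T$.
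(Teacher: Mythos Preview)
Your argument is essentially the paper's: partition, bootstrap on each piece, iterate. Two small points of divergence are worth flagging. First, your partition criterion uses only the smallness of $\|\tilde u\|_{L_{t,x}^{10}}+\|\nabla\tilde u\|_{L_t^{10}L_x^{30/13}}$ on $I_j$, whereas the paper additionally imposes $|I_j|\leq\eta$; you need the latter (or an equivalent further subdivision) for the subcritical terms to genuinely pick up a small factor of $|I_j|^{(5-k)/4}$, and your phrase ``absorbed by taking $J$ large'' should be made into exactly this. Second, you invoke Lemma~\ref{L:VStrichartz} to absorb the $2\gamma\Re w$ term into the linear flow, while the paper instead runs Duhamel with the free Schr\"odinger propagator $e^{it\Delta}$ and treats $2\gamma\Re w$ as a forcing term, producing an extra $|J|\,\|w\|_{L_t^\infty\dot H^1_x}$ that is harmless once $|J|\leq\eta$. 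The paper's route is slightly more direct here because the smallness hypothesis \eqref{small error} is already phrased in terms of $e^{i(t-s)\Delta}$; if you use the $V^{-1}e^{-itH}V$ propagator you must still relate your Duhamel error term back to the one appearing in \eqref{small error}, which is doable via \eqref{HvsDelta} but adds a step. Either way the argument closes.
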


\begin{proof}
The existence of the solution $u$ on a small neighborhood of $t=0$
follows from the arguments described in Theorem~\ref{thm:gwp}.  In
that setting, the solution could be extended globally due to energy
control.  That argument does not apply here as $u_0\in \dot H^1_x$ by
itself does not guarantee finiteness of the energy; furthermore, we
permit here large data even when $\gamma<\frac23$, in which case the
energy need not be coercive.  However, these earlier arguments do show
that if a solution should blow up in finite time, then the $\dot S^1$-norm must diverge.  
Consequently, we can prove that the solution
exists and obeys \eqref{E:S1close} and \eqref{E:uS1} on the whole interval $[-T,T]$ by
showing that it obeys \eqref{E:S1close} and \eqref{E:uS1} on any subinterval $0\ni
I\subseteq [-T,T]$ on which it does exist.  This is what we do.

For brevity, we define the following norm: given a time interval $[a,b]\subset \R$,
$$
\|u\|_{Y([a, b])}:= \|\nabla u\|_{L_t^\infty L_x^2 \cap L_{t,x}^{\frac{10}{3}}([a, b]\times\R^3)}.
$$

Given $0<\eta<1$ to be chosen later, we divide $I$ into intervals $J$ where
\begin{align}\label{302}
|J| \leq \eta \qtq{and} \|\nabla \tilde
u\|_{L_t^{10}L^{\frac{30}{13}}_x (J\times\R^3)} \leq \eta.
\end{align}
The number $K$ of such intervals depends only on $L$, $T$, and $\eta$.  Below we will show that for $\eta$ sufficiently small,
\begin{equation}\label{keyimplication}
\inf_{t_0\in J} \|\tilde u(t_0)-u(t_0)\|_{\dot H^1_x} \leq \eta
\implies \|\tilde u-u\|_{Y(J)}\leq A \inf_{t_0\in J} \|\tilde
u(t_0)-u(t_0)\|_{\dot H^1_x}
\end{equation}
for some absolute constant $A$ on such intervals $J$.  Iterating this completes the proof of \eqref{E:S1close} and yields constants
$$
\eps_0 = A^{-K(L,T,\eta)} \eta \qtq{and} C(L,T) = K(L,T,\eta) A^{K(L,T,\eta)}.
$$

We now verify \eqref{keyimplication}. Writing $u=\tilde u +v$, we use Strichartz and \eqref{small error} to estimate
\begin{align*}
\|v\|_{Y(J)} \lesssim \inf_{t_0\in J} \|v(t_0)\|_{\dot H^1_x}
+ \| \nabla [N(\tilde u+v) - N(\tilde u)] \|_{\dot N^0(J)} + |J|
\|v\|_{L^\infty_t \dot H^1_x} +\eps,
\end{align*}
where $N(\cdot)$ denotes the nonlinearity, as in \eqref{eq:cq3}.  Moreover,
\begin{align*}
\| \nabla [N(\tilde u+v) - N(\tilde u)] \|_{\dot N^0(J)} &\lesssim \|\nabla \tilde u\|_{L^{10}_t L^{\frac{30}{13}}_{x\vphantom{t}}} \|v\|_{L^{10}_{t,x}\vphantom{1^{\frac11}_1}}
	\sum_{k=2}^{5} |J|^{\frac{5-k}{4}} \Bigl( \|\tilde u\|_{L^{10}_{t,x}}^{k-2} + \|v\|_{L^{10}_{t,x}}^{k-2}\Bigr) \\
&\quad + \|\nabla v\|_{L^{10}_t L^{\!\frac{30}{13}}_{x\vphantom{t}}}
	\sum_{k=2}^{5} |J|^{\frac{5-k}{4}} \Bigl( \|\tilde u\|_{L^{10}_{t,x}}^{k-1} + \|v\|_{L^{10}_{t,x}}^{k-1}\Bigr),
\end{align*}
where all space-time norms are over $J\times\R^3$.  Using Sobolev embedding and \eqref{302}, we therefore obtain
\begin{align*}
\|v\|_{Y(J)}\lesssim \inf_{t_0\in J} \|v(t_0)\|_{\dot H^1_x} +\sum_{k=1}^5 \eta^{\frac{5-k}4} \|v\|_{Y(J)}^k +\eps.
\end{align*}
Choosing $\eta$ sufficiently small, a simple bootstrap argument yields \eqref{keyimplication}.

Using the fact that $u$ is a solution to \eqref{eq:cq3}, a further application of the Strichartz inequality gives \eqref{E:uS1}.
\end{proof}

We also record the following corollary.

\begin{corollary}[Small-data space-time bounds]\label{C:small}
Given $T>0$ there exists $\eta(T)>0$ such that 
$$
\text{if} \quad \|u_0\|_{\dot H^1_x}\leq \eta(T) \qtq{then} \|u\|_{\dot S^1([-T,T])}\lesssim_T \|u_0\|_{\dot H^1_x},
$$
where $u$ denotes the solution to \eqref{eq:cq3} with data $u(0)=u_0$.
\end{corollary}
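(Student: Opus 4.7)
The plan is to invoke Proposition~\ref{P:Stab} with the trivial comparison solution $\tilde u \equiv 0$ and vanishing error $e \equiv 0$, so that $L=0$. The hypotheses \eqref{E:Lbound}--\eqref{small error} then reduce to the single smallness requirement $\|u_0\|_{\dot H^1_x} \leq \varepsilon \leq \varepsilon_0(0,T)$. Setting $\eta(T) := \varepsilon_0(0, T)$ and $\varepsilon := \|u_0\|_{\dot H^1_x}$, the proposition produces a solution $u : [-T,T]\times\R^3 \to \C$ to \eqref{eq:cq3} with $u(0)=u_0$, along with the \emph{linear} bound
$$
\|\nabla u\|_{L^\infty_t L^2_x \cap L^{10/3}_{t,x}([-T,T]\times\R^3)} \leq C(T)\,\|u_0\|_{\dot H^1_x}
$$
from \eqref{E:S1close}, together with the \emph{absolute} bound $\|u\|_{\dot S^1([-T,T])} \leq C(T)$ from \eqref{E:uS1}.

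To upgrade the linear dependence from these two admissible Strichartz pairs to the full $\dot S^1$ norm, I would perform one additional Strichartz pass. Interpolating the preceding linear bound between the admissible pairs $(\infty,2)$ and $(10/3,10/3)$ yields, in particular,
$$
\|\nabla u\|_{L^{10}_t L^{30/13}_x([-T,T]\times\R^3)} \leq C(T)\,\|u_0\|_{\dot H^1_x}.
$$
Applying Strichartz to the Duhamel representation of $u$ then gives
$$
\|u\|_{\dot S^1([-T, T])} \lesssim_T \|u_0\|_{\dot H^1_x} + \|\nabla N(u)\|_{\dot N^0([-T, T])},
$$
and each monomial in $N(u)$ of degree $k \in \{2,3,4,5\}$ is estimated via the product rule, Sobolev embedding, and H\"older in time: one extracts a single factor $\|\nabla u\|_{L^{10}_t L^{30/13}_x}$ and absorbs all remaining $u$-factors by combining Sobolev with the absolute $\dot S^1$ bound (together with powers of $T$ for the subcritical pieces). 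The resulting estimate $\|\nabla N(u)\|_{\dot N^0} \leq C(T)\,\|u_0\|_{\dot H^1_x}$ closes the argument.

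The main technical point is the bookkeeping in this final nonlinear estimate: one must arrange for exactly one copy of $\nabla u$ to live in a Strichartz norm already known to be linear in $\|u_0\|_{\dot H^1_x}$. For the energy-critical piece $|u|^4 u$ this is the standard pairing $\|u\|_{L^{10}_{t,x}}^4 \|\nabla u\|_{L^{10}_t L^{30/13}_x}$, where $\|u\|_{L^{10}_{t,x}}$ is bounded by Sobolev and the absolute $\dot S^1$ bound; for the subcritical monomials, H\"older in time supplies additional powers of $T$ that reduce matters to the same setup.
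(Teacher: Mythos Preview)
Your approach is correct and takes a slightly different route than the paper. The paper applies Proposition~\ref{P:Stab} with $\tilde u = e^{it\Delta}u_0$, so that $L\lesssim\|u_0\|_{\dot H^1_x}$ by Strichartz; it must then estimate the error $e=-2\gamma\Re\tilde u - N(\tilde u)$, obtaining $\varepsilon\lesssim\sum_{k=1}^5 T^{(5-k)/4}\|u_0\|_{\dot H^1_x}^k$, and finishes by invoking the proposition somewhat tersely. Your choice $\tilde u\equiv 0$ is cleaner in that there is no error term to estimate: the smallness hypothesis collapses to $\|u_0\|_{\dot H^1_x}\leq\varepsilon_0(0,T)$. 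Both routes first extract the linear-in-$\|u_0\|_{\dot H^1_x}$ bound only in the $L^\infty_t L^2_x\cap L^{10/3}_{t,x}$ norms from \eqref{E:S1close}, and both ultimately rely on one more Strichartz pass (the same one that proves \eqref{E:uS1} inside Proposition~\ref{P:Stab}) to upgrade to the full $\dot S^1$; you simply make this pass explicit, which is a virtue.

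One small omission: in your final Duhamel estimate you wrote
\[
\|u\|_{\dot S^1}\lesssim_T \|u_0\|_{\dot H^1_x}+\|\nabla N(u)\|_{\dot N^0},
\]
but the equation \eqref{eq:cq3} reads $(i\partial_t+\Delta)u = 2\gamma\Re u + N(u)$, so the linear term $2\gamma\Re u$ must appear on the right-hand side as well. This is harmless: place it in $L^1_tL^2_x$ to get $T\|\nabla u\|_{L^\infty_tL^2_x}\lesssim_T\|u_0\|_{\dot H^1_x}$ from the bound you already have.
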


\begin{proof}
We apply Proposition~\ref{P:Stab} with $\tilde u = e^{it\Delta} u_0$.  By the Strichartz inequality,
$$
\|\tilde u\|_{L_t^\infty \dot H^1_x([-T, T]\times\R^3)} + \|\nabla \tilde u\|_{L_t^{10}L_x^{\frac{30}{13}}([-T, T]\times\R^3)} \lesssim \|u_0\|_{\dot H^1_x},
$$
while a little computation yields 
\begin{align*}
\Bigl\|\int_0^t e^{i(t-s)\Delta} e(s)\, ds \Bigr\|_{\dot S^1([-T, T])} \lesssim \sum_{k=1}^5 T^{\frac{5-k}4} \|u_0\|_{\dot H^1_x}^k.
\end{align*}
Proposition~\ref{P:Stab} now gives the claim, provided $\eta(T)$ is taken sufficiently small.
\end{proof}

						\section{Well-posedness in the weak topology}\label{section:weak}
						
In this section we prove the following well-posedness result. As described in the introduction, this theorem will play a key role in the proof of Theorem~\ref{thm:wave ops1} in Section~\ref{section:fs1}. 

\begin{theorem}[Weak topology well-posedness]\label{T:weak}
Let $\gamma\in (0,1)$ and let $\{u_n(0)\}_{n\geq 1}$ be a bounded sequence in $\mathcal E$.  Assume that $u_n(0)\rightharpoonup u_0$ weakly in $\dot H_x^1(\R^3)$.  If $\gamma\in (0, \tfrac23)$ we assume additionally that 
$$\|\nabla \Re  u_n(0)\|_{L_x^2}\leq \delta_\gamma
\quad\text{and}\quad
E(u_n(0))\leq \tfrac14 \delta_\gamma,$$
where $\delta_\gamma$ is as in Theorem~\ref{thm:gwp}.  Then there exists a unique solution $u\in C(\R;\E)$ to \eqref{eq:cq3} with $u(0)=u_0,$ and for all $t\in \R$ we have
\begin{align}\label{212}
u_n(t)\rightharpoonup u(t) \qtq{weakly in} \dot H^1_x(\R^3),
\end{align}
where $u_n\in C(\R;\E)$ denotes the solution to \eqref{eq:cq3} with initial data $u_n(0)$, whose existence is guaranteed by Theorem~\ref{thm:gwp}.
\end{theorem}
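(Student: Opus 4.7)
The plan is to establish \eqref{212} through a concentration-compactness argument using the linear and nonlinear profile decompositions announced in the introduction (Proposition~\ref{P:LPD}, Proposition~\ref{P:embedding}, Lemma~\ref{L:approx}, Lemma~\ref{L:STB}).  I would first check that $u_0\in\E$ and that all the hypotheses of Theorem~\ref{thm:gwp} are inherited under the weak limit, so that the solution $u\in C(\R;\E)$ is well defined.  Weak convergence in $\dot H^1_x$ combined with Rellich compactness upgrades $q(u_n(0))\rightharpoonup q(u_0)$ in $L^2_{x,\text{loc}}$, and the uniform bound on $\|q(u_n(0))\|_{L^2_x}$ gives $q(u_0)\in L^2_x$ with $\|u_0\|_\E\leq \liminf_n \|u_n(0)\|_\E$; for $\gamma\in(0,\tfrac23)$ the two smallness conditions likewise pass to the limit by weak lower semicontinuity.

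The heart of the argument is to pass to a subsequence and decompose
$$u_n(0) = u_0 + \sum_{j=1}^J \phi_n^j + r_n^J,$$
where the $\phi_n^j$ are bubbles attached to asymptotically orthogonal parameters $(\lambda_n^j, x_n^j, t_n^j)$ (with $\lambda_n^j$ either identically one, tending to zero, or tending to infinity) and the remainder obeys $\|V^{-1}e^{-itH}V r_n^J\|_{L^{10}_{t,x}(\R\times\R^3)}\to 0$ as $J, n\to\infty$.  To each $\phi_n^j$ I would associate a nonlinear profile $v_n^j$ solving \eqref{eq:cq3}: for unit-scale profiles $v_n^j$ comes directly from Theorem~\ref{thm:gwp}; for $\lambda_n^j\to\infty$ the nonlinearity is negligible on bounded time intervals and $v_n^j$ is well approximated by its linear evolution; for $\lambda_n^j\to 0$ I invoke the embedding of Proposition~\ref{P:embedding}; and for $|t_n^j|\to\infty$ I use the scattering theory for the energy-critical NLS \eqref{E:gopher}.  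Setting
$$\tilde u_n^J(t):= u(t) + \sum_{j=1}^J v_n^j(t) + V^{-1}e^{-itH}V r_n^J,$$
the decoupling Lemma~\ref{L:approx} together with the \emph{a priori} bounds of Lemma~\ref{L:STB} shows that $\tilde u_n^J$ is an approximate solution in the sense of Proposition~\ref{P:Stab}, yielding $\|u_n-\tilde u_n^J\|_{\dot S^1([-T,T])}\to 0$ for each fixed $T>0$.

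To conclude \eqref{212}, test against $\varphi\in C^\infty_c(\R^3)$ at a fixed $t\in\R$:
$$\langle \varphi, u_n(t)\rangle = \langle \varphi, u(t)\rangle + \sum_{j=1}^J\langle\varphi, v_n^j(t)\rangle + \langle\varphi, V^{-1}e^{-itH}V r_n^J\rangle + o(1).$$
The last term is controlled by the Strichartz estimates of Lemma~\ref{L:VStrichartz} and becomes arbitrarily small by taking $J$ large.  Each bubble contribution vanishes as $n\to\infty$: shrinking bubbles ($\lambda_n^j\to 0$) pair to zero against the smooth compactly supported $\varphi$ via $\dot H^1_x\times\dot H^{-1}_x$ duality; spreading bubbles ($\lambda_n^j\to\infty$) dilate out of $\varphi$'s detectable scale; spatially translated bubbles ($|x_n^j|\to\infty$) are carried off the support of $\varphi$; and time-translated bubbles ($|t_n^j|\to\infty$) disperse, again through the Strichartz/dispersive estimates.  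The density of $C^\infty_c$ in $\dot H^{-1}_x$ then gives \eqref{212}.

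I expect the main obstacle to be the construction and control of the nonlinear profiles with $\lambda_n^j\to 0$.  Such profiles are merely $\dot H^1_x$ (not in $\E$), and their natural time scale shrinks with $\lambda_n^j$, so that controlling them on any fixed interval $[-T,T]$ is essentially equivalent to controlling a global solution of the energy-critical equation \eqref{E:gopher}.  This is precisely the role played by Proposition~\ref{P:embedding}, and its existence is what necessitates the elaborate decoupling and stability machinery (Lemma~\ref{L:approx}, Lemma~\ref{L:STB}, Proposition~\ref{P:Stab}) that this section is devoted to developing.
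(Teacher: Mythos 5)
Your overall architecture is the same as the paper's: decompose $u_n(0)-u_0$ by the linear profile decomposition, build nonlinear profiles (Theorem~\ref{thm:gwp} for unit-scale bubbles, Proposition~\ref{P:embedding} for concentrating ones), assemble an approximate solution, invoke Lemma~\ref{L:STB}, Lemma~\ref{L:approx} and Proposition~\ref{P:Stab}, and finish by showing each bubble and the remainder vanish weakly. However, there is a genuine gap in your treatment of the profiles you list beyond the two scenarios actually appearing in Proposition~\ref{P:LPD}, namely those with $\lambda_n^j\to\infty$ and those at unit scale with $|t_n^j|\to\infty$. You propose to handle the latter ``using the scattering theory for the energy-critical NLS \eqref{E:gopher}''; but at unit scale the relevant equation is \eqref{eq:cq3} itself, not \eqref{E:gopher}, and no scattering theory for \eqref{eq:cq3} is available at this point --- constructing solutions with prescribed asymptotics is the goal of the paper, the quadratic terms rule out a naive small-data scattering argument, and the correct asymptotics are modified at low frequencies. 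As stated, that step fails. The paper's device is different and is precisely what the two-scenario structure of Proposition~\ref{P:LPD} encodes: since \eqref{212} is proved for each fixed $t$, one may work on a compact window $[-T,T]$, and by Lemma~\ref{L:redundant profiles} every bubble with $|t_n^j|+\lambda_n^j\to\infty$ has asymptotically vanishing $L^{10}_{t,x}([-T,T])$ linear evolution and is simply absorbed into the error $w_n^J$; only unit-scale, untranslated-in-time profiles and concentrating profiles survive, and the latter are exactly those covered by Proposition~\ref{P:embedding}. Relatedly, your assertion that the remainder is small in $L^{10}_{t,x}(\R\times\R^3)$ globally in time is stronger than what transplanting the Schr\"odinger profile decomposition to $e^{-itH}$ yields; the compact-interval statement \eqref{120} is what is proved and is all that is needed.

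A secondary caveat: the weak vanishing at fixed time of the \emph{nonlinear} concentrating profiles is not a pure scaling/duality computation, because \eqref{eq:cq3} is not scale invariant, so one cannot rescale the nonlinear solution itself. The paper deduces it (see \eqref{nonlin profile to zero}) from the structure provided by Proposition~\ref{P:embedding}: the nonlinear profiles are asymptotically equal in $L^\infty_t\dot H^1_x$, up to a phase, to rescaled solutions of \eqref{EC}, for which scale invariance and uniqueness do give weak convergence to zero; your argument should be routed through that approximation. With these repairs --- discarding the spreading and time-escaping bubbles into the error rather than treating them as nonlinear profiles, and using Proposition~\ref{P:embedding} for the weak vanishing of concentrating profiles --- your proof coincides with the paper's.
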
											

We begin with the following lemma, which guarantees that the limit $u_0$ belongs to the energy space and obeys the necessary smallness conditions when $\gamma\in(0,\tfrac23)$, so that the existence and uniqueness of the solution $u\in C(\R;\E)$ follow from Theorem~\ref{thm:gwp}.

\begin{lemma}\label{L:bounds survive}
Fix $\gamma\in (0,1)$ and suppose $\{u_n\}_{n\geq 1}$ is a bounded sequence in $\E$ that satisfies $u_n(x-x_n) \rightharpoonup u_0(x)$ weakly in $\dot H_x^1(\R^3)$ for some sequence $\{x_n\}_{n\geq 1}\subseteq\R^3$.  Then $u_0\in \mathcal E$. Moreover, if $\gamma\geq \frac23$, then
\begin{equation}\label{E:Eweaklydown}
E(u_0)\leq \liminf_{n\to\infty} E(u_n).
\end{equation}
If  $\gamma\in(0,\frac23)$ and $\|\nabla\Re u_n\|_{L_x^2}^2\leq\delta_\gamma$, then $\|\nabla\Re u_0\|_{L_x^2}^2\leq\delta_\gamma$ and \eqref{E:Eweaklydown} holds. Here $\delta_\gamma$ is as in Theorem~\ref{thm:gwp}.
\end{lemma}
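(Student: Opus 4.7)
After translating each $u_n$ (energy, $\dot H^1$ and $\mathcal{E}$-norm are all translation-invariant) I may and will assume $x_n\equiv 0$, so $u_n \rightharpoonup u_0$ weakly in $\dot H^1_x$.

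\emph{Step 1 (membership in $\mathcal{E}$).} Since $\{u_n\}$ is bounded in $\dot H^1_x$, Rellich--Kondrashov together with a diagonal argument produces a subsequence (not relabeled) with $u_n\to u_0$ strongly in $L^p_{\mathrm{loc}}(\R^3)$ for every $2\le p<6$ and pointwise almost everywhere. Consequently
$$
q(u_n)=2\Re u_n+|u_n|^2 \longrightarrow 2\Re u_0+|u_0|^2=q(u_0) \qquad \text{a.e.}
$$
Using $\sup_n\|q(u_n)\|_{L^2_x}<\infty$, Fatou's lemma gives $q(u_0)\in L^2_x$, while weak lower-semicontinuity of the $\dot H^1_x$-norm gives $u_0\in \dot H^1_x$. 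Hence $u_0\in\mathcal{E}$.

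\emph{Step 2 (smallness transfer).} When $\gamma\in(0,\tfrac23)$ the hypothesis $u_n\rightharpoonup u_0$ in $\dot H^1_x$ descends to $\nabla\Re u_n\rightharpoonup \nabla\Re u_0$ weakly in $L^2_x$, so
$\|\nabla\Re u_0\|_{L^2_x}^2\le \liminf_n\|\nabla\Re u_n\|_{L^2_x}^2\le \delta_\gamma$.

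\emph{Step 3 (lower-semicontinuity of the energy).} I rewrite
$$
E(u)=\tfrac12\|\nabla u\|_{L^2_x}^2+\tfrac16\int_{\R^3} q(u)^2\bigl(q(u)+\tfrac{3\gamma}{2}\bigr)\,dx.
$$
For $\gamma\ge\tfrac23$ the relation $q(u)\ge -1$ forces $q(u)+\tfrac{3\gamma}{2}\ge 0$, so the integrand is pointwise nonnegative. Fatou's lemma applied to the (a.e.\ convergent) nonnegative integrand, combined with weak lower-semicontinuity of $\|\nabla u\|_{L^2_x}^2$, yields $E(u_0)\le \liminf_n E(u_n)$. This handles \eqref{E:Eweaklydown} when $\gamma\ge \tfrac23$.

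\emph{Step 4 (the low-$\gamma$ regime; the main obstacle).} When $\gamma<\tfrac23$ the cubic integrand above is sign-indefinite, so Fatou's lemma cannot be applied directly. I handle this by a Brezis--Lieb-type decoupling. Set $w_n:=u_n-u_0$; then $w_n\rightharpoonup 0$ in $\dot H^1_x$ and, from the standard polarization identity,
$$
\|\nabla\Re w_n\|_{L^2_x}^2=\|\nabla\Re u_n\|_{L^2_x}^2-\|\nabla\Re u_0\|_{L^2_x}^2+o(1)\le \delta_\gamma+o(1),
$$
so the smallness hypothesis is inherited by $w_n$ for large $n$. Since $q(u_n)=q(u_0)+q(w_n)+2\Re(\bar u_0 w_n)$ with the last piece vanishing locally in $L^2_x$ (via $u_0\in L^6_x$ and strong $L^p_{\mathrm{loc}}$ convergence of $w_n$ to $0$), expanding the squares and cubes in the three-term sum and using $u_0 \in L^6_x$ together with the uniform $\mathcal E$-bound on $w_n$ allows one to pass the cross-terms to zero after a tail truncation that exploits the decay of $u_0$. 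This yields the asymptotic decoupling
$$
E(u_n)=E(u_0)+E(w_n)+o(1).
$$
Now Lemma~\ref{lemma:coercive2}(i) applied to $w_n$ gives $E(w_n)\ge c\|w_n\|_{\mathcal{E}}^2\ge 0$, and hence $\liminf_n E(u_n)\ge E(u_0)$.

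The genuinely delicate step is the asymptotic decoupling in Step 4: proving that the cross-terms produced by $q(u_n)^2$ and especially by $q(u_n)^3$ are $o(1)$ requires one to combine a.e.\ convergence of $u_n$, Rellich compactness on bounded sets, and the $L^6_x$ decay of the fixed function $u_0$ to beat down contributions from regions where $w_n$ concentrates or escapes to spatial infinity. The cases $\gamma\ge\tfrac23$ collapse to a one-line Fatou argument, whereas the low-$\gamma$ case is where essentially all the work lies.
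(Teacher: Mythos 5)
Your Steps 1--3 are correct and essentially coincide with the paper's own argument: Rellich--Kondrashov plus a.e.\ convergence and Fatou for $q(u_0)\in L^2_x$, weak lower semicontinuity for the smallness transfer, and pointwise nonnegativity of the density $\tfrac\gamma4 q^2+\tfrac16 q^3=\tfrac16 q^2(q+\tfrac{3\gamma}{2})$ when $\gamma\ge\tfrac23$. The genuine gap is Step 4. The decoupling $E(u_n)=E(u_0)+E(w_n)+o(1)$, and the ``uniform $\E$-bound on $w_n$'' that it leans on, do not follow from the hypotheses and are in general false. The reason is the cross term in $q(u_n)=q(u_0)+q(w_n)+2\Re(\bar u_0 w_n)$: membership in $\E$ gives no $L^2_x$ or $L^3_x$ control of the imaginary parts (only $L^6_x$ via $\dot H^1_x$), and $u_0$ has no quantitative decay, so the proposed tail truncation has nothing to pair the tail of $u_0$ against; the product $\Im u_0\,\Im w_n$ need not even be bounded in $L^2_x$. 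Concretely, take $h(x)=\eps\langle x\rangle^{-3/5}$, $g_n(x)=\eps\lambda_n^{-1/2}\psi(x/\lambda_n)$ with $\psi\in C^\infty_c$, $\lambda_n\to\infty$, and set $u_0=-\tfrac12h^2+ih$, $u_n=-\tfrac12(h+g_n)^2+i(h+g_n)$. Then $q(u_n)=\tfrac14(h+g_n)^4$ is bounded in $L^2_x$, so $u_n$ is bounded in $\E$ (and satisfies the smallness hypotheses if $\eps$ is small), and $u_n\rightharpoonup u_0$ in $\dot H^1_x$; yet $q(w_n)=-2hg_n+O((hg_n+g_n^2)^2)$ has $\|q(w_n)\|_{L^2_x}\sim\lambda_n^{2/5}\to\infty$, so $w_n$ is not bounded in $\E$ and $E(w_n)\to\infty$ while $E(u_n)=O(1)$: the claimed identity fails, and so does any inequality of the form $E(u_n)\ge E(u_0)+E(w_n)+o(1)$. (A lesser issue: even where applicable, Lemma~\ref{lemma:coercive2}(i) requires $\|\nabla\Re w_n\|_{L^2_x}^2\le\delta_\gamma$, whereas your polarization argument only gives $\delta_\gamma+o(1)$.)

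The paper avoids all of this by never leaving the local setting. The same Rellich/weak lower-semicontinuity argument you use in Step 1 gives, for every ball $B$,
\begin{equation*}
\int_B \tfrac12|\nabla u_0|^2+\tfrac\gamma4 q(u_0)^2+\tfrac16 q(u_0)^3\,dx\ \le\ \liminf_{n\to\infty}\int_B \tfrac12|\nabla u_n|^2+\tfrac\gamma4 q(u_n)^2+\tfrac16 q(u_n)^3\,dx,
\end{equation*}
the positive sextic term being what makes the density weakly lower semicontinuous. For $\gamma\ge\tfrac23$ the right-hand integrand is nonnegative, so the right-hand side is at most $\liminf E(u_n)$; for $\gamma\in(0,\tfrac23)$ one instead invokes the exterior coercivity \eqref{EoutsideB} of Lemma~\ref{lemma:coercive2}(ii), which under the hypothesis $\|\nabla\Re u_n\|_{L^2_x}^2\le\delta_\gamma$ gives $\int_{B^c}\tfrac12|\nabla u_n|^2+\tfrac\gamma4 q(u_n)^2+\tfrac16 q(u_n)^3\,dx\ge0$ and hence again $\int_B(\cdots)\,dx\le E(u_n)$. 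Letting $B\nearrow\R^3$ (dominated convergence, using $u_0\in\E$) yields \eqref{E:Eweaklydown}. You should replace Step 4 by this argument: the exterior coercivity estimate is exactly the substitute for the global decoupling, and it uses only the hypotheses you actually have.
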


\begin{proof}
Without loss of generality, we may assume that $x_n\equiv 0$.

To prove that $u_0\in \mathcal E$, it suffices to show that $q(u_0)\in L^2_x$.  As $u_n \rightharpoonup u_0$ weakly in $\dot H_x^1(\R^3)$, invoking Rellich--Kondrashov and passing to a subsequence, we deduce that $u_n\to u_0$ in $L_x^p(K)$ for any $2\leq p<6$ and any compact set $K\subset \R^3$.  Therefore, for any ball $B\subset \R^3$,
\begin{align*}
\int_B |q(u_0(x))|^2\, dx = \lim_{n\to \infty} \int_B |q(u_n(x))|^2\, dx \leq \liminf_{n\to \infty} \int_{\R^3} |q(u_n(x))|^2\, dx <\infty.
\end{align*}
As the bound does not depend on $B$, this proves $q(u_0)\in L^2_x$.

Proceeding similarly and using (weak) lower semicontinuity of the $\dot H^1_x$- and $L^6_x$-norms, we obtain
$$
\int_B \tfrac12|\nabla u_0|^2+\tfrac\gamma4q(u_0)^2+\tfrac16q(u_0)^3\,dx \leq \liminf_{n\to \infty} \int_B \tfrac12|\nabla u_n|^2+\tfrac\gamma4q(u_n)^2+\tfrac16q(u_n)^3\,dx
$$
for any ball $B$.  It is crucial here that the sextic term in the energy appears with a positive coefficient.

When $\gamma\in[\frac23, 1)$, the energy density is positive and so the right-hand side above is majorized by $\liminf E(u_n)$.  When $\gamma\in(0, \frac23)$, we use instead \eqref{EoutsideB} to reach the same conclusion.  As $u_0\in \E$, the dominated convergence theorem yields \eqref{E:Eweaklydown}.
\end{proof}

We next prove a linear profile decomposition adapted to \eqref{eq:lin} for $\dot{H}_x^1$-bounded sequences. Beginning with the profile decomposition for the linear Schr\"odinger equation, we group the profiles according to the behavior of their associated parameters. We also show that the error term vanishes in the limit under propagation by $V^{-1}e^{-itH}V$ (in addition to propagation by $e^{it\Delta}$). 
								
\begin{proposition}[Linear profile decomposition]\label{P:LPD}
Suppose $\{f_n\}_{n\geq 1}$ is a bounded sequence in $\dot H^1_x(\R^3)$ and let $T>0$.  Passing to a subsequence, there exists $J^*\in \{0, 1, 2, \ldots\}\cup \{\infty\}$ and for each finite $1\leq j\leq J^*$ there exist a non-zero profile $\phi^j\in \dot H^1_x(\R^3)$, scales $\{\lambda_n^j\}_{n\geq 1}\subset (0,\infty)$, and positions $\{(t_n^j, x_n^j)\}_{n\geq 1}\subset \R\times\R^3$ conforming to one of the following two scenarios:
\begin{CI}
\item $\lambda_n^j\equiv 1$ and $t_n^j\equiv 0$, 
\item $\lambda_n^j\to 0$ as $n\to \infty$ and either $t_n^j\equiv 0$ or $t_n^j(\lambda_n^j)^{-2} \to \pm \infty$ as $n\to \infty$,  
\end{CI}
so that for any finite $0\leq J\leq J^*$ we have the decomposition
\begin{align}
\nonumber
f_n(x)=\sum_{j=1}^J  e^{-it_n^j\Delta}\bigl[(\lambda_n^j)^{-\frac12}\phi^j\bigl(\tfrac{x-x_n^j}{\lambda_n^j}\bigr)\bigr] +w_n^J(x)
\end{align}
satisfying the following properties:
\begin{gather}
(\lambda_n^j)^{\frac12}\bigl( e^{it_n^j\Delta}f_n\bigr)(\lambda_n^j x +x_n^j) \rightharpoonup \phi^j \quad\text{weakly in } \dot H^1_x, \label{119}\\
\lim_{J\to J^*}\!\limsup_{n\to \infty} \Bigl[\bigl\|V^{-1}e^{-itH}Vw_n^J\bigr\|_{L_{t,x}^{10}([-T,T]\times\R^3)}\!\!+\! \|e^{it\Delta} w_n^J\|_{L_{t,x}^{10}([-T,T]\times\R^3)}\Bigr]=0,\label{120}\\
\sup_J \limsup_{n\to \infty} \Bigl[ \| f_n\|_{\dot H^1_x}^2 - \sum_{j=1}^J \|\phi^j\|_{\dot H^1_x}^2 -\|w_n^J\|_{\dot H^1_x}^2\Bigr] \!=0,\label{121}\\
(\lambda_n^j)^{\frac12}\bigl( e^{it_n^j\Delta}w_n^J\bigr)(\lambda_n^j x +x_n^j) \rightharpoonup 0 \quad\text{weakly in } \dot H^1_x \qtq{for all} 1\leq j\leq J,\label{122}\\
\lim_{n\to \infty} \tfrac{\lambda_n^j}{\lambda_n^l}+\tfrac{\lambda_n^l}{\lambda_n^j} + \tfrac{|x_n^j-x_n^l|^2}{\lambda_n^j\lambda_n^l} + \tfrac{|t_n^j-t_n^l|}{\lambda_n^j\lambda_n^l}=\infty \qtq{for all} j\neq l.\label{123}
\end{gather}
\end{proposition}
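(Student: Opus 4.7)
\smallskip
\noindent\textbf{Plan.} The approach is to bootstrap from the classical Bahouri--G\'erard linear profile decomposition for the Schr\"odinger group $e^{it\Delta}$ in $\dot H^1_x(\R^3)$, normalize the parameters, absorb degenerate profiles back into the remainder, and then transfer the Strichartz vanishing \eqref{120} from $e^{it\Delta}$ to $V^{-1}e^{-itH}V$ on the bounded time window $[-T,T]$ via a high/low-frequency split using \eqref{HvsDelta}.

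\smallskip
First I apply the classical decomposition to $\{f_n\}$, obtaining profiles $\phi^j\in \dot H^1_x$ with parameters $(\lambda_n^j, x_n^j, t_n^j)$ and remainders $w_n^J$ satisfying the $\dot H^1_x$-decoupling \eqref{121}, the parameter orthogonality \eqref{123}, the weak convergences \eqref{119} and \eqref{122}, and the vanishing $\|e^{it\Delta}w_n^J\|_{L^{10}_{t,x}(\R\times\R^3)}\to 0$ in the double limit.  Passing to further subsequences I assume $\lambda_n^j$ converges in $[0,\infty]$ and $t_n^j/(\lambda_n^j)^2$ in $\R\cup\{\pm\infty\}$.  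Using the scaling, space-, and time-translation symmetries of $e^{it\Delta}$, each profile is brought into scenario (i), scenario (ii), or one of the degenerate forms $\lambda_n^j\equiv 1$ with $|t_n^j|\to \infty$ or $\lambda_n^j\to \infty$.  I absorb the degenerate profiles into the remainder: their contribution to $\|\cdot\|_{L^{10}_{t,x}([-T,T])}$ under $e^{it\Delta}$ vanishes by scale-invariance (the effective time interval $[-T/(\lambda_n^j)^2,T/(\lambda_n^j)^2]$ shrinks to zero when $\lambda_n^j\to \infty$) and by dispersion (when $|t_n^j|\to \infty$), and the analogous vanishing under $V^{-1}e^{-itH}V$ follows from the high/low analysis described next.

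\smallskip
The main new task is the $V^{-1}e^{-itH}V$ part of \eqref{120}.  Decompose $w_n^J = \PHi w_n^J + \PLo w_n^J$ with a large dyadic threshold $N$.  On $\PHi$, the identity \eqref{HvsDelta} and the fact that $U\to 1$ as $|\xi|\to \infty$ show that $V^{-1}e^{-itH}V - e^{-it\gamma}e^{it\Delta}$ has symbol $O(T\langle\xi\rangle^{-2})$ uniformly for $|t|\leq T$, so
$$
\|\PHi V^{-1}e^{-itH}V w_n^J\|_{L^{10}_{t,x}([-T,T])} \lesssim \|\PHi e^{it\Delta} w_n^J\|_{L^{10}_{t,x}([-T,T])} + O_T\bigl(N^{-2}\|w_n^J\|_{\dot H^1_x}\bigr),
$$
the first term vanishing from the Schr\"odinger Strichartz bound and the absorption, the second negligible once $N$ is large.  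On $\PLo$, Bernstein together with the soft bound \eqref{E:2:30} controls $\|\PLo V^{-1}e^{-itH}V w_n^J\|_{L^{10}_{t,x}([-T,T])}$ by a $T$- and $N$-dependent multiple of $\|w_n^J\|_{\dot H^1_x}$; combining this with the vanishing of additional Strichartz norms of $e^{it\Delta}w_n^J$ (obtained by interpolating the $L^{10}_{t,x}$-smallness with $\dot H^1_x$-boundedness) yields vanishing of the low-frequency piece.  Sending $n\to \infty$, then $J\to J^*$, and finally $N\to \infty$ yields \eqref{120}.

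\smallskip
The principal obstacle is the low-frequency analysis: at low frequency $V^{-1}e^{-itH}V$ exhibits wave-like (rather than Schr\"odinger-like) dispersion, so Schr\"odinger-based Strichartz vanishing does not transfer automatically.  The resolution rests on the fact that on a bounded time window the low-frequency Strichartz norm is controlled crudely via \eqref{E:2:30} and Bernstein, together with the additional decay of the remainder in auxiliary Strichartz norms obtained from interpolation.
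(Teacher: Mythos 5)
Your overall strategy is the same as the paper's: start from the Keraani/Bahouri--G\'erard decomposition for $e^{it\Delta}$ in $\dot H^1_x$, normalize the parameters, absorb the degenerate bubbles (those with $\lambda_n^j\to\infty$ or with times escaping to infinity) into the remainder using the compactness of the window $[-T,T]$, and then transfer the $L^{10}_{t,x}$-vanishing to $V^{-1}e^{-itH}V$. The normalization/absorption step and the high-frequency comparison via \eqref{HvsDelta} are fine. The gap is in the low-frequency half of the transfer, i.e.\ precisely the step you flag as the principal obstacle: you bound $\|\PLo V^{-1}e^{-itH}V w_n^J\|_{L^{10}_{t,x}([-T,T]\times\R^3)}$ by a constant $C(T,N)$ times $\|w_n^J\|_{\dot H^1_x}$ via \eqref{E:2:30} and Bernstein, and then assert that combining this with the vanishing of auxiliary Strichartz norms of $e^{it\Delta}w_n^J$ ``yields vanishing of the low-frequency piece.'' It does not: $\|w_n^J\|_{\dot H^1_x}$ is merely bounded (it does not tend to zero, by \eqref{121}), so the crude bound gives no smallness; and the smallness you do have concerns the free Schr\"odinger evolution, a different propagator, with no mechanism supplied to pass it to $V^{-1}e^{-itH}V$ at low frequencies. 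Interpolation cannot rescue this as stated, since every norm of $\PLo V^{-1}e^{-itH}Vw_n^J$ you can reach this way is again controlled only by quantities that are bounded rather than vanishing (note in particular that the low-frequency $L^2_x$ mass of $w_n^J$ is not controlled by $\dot H^1_x$ at all). Also note that your order of limits ($n\to\infty$, $J\to J^*$, then $N\to\infty$) requires the low-frequency piece to vanish for each \emph{fixed} $N$, so the missing smallness cannot be recovered from the $N$-limit either.

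The missing ingredient is a composition argument, which is how the paper closes this step: writing the propagator in the form \eqref{E:matrix prop} and using that $U$ and $\PHi U^{-1}$ are bounded, it suffices to observe that $e^{\mp itH}e^{\mp it\Delta}$ (whose symbol is $e^{\mp it(\gamma+m(\xi))}$ by \eqref{HvsDelta}) and $\PLo U^{-1}\sin(tH)\,e^{-it\Delta}$ (whose symbol is bounded, together with its derivatives, since $|U^{-1}(\xi)\sin(tH(\xi))|\lesssim |t|\langle\xi\rangle^2\lesssim T$ on low frequencies) are Mikhlin multipliers uniformly for $t\in[-T,T]$. Hence, pointwise in time,
$$
\|V^{-1}e^{-itH}Vw_n^J\|_{L^{10}_x}\lesssim_T \|e^{it\Delta}w_n^J\|_{L^{10}_x}+\|e^{-it\Delta}w_n^J\|_{L^{10}_x},
$$
(the conjugate terms arising from $\Re$ and $\Im$ have the same norms), and both right-hand terms vanish in $L^{10}_{t,x}([-T,T]\times\R^3)$ by the Schr\"odinger decomposition. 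This transfers the smallness directly at \emph{all} frequencies and renders your large-threshold high/low split unnecessary; the only frequency split needed is the fixed one at scale $1$ for the entry $U^{-1}\sin(tH)$. With this substitution your argument matches the paper's proof.
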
	

\begin{proof}
Using the linear profile decomposition for the Schr\"odinger propagator for bounded sequences in $\dot H^1_x$ (see, for example, \cite{Keraani-H1} or \cite[Theorem~4.1]{Oberwolfach}), we obtain a decomposition
\begin{align}\label{118'}
f_n(x)=\sum_{j=1}^J  e^{-it_n^j\Delta}\bigl[(\lambda_n^j)^{-\frac12}\phi^j\bigl(\tfrac{x-x_n^j}{\lambda_n^j}\bigr)\bigr] +r_n^J(x)
\end{align}
 satisfying \eqref{119}, \eqref{121}, \eqref{122}, and \eqref{123} (with $w_n^J$ replaced by $r_n^J$), as well as
\begin{align}\label{120'}
\lim_{J\to J^*}\limsup_{n\to \infty} \bigl\|e^{it\Delta}r_n^J\bigr\|_{L_{t,x}^{10}([-T,T]\times\R^3)}=0.
\end{align}

We will first show that we may assume the parameters conform to the two scenarios described above; in particular, we will show that we may absorb any other bubbles of concentration into the error $r_n^J$, while maintaining condition \eqref{120'}.  To complete the proof of the proposition, we will show that condition \eqref{120'} (for the new error term) suffices to prove \eqref{120}.  Note that it is essential in what follows that we work on a compact time interval.

In what follows we use the notation
$$
\phi_n^j(x):=e^{-it_n^j\Delta}\bigl[(\lambda_n^j)^{-\frac12}\phi^j\bigl(\tfrac{x-x_n^j}{\lambda_n^j}\bigr)\bigr].
$$

We begin with the following lemma.
\begin{lemma}\label{L:redundant profiles}
If $|t_n^j| + \lambda_n^j\to \infty$ as $n\to \infty$, then
\begin{align*}
\lim_{n\to \infty} \bigl\| e^{it\Delta} \phi_n^j\bigr\|_{L_{t,x}^{10}([-T,T]\times\R^3)}=0.
\end{align*}
\end{lemma}

\begin{proof}
A direct computation gives
\begin{align*}
\bigl\| e^{it\Delta} \phi_n^j\bigr\|_{L_{t,x}^{10}([-T,T]\times\R^3)} = \bigl\| e^{it\Delta} \phi^j\bigr\|_{L_{t,x}^{10}\bigl(\bigl[\frac{-t_n^j-T}{(\lambda_n^j)^2},\frac{-t_n^j+T}{(\lambda_n^j)^2}\bigr]\times\R^3\bigr)}.
\end{align*}

If $\lambda_n^j\to \infty$, then the lengths of the time intervals appearing on the right-hand side of the equality above shrink to zero; consequently, by the dominated convergence theorem combined with the Strichartz inequality, we deduce the claim.

Passing to a subsequence, we may henceforth assume that $\lambda_n^j\to \lambda^j\in [0, \infty)$.  In this case, we have $|t_n^j|\to \infty$, and so the time intervals escape to infinity.  Thus the claim follows once again from the dominated convergence theorem combined with the Strichartz inequality.
\end{proof}

Discarding the bubbles of concentration whose parameters satisfy the hypotheses of Lemma~\ref{L:redundant profiles}, we can now see that we may reduce attention to the two scenarios described in Proposition~\ref{P:LPD}.  Indeed, passing to a subsequence, we may assume that $\lambda_n^j\to \lambda^j\in [0, \infty)$ and $t_n^j\to t^j\in (-\infty, \infty)$.  If $\lambda^j\neq 0$, then we may assume that $\lambda_n^j\equiv 1$ and $t_n^j\equiv 0$ by redefining the corresponding profile to be $(\lambda^j)^{-\frac12} e^{-it^j\Delta}[\phi^j(\frac{\cdot}{\lambda^j})]$. The error incurred by this modification can be absorbed into $r_n^J$; indeed, we have
\begin{align*}
\bigl\|&\phi_n^j - (\lambda^j)^{-\frac12} e^{-it^j\Delta}\bigl[\phi^j\bigl(\tfrac{x-x_n^j}{\lambda^j}\bigr)\bigr]\bigr\|_{\dot H^1_x}\\
&\leq \bigl\|(\lambda^j_n)^{-\frac12} \phi^j\bigl(\tfrac{x}{\lambda^j_n}\bigr)-(\lambda^j)^{-\frac12} \phi^j\bigl(\tfrac{x}{\lambda^j}\bigr)\bigr\|_{\dot H^1_x} + 
\bigl\|(e^{-it_n^j\Delta} - e^{-it^j\Delta})\bigl[(\lambda^j)^{-\frac12} \phi^j\bigl(\tfrac{x}{\lambda^j}\bigr)\bigr]\bigr\|_{\dot H^1_x},
\end{align*}
which tends to zero as $n\to \infty$ by the strong convergence of the linear Schr\"odinger propagator.  If instead $\lambda^j=0$, then passing to a further subsequence we may assume that either $t_n^j\equiv 0$ or $t_n^j(\lambda_n^j)^{-2} \to \pm \infty$ as $n\to \infty$.  Indeed, if there is a subsequence along which
$t_n^j(\lambda_n^j)^{-2} \to \tau\in (-\infty, \infty)$, then we redefine the profile to be $e^{-i\tau\Delta}\phi^j$ and $t_n^j\equiv 0$.  It is easy to see that the resulting error can be absorbed into $r_n^J$.

It  remains to prove that the new error $w_n^J$ (which consists of $r_n^J$ plus the bubbles of concentration whose parameters satisfy the hypotheses of Lemma~\ref{L:redundant profiles}) obeys \eqref{120}.  This is a consequence of the following: if
\begin{align*}
\lim_{J\to J^*}\limsup_{n\to \infty} \bigl\|e^{it\Delta}w_n^J\bigr\|_{L_{t,x}^{10}([-T,T]\times\R^3)} = 0,
\end{align*}
then
\begin{align*}
\lim_{J\to J^*}\limsup_{n\to \infty} \bigl\|V^{-1}e^{-itH}Vw_n^J\bigr\|_{L_{t,x}^{10}([-T,T]\times\R^3)}=0.
\end{align*}
To prove this final implication, we argue as follows: In view of the representation \eqref{E:matrix prop} and the boundedness of $U$ and $\PHi  U^{-1}$, it suffices to verify that $e^{\mp itH}e^{\mp it\Delta}$ and $\PLo U^{-1}\sin(tH) e^{-it\Delta}$ are Mikhlin multipliers with bounds that are uniform for $t\in[-T,T]$.  In the former case, this follows from \eqref{HvsDelta}; with regard to the latter, see \eqref{E:2:30}.

This completes the proof of Proposition~\ref{P:LPD}.
\end{proof}

In the proof of Theorem~\ref{T:weak}, we will construct solutions to \eqref{eq:cq3} associated to each $\phi_n^j$. For profiles conforming to the first scenario in Proposition~\ref{P:LPD}, we can achieve this by an application of Lemma~\ref{L:bounds survive} and Theorem~\ref{thm:gwp}. For profiles conforming to the second scenario, this is a more difficult problem, which we address in the following proposition.

\begin{proposition}[Highly concentrated nonlinear profiles]\label{P:embedding}
Let $\phi\in \dot H_x^1(\R^3)$ and $T>0$. Assume $\{\lambda_n\}_{n\geq 1}\subset (0, \infty)$ and $\{(t_n, x_n)\}_{n\geq 1}\subset \R\times\R^3$ satisfy $\lambda_n\to 0$ and either $t_n\equiv 0$ or $t_n \lambda_n^{-2} \to \pm \infty$.  Then for $n$ sufficiently large, there exists a solution $u_n$ to \eqref{eq:cq3} with initial data
$$
u_n(0,x)= \phi_n(x):=e^{-it_n\Delta}\bigl[\lambda_n^{-\frac12}\phi\bigl(\tfrac{x-x_n}{\lambda_n}\bigr)\bigr]
$$
satisfying 
\begin{align}\label{1012}
\|u_n\|_{\dot S^1([-T,T])}\leq C(\|\phi\|_{\dot H^1_x}).
\end{align}
Moreover, for all $\eps>0$ there exist $\phi_\eps, \psi_\eps\in C^\infty_c([-T,T]\times\R^3)$ such that
\begin{align}
&\limsup_{n\to \infty}\bigl\|u_n(t,x) - e^{-i\gamma t}\lambda_n^{-\frac12} \phi_\eps\bigl(\tfrac{t-t_n}{\lambda_n^2}, \tfrac{x-x_n}{\lambda_n} \bigr)\bigr\|_{L_{t,x}^{10}([-T, T]\times\R^3)}\leq\eps,\label{147}\\
&\limsup_{n\to \infty}\bigl\|\nabla u_n(t,x) - e^{-i\gamma t}\lambda_n^{-\frac32} \psi_\eps\bigl(\tfrac{t-t_n}{\lambda_n^2}, \tfrac{x-x_n}{\lambda_n} \bigr)\bigr\|_{L_{t,x}^{\frac{10}3}([-T, T]\times\R^3)}\leq \eps.\label{148}
\end{align}
\end{proposition}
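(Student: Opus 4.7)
The plan is to treat \eqref{eq:cq3} as a perturbation of the energy-critical NLS in suitably rescaled coordinates, construct an approximate solution out of the corresponding NLS solution, and then invoke Proposition~\ref{P:Stab}. First, I will let $w\in C(\R;\dot H^1_x)$ be the global solution to $(i\partial_\sigma+\Delta)w=|w|^4 w$ prescribed by $w(0)=\phi$ if $t_n\equiv 0$, or by the scattering condition $\|w(\sigma)-e^{i\sigma\Delta}\phi\|_{\dot H^1}\to 0$ as $\sigma\to\pm\infty$ in the case $t_n/\lambda_n^2\to\mp\infty$; existence and the global bound $\|w\|_{\dot S^1(\R)}\le C(\|\phi\|_{\dot H^1})$ come from \cite{CKSTT:gwp}. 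The approximate solution will be
\[
\tilde u_n(t,x):=e^{-i\gamma t}\lambda_n^{-1/2}\, w\Bigl(\tfrac{t-t_n}{\lambda_n^2},\,\tfrac{x-x_n}{\lambda_n}\Bigr),
\]
for which a direct chain-rule computation, together with the identity $\gamma u - 2\gamma u_1 = -\gamma\bar u$, produces
\[
(i\partial_t+\Delta-2\gamma\Re)\tilde u_n = -\gamma\,\overline{\tilde u_n} + N_5(\tilde u_n),
\]
so $\tilde u_n$ satisfies the perturbed equation with residual $e_n := -\gamma\,\overline{\tilde u_n}-N_2(\tilde u_n)-N_3(\tilde u_n)-N_4(\tilde u_n)$. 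Scale invariance of the admissible Strichartz norms immediately yields $\|\tilde u_n\|_{\dot S^1([-T,T])}\le C(\|\phi\|_{\dot H^1})$, and the initial-data mismatch $\|\tilde u_n(0)-\phi_n\|_{\dot H^1_x}$ is either exactly $0$ (when $t_n\equiv 0$) or tends to $0$ from the scattering hypothesis on $w$.

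The main work will be in showing that $\|\int_0^t e^{i(t-s)\Delta}\nabla e_n\,ds\|_{L^\infty_tL^2_x\cap L^{10/3}_{t,x}([-T,T]\times\R^3)}\to 0$, which is the hypothesis required by Proposition~\ref{P:Stab}. For this, given $\eps>0$, I would pick $w_\eps\in C^\infty_c(\R\times\R^3)$ with $\|w-w_\eps\|_{\dot S^1(\R)\cap L^{10}_{t,x}(\R\times\R^3)}<\eps$ and split $e_n$ into a smooth piece (coming from $w_\eps$) and a remainder. The lower-order nonlinearities $N_j(\tilde u_n^\eps)$ for $j=2,3,4$ are killed by a pure scaling computation that leaves a prefactor $\lambda_n^{(5-j)/2}$ times global (hence finite) norms of $w_\eps$. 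The genuinely hard term is the linear piece $-\gamma\,\overline{\tilde u_n^\eps}$, which I plan to handle via integration by parts in $s$ using
\[
e^{i(t-s)\Delta}e^{i\gamma s}=\bigl[i(\gamma-\Delta)\bigr]^{-1}\partial_s\!\bigl[e^{i(t-s)\Delta}e^{i\gamma s}\bigr];
\]
at frequencies $|\xi|\sim\lambda_n^{-1}$ where $\overline{\tilde u_n^\eps}$ is concentrated, the multiplier $[i(\gamma-\Delta)]^{-1}$ effectively contributes a factor $\lambda_n^2$, and after tracking the rescalings, the two boundary terms and the interior term each contribute $O_\eps(\lambda_n^2)$ (the $\lambda_n^{-2}$ Jacobian in the interior term is cancelled by the $\lambda_n^2$ gain, while $\partial_\sigma\nabla w_\eps\in L^1_\sigma L^2_y$ is controlled by the smoothness and compact support of $w_\eps$). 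The non-smooth remainders coming from $w-w_\eps$ are bounded using the valid dual Strichartz pair $(q',r')=(1,2)$: by Minkowski and scale invariance,
\[
\|\nabla(\tilde u_n - \tilde u_n^\eps)\|_{L^1_tL^2_x([-T,T]\times\R^3)}\le 2T\|w-w_\eps\|_{L^\infty_\sigma\dot H^1_y(\R)}<2T\eps,
\]
with analogous H\"older-type estimates giving $O(\eps)$-bounds on the $N_j$-remainders; letting $n\to\infty$ and then $\eps\to 0$ closes this step.

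With these ingredients in place, Proposition~\ref{P:Stab} produces, for $n$ large, a solution $u_n$ to \eqref{eq:cq3} on $[-T,T]$ with $u_n(0)=\phi_n$ satisfying $\|u_n-\tilde u_n\|_{\dot S^1([-T,T])}\to 0$, which gives \eqref{1012}. For the asymptotic approximations \eqref{147} and \eqref{148}, I will use density of $C^\infty_c(\R\times\R^3)$ in $L^{10}_{\sigma,y}$ and $L^{10/3}_{\sigma,y}$ to pick $\phi_\eps$, $\psi_\eps$ close respectively to $w$ and $\nabla w$ in those norms; scale invariance transfers these approximations to the original $(t,x)$ coordinates on $[-T,T]$, and the already-established closeness of $u_n$ to $\tilde u_n$ (in $L^{10}_{t,x}$ via Sobolev embedding on the admissible Strichartz pair $(10,30/13)$, and in $L^{10/3}_{t,x}$ for $\nabla u_n$ directly from $\dot S^1$) finishes the argument. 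The main obstacle throughout is the linear error $-\gamma\,\overline{\tilde u_n}$: it is neither pointwise small nor small in any obvious Strichartz-dual norm, and extracting its smallness requires the high-frequency integration-by-parts identity above, which in turn forces the density reduction to $C^\infty_c$ approximants of $w$ so that the time derivative $\partial_\sigma w_\eps$ appearing after integration by parts is under control.
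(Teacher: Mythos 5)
Your overall architecture is the same as the paper's: take the energy-critical NLS solution $w$ (with data $\phi$ or scattering to $e^{i\sigma\Delta}\phi$), form $\tilde u_n(t,x)=e^{-i\gamma t}\lambda_n^{-1/2}w(\tfrac{t-t_n}{\lambda_n^2},\tfrac{x-x_n}{\lambda_n})$, note the error $e_n=-\gamma\overline{\tilde u_n}-\sum_{j=2}^4N_j(\tilde u_n)$, verify the smallness hypothesis of Proposition~\ref{P:Stab}, and read off \eqref{147}--\eqref{148} by density in $L^{10}_{t,x}$ and $L^{10/3}_{t,x}$. The scaling computation for the smooth pieces of $N_2,N_3,N_4$ and the non-resonance/integration-by-parts idea for the linear term (the multiplier $(\gamma+|\xi|^2)^{-1}$ giving a $\lambda_n^2$ gain at frequencies $\sim\lambda_n^{-1}$) are both sound and are essentially the mechanism the paper uses.

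However, there is a genuine gap in how you dispose of the remainder $w-w_\eps$ for the linear term. You cannot choose $w_\eps\in C^\infty_c(\R\times\R^3)$ with $\|w-w_\eps\|_{\dot S^1(\R)}<\eps$: the $\dot S^1$ norm contains the component $L^\infty_\sigma\dot H^1_y$, and since $w$ scatters, $\|w(\sigma)\|_{\dot H^1_y}$ tends to a nonzero constant as $\sigma\to\pm\infty$, so $\|w-w_\eps\|_{L^\infty_\sigma\dot H^1_y}\gtrsim\|\phi\|_{\dot H^1_x}$ for any compactly supported $w_\eps$. Consequently your bound $\|\nabla(\tilde u_n-\tilde u_n^\eps)\|_{L^1_tL^2_x([-T,T]\times\R^3)}\le 2T\|w-w_\eps\|_{L^\infty_\sigma\dot H^1_y}$ cannot be made small; in fact, since the rescaled time window $\{\tfrac{t-t_n}{\lambda_n^2}:|t|\le T\}$ sweeps out arbitrarily long intervals, the left-hand side is comparable to $T\|\phi\|_{\dot H^1_x}$ for large $n$, so the tail of the linear error $-\gamma\overline{\tilde u_n}$ is simply not controlled by any crude dual-Strichartz estimate. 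This tail is precisely the hard part: the paper handles it by splitting time into $|t-t_n|\le\lambda_n^2T_2$ (short, contributing $O(\lambda_n^2T_2)$) and the complementary regions where $w$ is $\delta$-close in $\dot H^1$ to the free waves $e^{i\sigma\Delta}v_\pm$ (replacement cost $O(T\delta)$, with $\delta=\delta(T)$ small), and then applying the non-resonance computation \emph{to the free evolution itself} (see \eqref{113}), where the $s$-integral can be evaluated globally in time via Plancherel, producing the factor $\lambda_n^2(2|\xi|^2+\gamma\lambda_n^2)^{-1}|\xi|\widehat{v_\pm}(\xi)\to0$ by dominated convergence; a final interpolation with the crude $O(T)$ bound in $L^2_t\dot H^{1,6}_x$ yields smallness in the $L^\infty_tL^2_x\cap L^{10/3}_{t,x}$ norm required by Proposition~\ref{P:Stab}. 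Your integration by parts survives in this scheme (applied to $e^{i\sigma\Delta}v_\pm$ rather than to a compact bump), but without some substitute for the compact-support reduction at large rescaled times, your argument as written does not close.
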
					
						
\begin{proof}
As \eqref{eq:cq3} is space-translation invariant, without loss of generality we may assume that $x_n\equiv0$.

We proceed via a perturbative argument.  Specifically, using a solution to the defocusing energy-critical NLS, we will construct an approximate solution $\tilde u_n$ to \eqref{eq:cq3} with initial data asymptotically matching $\phi_n$.  This approximate solution will have good space-time bounds inherited from the solution to the defocusing energy-critical NLS.  Using the stability result Proposition~\ref{P:Stab}, we will then deduce that for $n$ sufficiently large, there exist true solutions $u_n$ to \eqref{eq:cq3} with $u_n(0)=\phi_n$  that inherits the space-time bounds of $\tilde u_n$, thus proving~\eqref{1012}. We turn to the details. 

If $t_n\equiv 0$, let $v$ be the solution to the defocusing energy-critical NLS
\begin{align}\label{EC}
(i\partial_t + \Delta)v = |v|^4 v
\end{align}
with initial data $v(0) = \phi$.  If $t_n\lambda_n^{-2}\to \pm\infty$, let $v$ be the solution to \eqref{EC} which scatters in $\dot{H}^1_x$ to $e^{it\Delta}\phi$ as $t\to\pm \infty$.  By the main result in \cite{CKSTT:gwp}, we have
\begin{align*}
\|v\|_{\dot S^1(\R)}\leq C\big(\|\phi\|_{\dot H^1_x}\big).
\end{align*}

We are now in a position to introduce the approximate solutions $\tilde u_n$ to \eqref{eq:cq3}.  For $n\geq 1$, we define
\begin{equation*}
\tilde{u}_n(t, x):=e^{-i\gamma t} \lambda_n^{-\frac{1}{2}}v\big(\tfrac{t-t_n}{\lambda_n^2},\tfrac{x}{\lambda_n}\big).
\end{equation*}
The phase factor $e^{-i\gamma t}$ is necessary; it replaces the linear factor in equation \eqref{eq:cq3} by a non-resonant term; see \eqref{tilde error} below.

Note that 
\begin{align}\label{1013}
\|\tilde u_n\|_{\dot S^1(\R)} =\|v\|_{\dot S^1(\R)}\leq C\big(\|\phi\|_{\dot H^1_x}\big).
\end{align}
Moreover, $\tilde u_n(0)$ asymptotically matches the initial data $u_n(0)=\phi_n$; indeed, by construction, we have
$$
\|\tilde{u}_n(0)-\phi_n\|_{\dot{H}^1_x}= \bigl\|v\bigl(-\tfrac{t_n}{\lambda_n^2}\bigr)-e^{-i\tfrac{t_n}{\lambda_n^2}\Delta}\phi\bigr\|_{\dot{H}^1_x}\to 0 \qtq{as} n\to \infty.
$$

To invoke the stability result Proposition~\ref{P:Stab} and deduce claim \eqref{1012}, it remains to show that $\tilde u_n$ is an approximate solution to \eqref{eq:cq3} on the interval $[-T,T]$ as $n\to \infty$.  A computation yields
\begin{align}\label{tilde error}
e_n:=(i\partial_t+\Delta-2\gamma\Re ) \tilde u_n - N(\tilde u_n) = -\gamma \overline{\tilde u_n} -\sum_{j=2}^4 N_j(\tilde u_n).
\end{align}
To establish \eqref{1012}, we have to verify that the error $e_n$ satisfies the smallness condition in \eqref{small error} for $n$ sufficiently large.

Let $\delta>0$ to be chosen later.  There exist $T_1,T_2>0$ sufficiently large so that
\begin{gather}
\|v\|_{L_{t,x}^{10}(\{|t|>T_1\}\times\R^3)}<\delta, \label{1047}\\
\|v(t)-e^{it\Delta}v_{\pm}\|_{\dot{H}_x^1}<\delta\quad\text{for}\quad \pm t>T_2,\label{1048}
\end{gather}
where $v_{\pm}$ denote the asymptotic states for the solution $v$. Note that the existence of $v_\pm$ is a consequence of the global space-time bounds for $v$, as discussed in \cite{CKSTT:gwp}.

We first estimate the contribution of the higher order terms appearing in $e_n$ on the space-time slab $[-T,T]\times\R^3$. Defining
	$$I_n=\{\vert t-t_n\vert\leq\lambda_n^2 T_1\}\cap[-T,T]\quad
	\text{and}\quad
	I_n^c=\{\vert t-t_n\vert>\lambda_n^2 T_1\}\cap[-T,T],$$
we use Strichartz, H\"older, \eqref{1013} and \eqref{1047} to obtain
\begin{align*}
&\Bigl\|\int_0^t e^{i(t-s)\Delta} \nabla \sum_{k=2}^4 N_k(\tilde u_n)(s)\, ds \Bigr\|_{L_t^\infty L_x^2 \cap L_t^{10}L_x^{\frac{30}{13}}}\\
&\lesssim \|\nabla N_2(\tilde u_n)\|_{L_t^{\frac{20}{19}}L_x^{\frac{30}{16}}} +\|\nabla N_3(\tilde u_n)\|_{L_t^{\frac{5}{4}}L_x^{\frac{30}{19}}} + \|\nabla N_4(\tilde u_n)\|_{L_t^{\frac{20}{13}}L_x^{\frac{30}{22}}}\\
& \lesssim \|\nabla \tilde u_n\|_{L_t^{10}L_x^{\frac{30}{13}}} \Bigl\{\|\tilde u_n\|_{L_t^{\frac{20}{17}}L_x^{10}(I_n\times\R^3)} 
	+ \|\tilde u_n\|_{L_t^{\frac{20}{17}}L_x^{10}(I_n^c\times\R^3)}\Bigr\}\\
&\quad+ \|\nabla \tilde u_n\|_{L_t^{10}L_x^{\frac{30}{13}}} \|\tilde u_n\|_{L_{t,x}^{10}}\Bigl\{\|\tilde u_n\|_{L_t^{\frac{5}{3}}L_x^{10}(I_n\times\R^3)} + \|\tilde u_n\|_{L_t^{\frac{5}{3}}L_x^{10}(I_n^c\times\R^3)}\Bigr\}\\
& \quad +\|\nabla \tilde u_n\|_{L_t^{10}L_x^{\frac{30}{13}}} \|\tilde u_n\|_{L_{t,x}^{10}}^2\Bigl\{\|\tilde u_n\|_{L_t^{\frac{20}{7}}L_x^{10}(I_n\times\R^3)} + \|\tilde u_n\|_{L_t^{\frac{20}{7}}L_x^{10}(I_n^c\times\R^3)}\Bigr\}\\
&\lesssim_{\|\phi\|_{\dot H^1_x}} \sum_{k=2}^4 \big\{(\lambda_n^2T_1)^{\frac{5-k}4} + T^{\frac{5-k}4}\delta\big\}.
\end{align*}
Taking $\delta$ sufficiently small depending on $T$ and $n$ sufficiently large, we see that this contribution is acceptable.

Next we consider the contribution of the linear term appearing in $e_n$, again on the space-time slab $[-T,T]\times\R^3$.  First, we observe that by Strichartz and \eqref{1013}, we have
\begin{align}\label{tilde e_n 1}
\Bigl\|\int_0^t e^{i(t-s)\Delta} \overline{\tilde u_n(s)}\, ds \Bigr\|_{L_t^2 \dot H_x^{1,6}([-T,T]\times\R^3)}
\lesssim \| \tilde u_n\|_{L_t^1\dot H^1_x([-T,T]\times\R^3)} \lesssim_{\|\phi\|_{\dot H^1_x}} T.
\end{align}

To continue, using \eqref{1048} we cover $\R$ by three disjoint intervals $I_n^0$ and $I_n^{\pm}$ such that 
\begin{align}\label{110}
|I_n^0|\leq 2\lambda_n^{2}T_2 \qtq{and} \bigl\|\tilde u_n - e^{-i\gamma t} e^{i(t-t_n)\Delta}\bigl[\lambda_n^{-\frac12} v_{\pm}\bigl(\tfrac{\cdot}{\lambda_n}\bigr)\bigr] \bigr\|_{L_t^\infty \dot H^1_x(I_n^{\pm}\times\R^3)}<\delta.
\end{align}
By Strichartz, H\"older, \eqref{1013}, and \eqref{110}, we have
\begin{align}\label{111}
\Bigl\|\int_0^t e^{i(t-s)\Delta} \chi_{I_n^0}(s)\overline{\tilde u_n(s)}\, ds \Bigr\|_{L_t^\infty \dot H^1_x([-T,T]\times\R^3)}
\lesssim \|\tilde u_n\|_{L_t^1\dot H^1_x(I_n^0\times\R^3)} \lesssim_{\|\phi\|_{\dot H^1_x}} \lambda_n^{2}T_2.
\end{align}
Using the triangle inequality, Strichartz, and \eqref{110},
\begin{align}\label{112}
\Bigl\|\int_0^t e^{i(t-s)\Delta} & \chi_{I_n^{\pm}}(s)\overline{\tilde u_n(s)}\, ds \Bigr\|_{L_t^\infty \dot H^1_x([-T,T]\times\R^3)}\\
&\lesssim T\delta + \Bigl\|\int_0^t e^{i(t+t_n-2s)\Delta} \chi_{I_n^\pm}(s)e^{i\gamma s}\lambda_n^{-\frac12} \overline{v_{\pm}}\bigl(\tfrac{\cdot}{\lambda_n}\bigr)\, ds \Bigr\|_{L_t^\infty \dot H^1_x([-T,T]\times\R^3)}.\notag
\end{align}
Now for any $-T\leq a<b\leq T$, an application of Plancherel gives
\begin{align}\label{113}
\Bigl\|\int_a^b e^{is(\gamma-2\Delta)}\lambda_n^{-\frac12} \overline{v_{\pm}}\bigl(\tfrac{\cdot}{\lambda_n}\bigr)\, ds \Bigr\|_{\dot H^1_x}\notag
&=\Bigl\|\int_a^b e^{is(\gamma+2|\xi|^2)}|\xi|\lambda_n^{\frac52}  \widehat{\overline{v_{\pm}}}(\xi\lambda_n)\, ds \Bigr\|_{L^2_\xi}\notag\\
&\lesssim \bigl\| (\gamma +2|\xi|^2)^{-1}|\xi|\lambda_n^{\frac52}  \widehat{\overline{v_{\pm}}}(\xi\lambda_n) \bigr\|_{L^2_\xi}\notag\\
&\lesssim \bigl\| \tfrac{\lambda_n^2}{2|\xi|^2+\gamma \lambda_n^2} |\xi|\widehat{\overline{v_{\pm}}}(\xi)\bigr\|_{L^2_\xi},
\end{align}
which converges to zero as $n\to \infty$ by the dominated convergence theorem.  Collecting \eqref{111}, \eqref{112}, and \eqref{113}, we obtain that
\begin{align*}
\Bigl\|\int_0^t e^{i(t-s)\Delta} \overline{\tilde u_n(s)}\, ds \Bigr\|_{L_t^\infty \dot H_x^1([-T,T]\times\R^3)}
&\lesssim_{\|\phi\|_{\dot H^1_x}} \lambda_n^{2}T_2 + T\delta +o(1) \qtq {as} n\to \infty.
\end{align*}
Interpolating with \eqref{tilde e_n 1} and taking $\delta$ sufficiently small depending on $T$ and taking $n$ sufficiently large, we see that the contribution of the linear term in $e_n$ is also acceptable.  This completes the proof of \eqref{1012}.

Finally, we turn to \eqref{147} and \eqref{148}.  For $\eps>0$, we approximate $v$ by $\phi_{\eps}, \psi_{\eps} \in C_c^{\infty}(\R\times\R^3)$ such that
\begin{align*}
\|v-\phi_{\eps}\|_{L^{10}_{t,x}(\R\times\R^3)} < \tfrac{\eps}{2} \qtq{and} \|\nabla v-\psi_{\eps}\|_{L^{\frac{10}{3}}_{t,x}(\R\times\R^3)}< \tfrac{\eps}{2}
\end{align*}
and take $n$ sufficiently large so that
\begin{align*}
\|u_n-\tilde u_n\|_{L_{t,x}^{10}\cap L_t^{\frac{10}3}\dot H^{1,\frac{10}3}_x([-T,T]\times\R^3)} < \tfrac{\eps}{2}.
\end{align*}
The two claims now follow easily from the triangle inequality.
\end{proof}						

Finally we turn to the proof of Theorem~\ref{T:weak}.

\begin{proof}[Proof of Theorem~\ref{T:weak}]
As mentioned above, by Lemma~\ref{L:bounds survive} and Theorem~\ref{thm:gwp} we have that $u$ and all of the  $u_n$ are global in time solutions to \eqref{eq:cq3}.

Fix $T>0$.  We will show that for any subsequence in $n$ there exists a further subsequence so that along that subsequence, $u_n(t)\rightharpoonup u(t)$ weakly in $\dot H^1_x$ for all $t\in [-T,T]$.  As the limit is independent of the original subsequence, this will prove the theorem.

Given a subsequence in $n$, we apply Proposition~\ref{P:LPD} to $u_n(0)-u_0$ and pass to a further subsequence to obtain the decomposition
\begin{align*}
u_n(0) - u_0= \sum_{j=1}^J \phi_n^j + w_n^J \qtq{with} \phi_n^j(x):=e^{-it_n^j\Delta}\bigl[(\lambda_n^j)^{-\frac12}\phi^j\bigl(\tfrac{x-x_n^j}{\lambda_n^j}\bigr)\bigr],
\end{align*}
which satisfies the conclusions of that proposition.  By hypothesis, $u_n(0)-u_0\rightharpoonup 0$ weakly in $\dot H^1_x$; using also \eqref{122} and \eqref{123}, this implies that for all $j\geq 1$ we must have
\begin{align}\label{915}
w_n^J\rightharpoonup 0 \text{ weakly in } \dot H^1_x \qtq{and} (\lambda_n^j)^{-1} + |t_n^j| + |x_n^j|\to \infty \qtq{as} n\to \infty.
\end{align}
Indeed, one can first prove the divergence of the parameters by a contradiction argument. Briefly, if some $(\lambda_n^{j})^{-1}+\vert t_n^{j}\vert+\vert x_n^{j}\vert$ were to remain bounded as $n\to\infty$ then one could use \eqref{122} and \eqref{123} to deduce that $\phi^{j}=0$, a contradiction. Once the divergence of the parameters is established, the weak convergence of $w_n^J$ to zero then follows. 

Throughout the proof we write
$$
\phi_n^0(x):= e^{-it_n^0\Delta}\bigl[(\lambda_n^0)^{-\frac12}u_0\bigl(\tfrac{x-x_n^0}{\lambda_n^0}\bigr)\bigr] \qtq{with parameters} \lambda_n^0\equiv 1, \, t_n^0\equiv 0, \, x_n^0\equiv 0.
$$
In view of \eqref{915}, the decomposition
$$
u_n(0) = \sum_{j=0}^J \phi_n^j + w_n^J
$$
satisfies the conclusions of Proposition~\ref{P:LPD}.

We next construct nonlinear profiles associated to each $\phi_n^j$.  If $j$ conforms to the first scenario described in Proposition~\ref{P:LPD}, then \eqref{119} and Lemma~\ref{L:bounds survive} guarantee that $\phi_n^j\in \mathcal E$ and moreover, $\|\nabla \Re  \phi_n^j\|_{L_x^2}\leq \delta_\gamma$ and $E(\phi_n^j)\leq \frac14\delta_\gamma$ if $\gamma\in(0, \frac23)$.  Thus by Theorem~\ref{thm:gwp} there exists a unique solution $u_n^j$ to \eqref{eq:cq3} with data $u_n^j(0) = \phi^j_n$; in particular, $\|u_n^j\|_{\dot S^1([-T,T])}<\infty$.  Note that $u_n^0$ is simply the solution $u$ from the statement of Theorem~\ref{T:weak}.

If $j$ conforms to the second scenario described in Proposition~\ref{P:LPD}, we let $u_n^j$ denote the solution to \eqref{eq:cq3} with data $u_n^j(0)=\phi_n^j$ constructed in Proposition~\ref{P:embedding}.

In either scenario, for all $\eps>0$ there exists $\phi_\eps^j, \psi_\eps^j\in C^\infty_c([-T,T]\times\R^3)$ such that
\begin{align}
&\limsup_{n\to \infty}\bigl\|u_n^j(t,x) - e^{-i\gamma t}(\lambda_n^j)^{-\frac12} \phi_\eps^j\bigl(\tfrac{t-t_n^j}{(\lambda_n^j)^2}, \tfrac{x-x_n^j}{\lambda_n^j} \bigr)\bigr\|_{L_{t,x}^{10}([-T, T]\times\R^3)}\leq \eps\label{147'}\\
&\limsup_{n\to \infty}\bigl\|\nabla u_n^j(t,x) - e^{-i\gamma t}(\lambda_n^j)^{-\frac32} \psi_\eps^j\bigl(\tfrac{t-t_n^j}{(\lambda_n^j)^2}, \tfrac{x-x_n^j}{\lambda_n^j} \bigr)\bigr\|_{L_{t,x}^{\frac{10}3}([-T, T]\times\R^3)}\leq \eps.\label{148'}
\end{align}
Note that the phase $e^{-i\gamma t}$ has no significance for $j$ conforming to the first scenario described in Proposition~\ref{P:LPD}; we simply incorporate it so as to treat both cases uniformly.  For these $j$, $\phi_\eps^j$ and $\psi_n^j$ are chosen to approximate $e^{i\gamma t} u_n^j$. 

As a consequence of \eqref{147'}, \eqref{148'}, and the asymptotic orthogonality of parameters given by \eqref{123}, for all $j\neq l$ we have
\begin{align}\label{decouple}
\|u_n^j u_n^l\|_{L_{t,x}^5} +\| u_n^j \nabla u_n^l\|_{L_t^5L_x^{\frac{15}{8}}}+ \| \nabla u_n^j \nabla u_n^l\|_{L_t^5L_x^{\frac{15}{13}}} \to 0,
\end{align}
where all space-time norms are over $[-T, T]\times\R^3$.

We next claim that for $j\geq 1$ we have 
\begin{align}
\label{nonlin profile to zero}
u_n^j(t)\rightharpoonup 0\quad\text{weakly in } \dot{H}_x^1(\R^3)
\quad\text{as }n\to\infty\quad\text{for every }t\in[-T,T].
\end{align} 
Indeed, if $j$ conforms to the first scenario, then \eqref{915} implies that $\vert x_n^j\vert\to\infty$ and hence \eqref{nonlin profile to zero} follows from the space-translation invariance of \eqref{eq:cq3} together with uniqueness. If $j$ conforms to the second scenario, then we have $\lambda_n^j\to 0$; however, as \eqref{eq:cq3} is not scale invariant, the argument just described does not apply directly. For this case, we recall that according to the construction in Proposition~\ref{P:embedding}, $u_n^j$ are asymptotically close in $L_t^\infty\dot{H}_x^1$ (up to a phase factor) to rescaled solutions to the defocusing energy-critical NLS as $n\to\infty$. Using the scaling symmetry and uniqueness for \eqref{EC}, we see that these rescaled solutions converge weakly to $0$ in $\dot{H}_x^1$ at each time; by construction, $u_n^j$ inherit this property.

To continue, we define
$$
u_n^J(t) = \sum_{j=0}^J u_n^j(t) + V^{-1}e^{-itH} V w_n^J.
$$
Note that $u_n^J(0)=u_n(0)$.  In what follows we will prove that for $n$ and $J$ sufficiently large, $u_n^J$ is an approximate solution to \eqref{eq:cq3} with uniform space-time bounds on $[-T,T]\times\R^3$.  An application of Proposition~\ref{P:Stab} will then yield that for any $\eps>0$ there exist $n$ and $J$ sufficiently large so that 
$$
\|u_n-u_n^J\|_{L_t^\infty \dot H^1_x([-T,T]\times\R^3)}\leq \eps.
$$
On the other hand, using \eqref{915} and \eqref{nonlin profile to zero} and recalling that $u=u_n^0$, we see that for $J$ fixed, $u_n^J(t)- u(t)\rightharpoonup 0$ weakly in $\dot H^1_x$ for all $t\in [-T,T]$.  Thus by the triangle inequality, for any $\varphi\in C^\infty_c(\R^3)$, we have
\begin{align*}
\bigl|\langle u_n(t) - u(t) , \varphi\rangle \bigr|
&\leq \bigl|\langle u_n(t) - u_n^J(t) , \varphi\rangle \bigr| + \bigl|\langle u_n^J(t) - u(t) , \varphi\rangle \bigr|\\
&\leq \|u_n(t)-u_n^J(t)\|_{\dot H^1_x} \|\varphi\|_{\dot H^{-1}_x} +  \bigl|\langle u_n^J(t) - u(t) , \varphi\rangle \bigr|\\
&\lesssim_\varphi \eps + o(1) \qtq{as} n\to \infty,
\end{align*}
which proves the claim in Theorem~\ref{T:weak}.

Thus it remains to show that for $n$ and $J$ sufficiently large, $u_n^J$ are approximate solutions to \eqref{eq:cq3} with uniform space-time bounds on $[-T,T]\times\R^3$.  

Our first step in this direction is the following lemma.

\begin{lemma}[Finite space-time bounds]\label{L:STB}  Given $T>0$, we have
\begin{align}\label{unJ finite}
\sup_J \limsup_{n\to \infty} \Bigl[\| u_n^J\|_{L_{t,x}^{10}([-T,T]\times\R^3)} +\| \nabla u_n^J\|_{L_t^{10}L_{x}^{\frac{30}{10}}([-T,T]\times\R^3)}\Bigr]\lesssim 1.
\end{align}
Moreover, for any $\eta>0$ there exists $J'=J'(\eta)$ sufficiently large so that
\begin{align}\label{1143}
\limsup_{n\to \infty} \Bigl[\Bigl\| \sum_{j=J'}^Ju_n^j\Bigr\|_{L_{t,x}^{10}([-T,T]\times\R^3)} +\Bigl\| \sum_{j=J'}^J \nabla u_n^j\Bigr\|_{L_t^{10}L_{x}^{\frac{30}{10}}([-T,T]\times\R^3)}\Bigr]\leq \eta
\end{align}
uniformly in $J\geq J'$.
\end{lemma}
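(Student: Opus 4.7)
My approach rests on two pillars: the Pythagorean decoupling \eqref{121}, which forces all but finitely many profiles to have small $\dot H^1_x$-norm, and the asymptotic decoupling \eqref{decouple}, which governs interactions between profiles inside the nonlinearity. I will first establish the tail estimate \eqref{1143} and then deduce \eqref{unJ finite} from it.

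From \eqref{121} applied to $\{u_n(0)-u_0\}$, the series $\sum_{j\geq 1}\|\phi^j\|_{\dot H^1_x}^2$ converges, so its tail is small. Given $\eta>0$, I fix $\delta=\delta(\eta,T)>0$ small (to be chosen by the bootstrap below) and pick $J'=J'(\delta)$ so that $\|\phi^j\|_{\dot H^1_x}<\delta$ for all $j\geq J'$. For each such $j$ I claim the small-data Strichartz bound
\begin{equation*}
\|u_n^j\|_{\dot S^1([-T,T])}\lesssim_T \|\phi^j\|_{\dot H^1_x}.
\end{equation*}
For scenario-1 profiles this is Corollary~\ref{C:small} combined with spatial translation invariance. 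For scenario-2 profiles it follows from a small-data refinement of Proposition~\ref{P:embedding}: when $\|\phi^j\|_{\dot H^1_x}$ is small, the approximating defocusing critical-NLS solution $v$ in that proof is supplied by small-data scattering for \eqref{EC} and enjoys $\|v\|_{\dot S^1(\R)}\lesssim \|\phi^j\|_{\dot H^1_x}$; since $\dot S^1$ is invariant under the scaling used to define $\tilde u_n^j$, and since the stability step in that proof is perturbative (with error $\to 0$ as $n\to \infty$), this bound is inherited by $u_n^j$ for $n$ large.

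To prove \eqref{1143}, set $\Phi_n:=\sum_{j=J'}^J u_n^j$, so that
\begin{equation*}
(i\partial_t+\Delta-2\gamma\Re)\Phi_n = N(\Phi_n)+\mathcal R_n,\qquad \mathcal R_n:=\sum_{j=J'}^J N(u_n^j)-N(\Phi_n).
\end{equation*}
Expanding the polynomial $N$, $\mathcal R_n$ is a finite linear combination of multilinear terms each containing at least two \emph{distinct} profiles. H\"older combined with the per-profile Strichartz bounds and \eqref{decouple} (in the mixed norms needed, obtained by interpolation against $\dot S^1$-bounded factors) yields $\|\nabla \mathcal R_n\|_{\dot N^0([-T,T])}\to 0$ as $n\to\infty$ for each fixed $J\geq J'$. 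Meanwhile, the asymptotic $\dot H^1_x$-orthogonality associated with \eqref{123} (standard for Schr\"odinger profile decompositions) gives $\|\Phi_n(0)\|_{\dot H^1_x}^2 \leq \delta^2+o_n(1)$. An application of Lemma~\ref{L:VStrichartz} to the displayed identity then yields
\begin{equation*}
\|\Phi_n\|_{\dot S^1([-T,T])} \leq C_T\|\Phi_n(0)\|_{\dot H^1_x} + C_T\sum_{k=2}^5 T^{\frac{5-k}{4}}\|\Phi_n\|_{\dot S^1([-T,T])}^k + C_T\|\nabla\mathcal R_n\|_{\dot N^0([-T,T])},
\end{equation*}
and a standard continuity argument, valid once $\delta$ is chosen small in terms of $T$, closes to $\|\Phi_n\|_{\dot S^1([-T,T])}\lesssim_T \delta+o_n(1)$. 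The Sobolev embedding $\dot H^{1,30/13}_x\hookrightarrow L^{10}_x$ then controls both norms in \eqref{1143}, and choosing $\delta$ small in terms of $\eta$ completes this step.

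For the global bound \eqref{unJ finite}, I decompose $u_n^J=\Phi_n^{0,J'-1}+\Phi_n^{J',J}+V^{-1}e^{-itH}Vw_n^J$ with $J'$ fixed via the preceding argument applied to any fixed small $\delta_0$. The tail $\Phi_n^{J',J}$ is controlled uniformly in $J$ by \eqref{1143}. The finite sum $\Phi_n^{0,J'-1}$ involves only $J'$ terms, each bounded in $\dot S^1([-T,T])$ by Theorem~\ref{thm:gwp} (scenario 1) or Proposition~\ref{P:embedding} (scenario 2) with constants depending on the fixed profiles $\phi^0,\ldots,\phi^{J'-1}$ but independent of $n$ and $J$; the triangle inequality finishes this piece. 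Finally, Lemma~\ref{L:VStrichartz} together with \eqref{121} gives $\|V^{-1}e^{-itH}Vw_n^J\|_{\dot S^1([-T,T])}\lesssim_T \|w_n^J\|_{\dot H^1_x}\lesssim 1$ uniformly in $n$ and $J$. The principal obstacle is the careful bookkeeping of \eqref{decouple} across all multilinear pieces of $\mathcal R_n$ (from quadratic up to quintic) together with the verification that the bound $C(\|\phi\|_{\dot H^1_x})$ in Proposition~\ref{P:embedding} is indeed $\lesssim \|\phi\|_{\dot H^1_x}$ in the small-data regime; both amount to routine re-examinations of earlier arguments in the paper.
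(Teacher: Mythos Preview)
Your argument is correct, but it is considerably more elaborate than the paper's. The paper bypasses the bootstrap entirely: rather than viewing $\Phi_n=\sum_{j=J'}^J u_n^j$ as an approximate solution and running a nonlinear continuity argument, it simply squares the norm and expands,
\[
\Bigl\|\sum_{j} u_n^j\Bigr\|_{L^{10}_{t,x}}^2
\leq \Bigl\|\Bigl(\sum_j u_n^j\Bigr)^2\Bigr\|_{L^5_{t,x}}
\leq \sum_j \|u_n^j\|_{L^{10}_{t,x}}^2 + \sum_{j\neq l}\|u_n^j u_n^l\|_{L^5_{t,x}},
\]
then bounds the diagonal via the per-profile small-data estimate \eqref{eventually small} together with the $\ell^2$-summability from \eqref{121}, and kills the off-diagonal by \eqref{decouple}. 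The same scheme handles the $L^{10}_t L^{30/13}_x$-norm of the gradient. This purely algebraic approach avoids having to control $\nabla\mathcal R_n$ in dual Strichartz norms across all multilinear pieces and requires no continuity argument.

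Two minor points on your write-up. First, your treatment of scenario-2 profiles via a small-data refinement of Proposition~\ref{P:embedding} is unnecessary: since $\|\phi_n^j\|_{\dot H^1_x}=\|\phi^j\|_{\dot H^1_x}$, Corollary~\ref{C:small} applies directly to the data $\phi_n^j$ in both scenarios and already gives $\|u_n^j\|_{\dot S^1([-T,T])}\lesssim_T\|\phi^j\|_{\dot H^1_x}$. Second, the condition you impose on $J'$ (namely $\|\phi^j\|_{\dot H^1_x}<\delta$ for each $j\geq J'$) does not by itself yield $\|\Phi_n(0)\|_{\dot H^1_x}^2\leq\delta^2+o_n(1)$; you need the stronger (but equally available) condition $\sum_{j\geq J'}\|\phi^j\|_{\dot H^1_x}^2<\delta^2$, which is exactly what the paper uses.
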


\begin{proof}
By the asymptotic decoupling of the $\dot H^1_x$-norm in \eqref{121}, there exists $J_0=J_0(T)$ such that for all $j\geq J_0$ we have 
$\|\phi^j\|_{\dot H^1_x} \leq \eta(T)$, where $\eta(T)$ is as in Corollary~\ref{C:small}.  In particular, 
\begin{align}\label{eventually small}
\|u_n^j\|_{\dot S^1([-T,T]}\lesssim_T\|\phi^j\|_{\dot H^1_x} \qtq{for all} j\geq J_0.
\end{align}

On the space-time slab $[-T,T]\times\R^3$ we use Lemma~\ref{L:VStrichartz} to estimate
\begin{align*}
\| u_n^J\|_{L_{t,x}^{10}}^2
&\lesssim  \|V^{-1}e^{-itH}Vw_n^J\|_{L_{t,x}^{10}}^2 + \Bigl\| \Bigl(\sum_{j=0}^J u_n^j\Bigr)^2\Bigr\|_{L_{t,x}^5} \\
&\lesssim_T \|w_n^J\|_{\dot H^1_x}^2 + \sum_{j=0}^J \|u_n^j\|_{L_{t,x}^{10}}^2 + \sum_{j\neq l} \|u_n^j u_n^l\|_{L_{t,x}^5} .
\end{align*}
This suffices to show that the first term on the left-hand side of \eqref{unJ finite} is finite.   Indeed, we use \eqref{121} and \eqref{eventually small} to bound the first two summands and \eqref{decouple} to bound the last (double) sum.  An analogous argument yields that the second term on the left-hand side of \eqref{unJ finite} is also bounded.

To prove \eqref{1143} one argues as above, taking $J'\geq J_0$ large enough that 
$$\sum_{j\geq J'} \|\phi^j\|^2_{\dot H^1_x}\lesssim \eta.$$ 
Note that this is possible because of \eqref{121}.\end{proof}

We next prove that the $u_n^J$ are indeed approximate solutions to \eqref{eq:cq3}. 

\begin{lemma}[Asymptotic solution to \eqref{eq:cq3}] \label{L:approx} We have
\begin{align*}
\lim_{J\to J^*}\limsup_{n\to \infty}\bigl\|\nabla \bigl[(i\partial_t +\Delta-2\gamma \Re ) u_n^J - N(u_n^J)\bigr]\bigr\|_{\dot{N}^0([-T, T])}=0.
\end{align*}
\end{lemma}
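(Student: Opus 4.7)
The plan is as follows. Because each $u_n^j$ solves \eqref{eq:cq3} and $V^{-1}e^{-itH}V w_n^J$ solves the linearization \eqref{eq:lin} of \eqref{eq:cq3} about zero, the quantity in the lemma collapses to a purely nonlinear object:
$$
(i\partial_t+\Delta-2\gamma\Re)u_n^J - N(u_n^J) = \sum_{j=0}^J N(u_n^j) - N(u_n^J).
$$
Writing $\Sigma_n^J:=\sum_{j=0}^J u_n^j$ and $r_n^J:=V^{-1}e^{-itH}V w_n^J$, so that $u_n^J=\Sigma_n^J+r_n^J$, I would split
$$
\sum_{j=0}^J N(u_n^j) - N(u_n^J) = \Bigl[\sum_{j=0}^J N(u_n^j) - N(\Sigma_n^J)\Bigr] + \bigl[N(\Sigma_n^J)-N(\Sigma_n^J + r_n^J)\bigr],
$$
and treat the two brackets separately, using $\dot N^0 = L^2_t L^{6/5}_x([-T,T]\times\R^3)$ as the dual Strichartz norm of choice.

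For the first bracket, I would expand each $N_k(\Sigma_n^J)$, $k\in\{2,3,4,5\}$, multilinearly; the diagonal terms cancel, leaving only products in which at least two distinct profiles $u_n^j$ and $u_n^l$ with $j\neq l$ appear. After applying $\nabla$ and distributing via Leibniz, each resulting summand contains a factor of the form $u_n^j u_n^l$, $u_n^j \nabla u_n^l$, or $\nabla u_n^j\nabla u_n^l$ with $j\neq l$; each of these vanishes in the space-time norms recorded in \eqref{decouple}. Combined with Hölder's inequality and the uniform Strichartz bounds of Lemma~\ref{L:STB}, this yields the desired $\dot N^0$-smallness.

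For the second bracket, I would use the pointwise bound $|N(\Sigma+r)-N(\Sigma)|\lesssim |r|\sum_{k=1}^4(|\Sigma|+|r|)^k$ and the chain rule. Every term in which a factor of $r$ is not differentiated is controlled by the $L^{10}_{t,x}$-smallness of $r_n^J$ supplied by \eqref{120} together with the uniform $\dot S^1$-bounds from Lemma~\ref{L:STB}. The main obstacle is the subclass of terms where the gradient lands on $r_n^J$ while every other factor comes from $\Sigma_n^J$ — typified by $|\Sigma_n^J|^4 \nabla r_n^J$ in the quintic case — for $\|\nabla r_n^J\|_{L^{10}_t L^{30/13}_x}$ is merely bounded, not small.

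To handle this obstacle I would further expand $(\Sigma_n^J)^k = \bigl(\sum_j u_n^j\bigr)^k$ by multi-indices. Off-diagonal contributions contain a pair of distinct profiles and therefore vanish by \eqref{decouple} once again. For each diagonal summand $(u_n^j)^k \nabla r_n^J$ I would invoke Proposition~\ref{P:embedding} to replace $u_n^j$ by its compactly supported $C^\infty_c$ model (modulo an arbitrarily small $L^{10}_{t,x}$ error) and then apply Corollary~\ref{C:smoothing}, which delivers
$$
\|\nabla r_n^J\|_{L^2_{t,x}(K)} \lesssim_K \|r_n^J\|_{L^{10}_{t,x}([-T,T]\times\R^3)}^{\frac13}\|w_n^J\|_{\dot H^1}^{\frac23}
$$
on any compact $K\subset[-T,T]\times\R^3$. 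The smallness of $\|r_n^J\|_{L^{10}_{t,x}}$ from \eqref{120} then closes the estimate. A scaling argument, leveraging the scale-invariance of $L^{10}_{t,x}$ and $\dot H^1_x$ together with the fact that rescaled versions of $w_n^J$ inherit both the weak convergence \eqref{122} and the $L^{10}_{t,x}$-smallness, ensures that this bound survives the concentration regime $\lambda_n^j\to 0$ uniformly in $n$. Summing the contributions of finitely many profiles and sending first $n\to\infty$ and then $J\to J^*$ completes the proof.
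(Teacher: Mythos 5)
Your overall skeleton matches the paper's: the same algebraic collapse to $\sum_j N(u_n^j)-N(u_n^J)$, the same two-bracket split, cross terms killed by \eqref{decouple}, smallness of $r_n^J=V^{-1}e^{-itH}Vw_n^J$ from \eqref{120}, and the correct identification of $(u_n^j)^k\nabla r_n^J$ as the dangerous term, treated for unit-scale profiles exactly as in the paper (compact models from \eqref{147'} plus Corollary~\ref{C:smoothing} and interpolation). The gap is in the concentrating case $\lambda_n^j\to 0$, which you dispose of with "a scaling argument, leveraging the scale-invariance of $L^{10}_{t,x}$ and $\dot H^1_x$." That argument does not go through as stated: the propagator $V^{-1}e^{-itH}V$ is \emph{not} scale-invariant (the symbol $H=\sqrt{-\Delta(2\gamma-\Delta)}$ carries the fixed scale $2\gamma$), so a rescaled $w_n^J$ is not propagated by the same flow, the constant in Lemma~\ref{L:VSmoothing}/Corollary~\ref{C:smoothing} degenerates on the shrinking space-time regions where the rescaled bump lives, and $\|\nabla r_n^J\|_{L^2_{t,x}}$ over such a region cannot be bounded by rescaling alone. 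This is precisely the point the paper has to work for: it splits $r_n^J$ in frequency at $(\lambda_n^j)^{-1}$, handles the low frequencies by Bernstein (the loss $(\lambda_n^j)^{-1}$ being exactly compensated by the $L^{10}_tL^{30/13}_x$ norm of the rescaled bump, which carries a factor $\lambda_n^j$), and for the high frequencies uses \eqref{HvsDelta} to replace $e^{-itH}$ by $e^{it(\gamma-\Delta)}$ up to an $O_T((\lambda_n^j)^2)$ error, after which the genuinely scale-invariant Schr\"odinger local smoothing applies to the rescaled data; this is also why \eqref{120} was engineered to control $e^{it\Delta}w_n^J$ in $L^{10}_{t,x}$ in addition to $V^{-1}e^{-itH}Vw_n^J$, a fact your argument never uses.

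A second, lesser issue: your final step "summing the contributions of finitely many profiles and sending first $n\to\infty$ and then $J\to J^*$" ignores that the number of diagonal terms $(u_n^j)^k\nabla r_n^J$ grows with $J$, while each is small only in the double limit (with constants depending on $j$ through $\phi^j_\eps$). One needs the truncation at $J'=J'(\eta)$ independent of $J$, estimating the tail $\sum_{j\geq J'}u_n^j$ as a single block via \eqref{1143}, so that only a fixed finite number of profiles require the pointwise-in-$j$ argument; without this the double limit need not vanish.
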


\begin{proof}
Throughout the proof of the lemma, all space-time norms will be over $[-T,T]\times\R^3$.  Writing $\tilde w_n^J := V^{-1}e^{-itH}Vw_n^J$, we have
\begin{align*}
e_n^J&:= (i\partial_t +\Delta-2\gamma \Re ) u_n^J - N(u_n^J) \\
&= \sum_{j=0}^J N(u_n^j) - N\Bigl(\sum_{j=0}^J u_n^j\Bigr) + N\bigl(u_n^J-\tilde w_n^J\bigr) - N(u_n^J).
\end{align*}

Computations similar to those employed in the proof of Proposition~\ref{P:Stab} yield
\begin{align*}
\Bigl\|\nabla& \Bigl[\sum_{j=0}^J N(u_n^j) - N\Bigl(\sum_{j=0}^J u_n^j\Bigr) \Bigr]\Bigr\|_{\dot N^0}
\lesssim \sum_{k=2}^5 \sum_{j\neq l} \sum_{m=0}^J T^{\frac{5-k}4}\|u_n^l\nabla u_n^j\|_{L_t^5L_x^{\frac{15}{8}}} \|u_n^m\|_{L_{t,x}^{10}}^{k-2},
\end{align*}
which converges to zero as $n\to \infty$ in view of \eqref{decouple} and \eqref{eventually small}.

Thus, it remains to show that
\begin{align}\label{222}
\lim_{J\to J^*}\limsup_{n\to \infty}\bigl\|\nabla \bigl[N\bigl(u_n^J-\tilde w_n^J\bigr) - N(u_n^J)\bigr]\bigr\|_{\dot{N}^0([-T, T])}=0.
\end{align}
We argue as follows: First, we estimate
\begin{align*}
\bigl\|\nabla &\bigl[N\bigl(u_n^J-\tilde w_n^J\bigr) - N(u_n^J)\bigr]\bigr\|_{\dot N^0([-T, T])}\\
&\lesssim \|\nabla u_n^J\|_{L_t^{10}L_x^{\frac{30}{13}}}\|\tilde w_n^J\|_{L_{t,x}^{10}} \sum_{k=2}^5 T^{\frac{5-k}4} \bigl(\|u_n^J\|_{L_{t,x}^{10}}^{k-2} +\|\tilde w_n^J\|_{L_{t,x}^{10}}^{k-2}\bigr)\\
&\quad+ \|\nabla \tilde w_n^J\|_{L_t^{10}L_x^{\frac{30}{13}}}\sum_{k=2}^5 T^{\frac{5-k}4}\|\tilde w_n^J\|_{L_{t,x}^{10}}^{k-1}+\|u_n^J\nabla \tilde w_n^J\|_{L_t^{5}L_x^{\frac{15}{8}}}\sum_{k=2}^5 T^{\frac{5-k}4} \|u_n^J\|_{L_{t,x}^{10}}^{k-2}.
\end{align*}
That the first two summands above go to zero as $n\to \infty$ and then $J\to \infty$ follows from \eqref{120} and Lemma~\ref{L:STB}.  Thus, \eqref{222} will follow from Lemma~\ref{L:STB} once we establish
\begin{align}\label{223}
\lim_{J\to J^*}\limsup_{n\to \infty}\|u_n^J\nabla \tilde w_n^J\|_{L_t^{5}L_x^{\frac{15}{8}}([-T, T]\times\R^3)}=0.
\end{align}

We will prove that $\text{LHS\eqref{223}}\lesssim \eta$ for arbitrary $\eta>0$.  By the definition of $u_n^J$, the triangle inequality, and H\"older, we estimate
\begin{align*}
\|u_n^J\nabla \tilde w_n^J\|_{L_t^{5}L_x^{\frac{15}{8}}}
&\lesssim \|\tilde w_n^J\|_{L_{t,x}^{10}} \|\nabla \tilde w_n^J\|_{L_t^{10}L_x^{\frac{30}{13}}} + \Bigl \| \sum_{j=J'}^J u_n^j\Bigl\|_{L_{t,x}^{10}} \|\nabla \tilde w_n^J\|_{L_t^{10}L_x^{\frac{30}{13}}}\\
&\quad+ \Bigl \| \sum_{j=0}^{J'-1} u_n^j \nabla \tilde w_n^J\Bigr\|_{L_t^{5}L_x^{\frac{15}{8}}},
\end{align*}
where $J'=J'(\eta)$ is as in the statement of Lemma~\ref{L:STB}.  Using \eqref{120} and \eqref{1143}, we see that the contribution of the first two summands on the right-hand side of the formula above is acceptable. 

It remains to prove that
\begin{align}\label{250}
\lim_{J\to J^*}\limsup_{n\to \infty}\|u_n^j\nabla \tilde w_n^J\|_{L_t^{5}L_x^{\frac{15}{8}}([-T, T]\times\R^3)}=0 \qtq{for each} 0\leq j<J'.
\end{align}

Assume first that $0\leq j<J'$ conforms to the first scenario in Proposition~\ref{P:LPD}.  Fix $\eps>0$.  Invoking \eqref{147'} and using the triangle inequality, H\"older, interpolation, and Corollary~\ref{C:smoothing}, we estimate
\begin{align*}
&\|u_n^j\nabla \tilde w_n^J\|_{L_t^{5}L_x^{\frac{15}{8}}}\\
&\leq \|u_n^j(t,x) - e^{-i\gamma t} \phi_\eps^j (t,x-x_n^j)\|_{L_{t,x}^{10}} \|\nabla \tilde w_n^J\|_{L_t^{10}L_x^{\frac{30}{13}}}
	+ \|\phi_\eps^j \nabla \tilde w_n^J (x+x_n^j)\|_{L_t^{5}L_x^{\frac{15}{8}}}\\
&\lesssim\eps+\|\phi_\eps^j\|_{L_t^\infty L_x^{12}}\|\nabla \tilde{w}_n^J(x+x_n^j)\|_{L_{t,x}^2(\supp\phi_\eps^j)}^{\frac14}
	\|\nabla\tilde{w}_n^J\|_{L_t^{10} L_x^{\frac{30}{13}}}^{\frac34}\\
&\lesssim_{\phi_\eps^j}\eps + \|\tilde{w}_n^J\|_{L_{t,x}^{10}}^{\frac1{12}}\| w_n^J\|_{\dot H_x^1}^{\frac16}
	\|\nabla\tilde{w}_n^J\|_{L_t^{10}L_x^{\frac{30}{13}}}^{\frac34}.
\end{align*}
By \eqref{120}, we see that \eqref{250} follows in this case. 

Now assume that $1\leq j<J'$ conforms to the second scenario in Proposition~\ref{P:LPD}.  We split $\tilde w_n^J$ into low and high frequencies and estimate them separately, starting with the low-frequency piece.  Fix $\eps>0$.  Arguing as before, using \eqref{147'}, H\"older, and Bernstein, we estimate
\begin{align*}
\|u_n^j P_{\leq (\lambda_n^j)^{-1}}\!\nabla \tilde w_n^J\|_{L_t^{5}L_x^{\frac{15}{8}}}
&\lesssim \eps + \bigl\| (\lambda_n^j)^{-\frac12} \phi_\eps^j\bigl(\tfrac{t-t_n^j}{(\lambda_n^j)^2}, \tfrac{x-x_n^j}{\lambda_n^j} \bigr) \bigr\|_{L_t^{10}L_x^{\frac{30}{13}}}\bigl\|P_{\leq (\lambda_n^j)^{-1}}\!\nabla \tilde w_n^J\bigr\|_{L_{t,x}^{10}}\\
&\lesssim \eps+ \|\phi_\eps^j\|_{L_t^{10}L_x^{\frac{30}{13}}}\|\tilde w_n^J\|_{L_{t,x}^{10}}.
\end{align*}
In view of \eqref{120}, this contribution is acceptable.

We now consider the high-frequency piece.  Using \eqref{HvsDelta} we can deduce
$$
\bigl\| P_{\geq N}  -  P_{\geq N} e^{it(\gamma-\Delta)} V^{-1}e^{-itH}V \bigr\|_{\dot H^1_x \to \dot H^1_x} \lesssim_T N^{-2}
$$
uniformly for $N\geq 1$ and $t\in[-T,T]$.  Thus
\begin{align*}
\|u_n^j &\nabla P_{\geq (\lambda_n^j)^{-1}}\tilde w_n^J\|_{L_t^{5}L_x^{\frac{15}{8}}([-T,T]\times\R^3)}\\
&\lesssim_T \|u_n^j P_{\geq (\lambda_n^j)^{-1}}\nabla e^{it\Delta} w_n^J\|_{L_t^{5}L_x^{\frac{15}{8}}([-T,T]\times\R^3)}
	+ (\lambda_n^j)^2 \| u_n^j \|_{L^{10}_{t,x}([-T,T]\times\R^3)} \|w_n^J\|_{\dot H^1_x} \\
&\lesssim_T \eps+ \| \phi_\eps^j \nabla e^{it\Delta}f_n\|_{L_t^{5}L_x^{\frac{15}{8}}(I_n\times\R^3)} +o(1) \qtq{as} n\to \infty, 
\end{align*}
where
$$
f_n(x) = P_{\geq 1} (\lambda_n^j)^{\frac12}w_n^J(\lambda_n^j x+x_n^j) \qtq{and} I_n = \{|t-t_n^j|\leq (\lambda_n^j)^2 T\}.
$$
To continue, we estimate in much the same manner as for $j$ conforming to the first scenario:
\begin{align*}
\| \phi_\eps^j &\nabla e^{it\Delta}f_n\|_{L_t^{5}L_x^{\frac{15}{8}}(I_n\times\R^3)}\\
&\lesssim \|\phi_\eps^j\|_{L_t^\infty L_x^{12}}\|\nabla e^{it\Delta}f_n\|_{L_{t,x}^2(\supp\phi_\eps^j \cap I_n\times\R^3)}^{\frac14}
	\|\nabla e^{it\Delta}f_n\|_{L_t^{10}L_x^{\frac{30}{13}}(I_n\times\R^3)}^{\frac34}\\
& \lesssim_{\phi_\eps^j} \|e^{it\Delta}f_n\|_{L_{t,x}^{10}(I_n\times\R^3)}^{\frac{1}{12}}
	\| f_n\|_{\dot H_x^1}^{\frac16}\|\nabla e^{it\Delta}f_n\|_{L_t^{10}L_x^{\frac{30}{13}}(I_n\times\R^3)}^{\frac34}\\
&\lesssim_{\phi_\eps^j} \|e^{it\Delta}w_n^J\|_{L_{t,x}^{10}([-T,T]\times\R^3)}^{\frac1{12}}
	\| w_n^J\|_{\dot H_x^1}^{\frac16}\|\nabla e^{it\Delta}w_n^J\|_{L_t^{10}L_x^{\frac{30}{13}}([-T,T]\times\R^3)}^{\frac34}.	
\end{align*}
where we have again used Corollary~\ref{C:smoothing}.  Recalling \eqref{120}, we see that the contribution of the high-frequency piece is acceptable.  This completes the proof of \eqref{250} and hence the proof of Lemma~\ref{L:approx}. 
\end{proof}

The final step in checking the hypotheses of Proposition~\ref{P:Stab}, which will finish the proof of Theorem~\ref{T:weak}, is to verify that
$$
\limsup_{J\to J^*}\limsup_{n\to \infty} \|u_n^J\|_{L_t^\infty \dot H^1_x([-T,T]\times\R^3)}\lesssim_T 1.
$$
In view of Lemma~\ref{L:VStrichartz}, we have
\begin{align*}
\|u_n^J\|_{L_t^\infty \dot H^1_x([-T,T]\times\R^3)} \lesssim_T \|u_n^J(0)\|_{\dot H^1_x} + \|\nabla e_n^J\|_{\dot N^0([-T,T])} + \|\nabla N(u_n^J)\|_{\dot N^0([-T,T])}.
\end{align*}
The requisite bounds on the right-hand side now follow from Lemmas~\ref{L:STB}~and~\ref{L:approx}. \end{proof}

						\section{Normal form transformation}\label{section:normal form}

In this section we discuss the normal form transformation that we use throughout the rest of the paper. The use of normal form transformations originates in work of Shatah \cite{Sha} and has since become a widely used technique in the setting of nonlinear dispersive equations. The transformation we use is similar to the one used by Gustafson, Nakanishi, and Tsai in the setting of the Gross--Pitaevskii equation \cite{GNT:dd, GNT:2d, GNT:3d}. 

Suppose $u$ is a solution to \eqref{eq:cq3}. As mentioned in the introduction, the quadratic terms in the nonlinearity are the most problematic when it comes to questions of long-time behavior; in particular, the worst terms are those containing $u_2=\Im u$, since in the diagonal variables we have $u_2=U^{-1}v_2$. We would like to find a normal form transformation that eliminates if not all, at least the worst quadratic terms. 

To this end, we let $B_1[\cdot,\cdot]$ and $B_2[\cdot,\cdot]$ be arbitrary bilinear Fourier multipier operators defined as in \eqref{eq:bilin}, with symmetric real-valued symbols $B_1(\xi_1,\xi_2)$ and $B_2(\xi_1,\xi_2)$.  Then 
	$$\tilde{u}:=u+B_1[u_1,u_1]+B_2[u_2,u_2]$$
satisfies the following equation
\begin{align}
(i\partial_t+\Delta)\tilde{u}-2\gamma\tilde{u}_1&=(3\gamma+4)u_1^2-(2\gamma-\Delta)B_1[u_1,u_1] \label{u1u1}
\\ &\quad+\gamma u_2^2-(2\gamma-\Delta)B_2[u_2,u_2]									\label{u2u2}
\\ &\quad+2i\big(\gamma u_1u_2+B_1[u_1,-\Delta u_2]-B_2[u_2,(2\gamma-\Delta)u_1]\,\big)
	\label{u1u2}
\\ &\quad+\text{cubic and higher order terms.}													\nonumber
\end{align}
	
While the symmetry of $B_1$ and $B_2$ makes it impossible to eliminate \emph{all} of the quadratic terms, we see that if we choose
	$$B_2(\xi_1,\xi_2)=\gamma(2\gamma+\vert\xi_1+\xi_2\vert^2)^{-1},
	\quad\text{i.e.}\quad B_2[f,g]=\gamma\jb^{-2}(fg),$$ 
then \eqref{u2u2}=0.  This allows us to eliminate the worst quadratic term, namely, the one containing two copies of $u_2$.  Moreover, choosing $B_1=B_2$ we get
	$$\tilde{u}=u+\gamma\jb^{-2}\vert u\vert^2,$$ 
with 
	\begin{align*}
	\eqref{u1u1}=(2\gamma+4)u_1^2 \quad\text{and}\quad \eqref{u1u2}=-4i\gamma\jb^{-2}\nabla\cdot[u_1\nabla u_2].
	\end{align*}
The derivative appearing in front of $u_2$ is a welcome addition in light of the problem at low frequencies.   

Similarly one can compute the higher order terms. In general,  one finds that for $k\in\{3,4,5\}$ the terms of order $k$ are given by
	$$N_k(u)+2i\big\{B_1[u_1,\Im(N_{k-1}(u))]-B_2[u_2,\Re(N_{k-1}(u))]\big\},$$
where the $N_k$ are as in \eqref{eq:cq3}.  Notice that there are no sixth order terms, since $B_1=B_2$ and $u_1\Im(N_5(u))=u_2\Re(N_5(u))$. 

Finally, we employ the transformation $Vu=u_1+iUu_2$ to diagonalize the equation. Our normal form transformation is therefore given by
	\begin{equation}\label{eq:nft}
	z:=M(u):=Vu+\gamma \jb^{-2}\vert u\vert^2,
	\end{equation}								
and $z$ satisfies the equation
	\begin{equation}\label{eq:cq z}
	(i\partial_t-H)z=N_z(u)
	\end{equation}
with
	\begin{align*}
	\Re[N_z(u)]&=U\Re[N(u)-\gamma|u|^2]\\
	&=U\big[(2\gamma+4)u_1^2+(\gamma+8)u_1^3+(\gamma+4)u_1u_2^2\\
	&\quad\quad\quad+(5u_1^4+6u_1^2u_2^2+u_2^4)+|u|^4u_1)\big],
	\\
	\Im[N_z(u)]&=-\tfrac{\nabla}{\jb^2}\cdot\big[4\gamma u_1\nabla u_2+\nabla(\gamma|u|^2u_2 +q^2u_2)\big]\\
	&=-\tfrac{\nabla}{\jb^2}\cdot\big[4\gamma u_1\nabla u_2\big]+U^2\big[(\gamma+4)u_1^2u_2+\gamma u_2^3+4u_1u_2|u|^2+ |u|^4u_2)\big].
	\end{align*}

We should briefly pause to point out the improvements present in equation \eqref{eq:cq z} with respect to \eqref{eq:cq3}.  Firstly, equation \eqref{eq:cq z} does not contain a quadratic term involving two copies of $u_2$.  Secondly, the remaining quadratic terms involving $u_2$ exhibit a derivative of this problematic term.  Lastly, all the remaining terms appear with a derivative at low frequencies, which is helpful throughout.

We next discuss the invertibility of the transformation \eqref{eq:nft}. Note that by using the definition of $\jb^{-2}$ we can rewrite the transformation as
	\begin{equation}\label{eq:M rewrite}
	M(u)=U^2u_1+\gamma\jb^{-2}q+iUu_2,
	\end{equation}
where $q=q(u)=2u_1+\vert u\vert^2.$ 

This normal form transformation is a homeomorphism from a neighborhood of zero in $\E$ onto a neighborhood of zero in $H^1_x$.  To prove this, we make use of the following neighborhoods:
\begin{align*}
\M_{E_0,\eps_0}&:=\{u\in\E:E(u)\leq E_0^2,\ \|u\|_{L_x^6}\leq\eps_0\},\\ 	
\N_{E_0',\eps_0'}&:=\{f\in H_x^1:\|f\|_{H_x^1}\leq CE_0',\ \|f\|_{L_x^6}\leq C\eps_0'\},
\end{align*}
where $C$ denotes an absolute constant depending on $\gamma$. 
	
\begin{proposition}\label{prop:normal1} Fix $E_0>0$ and $\eps_0>0$.
\begin{SL}
\item If $\gamma\in(\tfrac23,1)$, then $M:\M_{E_0,\eps_0}\to \N_{E_0,\eps_0+\eps_0^2}$ continuously.
\item If $\gamma=\tfrac23$, then $M:\M_{E_0,\eps_0}\to \N_{E_0+E_0^3,\eps_0+\eps_0^2}$ continuously.
\item If $\gamma\in(0,\tfrac23)$, then $M:\M_{E_0,\eps_0}\cap\{\|\nabla u_1\|_2^2\leq\delta_\gamma\}\to \N_{E_0,\eps_0+\eps_0^2}$ continuously, where $\delta_\gamma$ is as in Lemma~\ref{lemma:coercive2}.
\item Given $E_1>0$, there exists $\eps_1=\eps_1(E_1)$ and a continuous mapping 
$$
R:\N_{E_1,\eps_1}\to\E
$$ 
such that $M\circ R=Id$ on $\N_{E_1,\eps_1}$ and $\|R(f)\|_{\E}\lesssim E_1$ for $f\in \N_{E_1,\eps_1}$. 
\item Suppose $\gamma\geq \frac23$ .  Given $E_2>0$, there exists $\eps_2=\eps_2(E_2)$ so that  $M$ is a homeomorphism of $\M_{E_2,\eps_2}$ onto a subset of $H_x^1(\R^3)$ and has inverse $R$.  In particular, $M$ is injective on $\M_{E_2,\eps_2}$.  If $\gamma<\frac23$, then the analogous assertions hold on $\M_{E_2,\eps_2}\cap\{\|\nabla u_1\|_2^2\leq\delta_\gamma\}$.
\end{SL}
\end{proposition}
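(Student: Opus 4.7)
The strategy is to exploit the rewriting $M(u)=U^2u_1+\gamma\jb^{-2}q+iUu_2$ from \eqref{eq:M rewrite} to reduce parts (i)--(iii) to Fourier-multiplier bookkeeping. The operators $\jb U^2$, $\jb\cdot\jb^{-2}$, and $\jb U$ have symbols bounded by $|\xi|$, $1$, and $|\xi|$ respectively, which yields $\|M(u)\|_{H^1_x}\lesssim\|\nabla u\|_{L^2_x}+\|q(u)\|_{L^2_x}$; the three cases of (i)--(iii) then follow by substituting the coercivity bounds from Lemmas~\ref{lemma:coercive1} and \ref{lemma:coercive2}. For the $L^6_x$-bound, $\|Vu\|_{L^6_x}\lesssim\|u\|_{L^6_x}\leq\eps_0$ holds by Mikhlin boundedness of $U$, while $\|\jb^{-2}|u|^2\|_{L^6_x}\lesssim\||u|^2\|_{L^3_x}=\|u\|_{L^6_x}^2\leq\eps_0^2$ follows from Young's inequality with the Bessel kernel $G_2\in L^{6/5}(\R^3)$. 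Continuity of $M$ on each of these sets is obtained by repeating these estimates for $M(u)-M(v)$ together with the pointwise bound $\bigl||u|^2-|v|^2\bigr|\leq(|u|+|v|)|u-v|$.

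For (iv), I would first set $u_2:=U^{-1}f_2$; the Fourier-side identity $\|\nabla u_2\|_{L^2_x}=\|\jb f_2\|_{L^2_x}=\|f_2\|_{H^1_x}$ shows $u_2\in\dot H^1_x$ with the correct bound. Using the algebraic relation $U^2+2\gamma\jb^{-2}=1$, the equation $M(u)=f$ reduces to the scalar fixed-point problem $u_1=T(u_1):=f_1-\gamma\jb^{-2}(u_1^2+u_2^2)$, which I would attack by Banach contraction; the key nonlinear input is the bilinear estimate $\|\jb^{-2}(w\nabla w)\|_{L^2_x}\lesssim\|w\|_{L^6_x}\|\nabla w\|_{L^2_x}$, obtained from $\jb^{-2}\colon L^{3/2}\to L^2$ (Young with $G_2\in L^{6/5}$) combined with H\"older. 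To confirm that the resulting $u$ actually lies in $\E$, I would exploit the derived identity $q(u)=2f_1+U^2|u|^2$, obtained by rearranging $M(u)=f$, together with multiplier bounds on $U^2$. Part (v) is then a cross-reference: choose $\eps_2$ small enough (depending on $E_2$) that $M(\M_{E_2,\eps_2})$ fits inside the set $\N_{CE_2,\eps_1(CE_2)}$ on which the inverse $R$ from (iv) is defined; $R\circ M=\mathrm{Id}$ on $\M_{E_2,\eps_2}$ then follows from uniqueness of the Banach fixed point, which forces injectivity of $M$ and, together with the continuity of $R$ inherited from the contraction, the claimed homeomorphism. The $\gamma<\tfrac23$ variant runs the same argument on $\{\|\nabla u_1\|_{L^2_x}^2\leq\delta_\gamma\}$, where Lemma~\ref{lemma:coercive2} applies.

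The most delicate point I anticipate is making the fixed-point step in (iv) go through when $E_1$ is not small. The difficulty is that $u_2=U^{-1}f_2$ has $\|u_2\|_{L^6_x}$ controlled only by $\|\nabla u_2\|_{L^2_x}\sim E_1$ --- the low-frequency singularity of $U^{-1}$ prevents any gain from $\|f_2\|_{L^6_x}\leq\eps_1$ --- so the fixed source $\gamma\jb^{-2}u_2^2$ appearing in the $u_1$-equation has $\dot H^1_x$-norm of order $E_1^2$, which defeats a naive contraction on a ball of radius comparable to $E_1$. I expect the resolution to proceed in two stages: first absorb the fixed quadratic contribution by setting $\tilde f_1:=f_1-\gamma\jb^{-2}u_2^2$ and reformulating the equation as $u_1=\tilde f_1-\gamma\jb^{-2}u_1^2$, and then take $\eps_1$ small enough in terms of $E_1$ to ensure that the remaining $u_1^2$-self-interaction is strictly contractive on a ball whose radius is tuned to $\|\tilde f_1\|$ --- in effect, the size of $\tilde f_1$ is swallowed by the starting point of the iteration rather than fought against by the contraction radius.
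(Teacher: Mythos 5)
Your treatment of (i)--(iii) and the overall architecture of (iv)--(v) --- setting $u_2=U^{-1}f_2$ and running a contraction for $u_1$ --- matches the paper. The genuine gap is exactly at the point you flag as delicate, and your proposed workaround does not close it. The only bilinear estimate available for the self-interaction is of the form
$$
\bigl\|\jb^{-2}\bigl[(u_1+v_1)(u_1-v_1)\bigr]\bigr\|_{\dot H^1_x}\lesssim\bigl(\|u_1\|_{L^6_x}^{1/2}\|u_1\|_{\dot H^1_x}^{1/2}+\|v_1\|_{L^6_x}^{1/2}\|v_1\|_{\dot H^1_x}^{1/2}\bigr)\|u_1-v_1\|_{\dot H^1_x},
$$
so contraction forces the iteration ball to have \emph{small $L^6_x$ radius}, while its $\dot H^1_x$ radius is of size $E_1$. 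Since the ball must contain $\tilde f_1=f_1-\gamma\jb^{-2}[U^{-1}f_2]^2$ (the image of $0$ under your map), your scheme needs $\|\tilde f_1\|_{L^6_x}$ to be small; but if, as you assert, the crude bound $\|\jb^{-2}[U^{-1}f_2]^2\|_{L^6_x\cap\dot H^1_x}\sim E_1^2$ were all one had, then no choice of $\eps_1$ helps --- $\eps_1$ never enters that bound --- and the Lipschitz constant on any admissible ball is of size $E_1$ rather than $<1$. Renaming the source $\tilde f_1$ and ``tuning the radius to $\|\tilde f_1\|$'' does not change this, and it would in any case lose the claimed conclusion $\|R(f)\|_{\E}\lesssim E_1$.

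The missing ingredient (the paper's key step) is that the fixed source is in fact \emph{small}: split $U^{-1}f_2=\PLo U^{-1}f_2+\PHi U^{-1}f_2$, estimate the $L^6_x$ and $\dot H^1_x$ norms through $\nabla\jb^{-2}$, and let that derivative fall on the low-frequency factor, so that only the bounded multipliers $\nabla\PLo U^{-1}$ and $\PHi U^{-1}$ act on $f_2$ and one lands on $\|f_2\|_{L^3_x}\lesssim\|f\|_{L^6_x}^{1/2}\|f\|_{L^2_x}^{1/2}$. This gives
$$
\bigl\|\jb^{-2}[U^{-1}f_2]^2\bigr\|_{L^6_x}+\bigl\|\jb^{-2}[U^{-1}f_2]^2\bigr\|_{\dot H^1_x}\lesssim\|f\|_{L^6_x}^{1/2}\|f\|_{H^1_x}^{3/2}\lesssim\eps_1^{1/2}E_1^{3/2},
$$
after which the ball $\{\|u_1\|_{\dot H^1_x}\leq CE_1,\ \|u_1\|_{L^6_x}\leq C(\eps_1+\eps_1^{1/2}E_1^{3/2})\}$ is preserved and the contraction constant is small once $\eps_1=\eps_1(E_1)$ is small; this is how the low-frequency singularity of $U^{-1}$ is actually neutralized. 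A smaller issue of the same flavour: for $\|R(f)\|_{\E}\lesssim E_1$ you also need $\|q(u)\|_{L^2_x}\lesssim E_1$, and ``multiplier bounds on $U^2$'' alone give only $\|U^2(|u|^2)\|_{L^2_x}\lesssim\|u\|_{L^6_x}\|\nabla u\|_{L^2_x}\sim E_1^2$ (note $u\notin L^4_x$ in general, since $u_2=U^{-1}f_2$ is only in $L^6_x\cap\dot H^1_x$); the paper again uses a low/high frequency splitting together with the smallness of $\|u_1\|_{L^6_x}$ and interpolation on $f_2$ to recover a bound linear in $E_1$.
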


\begin{remark}
We warn the reader that just because $M(u)$ is small in $L^6_x$, one cannot guarantee that $u=(R\circ M)(u)$.  However, this would follow if $u$ were sufficiently small in $L_x^6$.  This subtlety contributes nontrivially to the complexity of the proof of Theorem~\ref{thm:wave ops1}.
\end{remark}

\begin{proof} The proofs of the first three claims parallel one another closely.  We will only present the details when $\gamma\in(\tfrac23,1)$.  

Let $u\in\M_{E_0,\eps_0}$. Recall from Lemma~\ref{lemma:coercive1} that
	$$\|u\|_{\E}^2\lesssim E(u)\lesssim E_0^2.$$ 

We first show that $M(u)\in\N_{E_0,\eps_0+\eps_0^2}$. Using the representation \eqref{eq:M rewrite}, we estimate
	\begin{align*}
\|M(u)\|_{H^1_x}\lesssim\|U^2u_1\|_{H^1_x}+\|\jb^{-2}q\|_{H_x^1}+\|Uu_2\|_{H_x^1}\lesssim\| u\|_{\dot{H}_x^1}+\|q\|_{L_x^2} \lesssim E_0.
	\end{align*}
Using the representation \eqref{eq:nft} and Sobolev embedding, we estimate
	\begin{align*}
\|M(u)\|_{L_x^6}&\lesssim\|u_1\|_{L_x^6}+\|Uu_2\|_{L_x^6}+\|\jb^{-2}\vert u\vert^2\|_{L_x^6}\\
&\lesssim \|u\|_{L_x^6}+\|\vert\nabla\vert^{1/2}\jb^{-2}\vert u\vert^2\|_{L_x^3}\\
&\lesssim \| u\|_{L_x^6} + \|u\|_{L_x^6}^2\lesssim \eps_0+\eps_0^2.
	\end{align*}	
Collecting these estimates, we conclude $M(u)\in \N_{E_0,\eps_0+\eps_0^2}.$ 

To prove the continuity of $M$, we note that for $u,v\in\E$ we may write
	$$M(u)-M(v)=U^2(u_1-v_1)+\gamma\jb^{-2}[q(u)-q(v)]+iU(u_2-v_2).$$
Estimating as above  we find
	$$\|M(u)-M(v)\|_{H_x^1}\lesssim d_\E(u,v).$$ 
	
We turn now to the fourth claim in the statement of the proposition. Let $f\in \N_{E_1,\eps_1}$. We aim to show that for $\eps_1=\eps_1(E_1)>0$ sufficiently small, we can find a unique $u\in\E$ such that $M(u)=f$, that is, 
	\begin{equation}\nonumber\left\{ \begin{array}{l} 
	u_2=U^{-1}f_2,
	\\ u_1=f_1-\gamma\jb^{-2}[U^{-1}f_2]^2-\gamma\jb^{-2}u_1^2.
	\end{array}\right.\end{equation}
To this end, we define 
	$$R_f(u_1):=f_1-\gamma\jb^{-2}[U^{-1}f_2]^2-\gamma\jb^{-2}u_1^2.$$
We will show that for $\eps_1=\eps_1(E_1)$ sufficiently small, $R_f$ is a contraction on
	$$B:=\{u_1\in\dot{H}_x^1:\|u_1\|_{\dot{H}_x^1}\leq CE_1,\ \|u_1\|_{L_x^6}\leq C(\eps_1+\eps_1^{1/2}E_1^{3/2})\}$$
with respect to the metric 	$d(u_1,v_1)=\|u_1-v_1\|_{\dot{H}_x^1}$, where $C$ denotes an absolute constant depending on $\gamma$.

We first show that $R_f:B\to B$. We have
	\begin{align}
	\|R_f(u_1)\|_{L_x^6}
	&\lesssim\|f_1\|_{L_x^6}
	+\big\|\langle\nabla\rangle^{-2}\big[ U^{-1}f_2\big]^2\big\|_{L_x^6}
	+\|\langle\nabla\rangle^{-2}u_1^2\|_{L_x^6}.
	\label{eq:contraction1}
	\end{align}
The first term in \eqref{eq:contraction1} is controlled by $\eps_1$ by assumption. For the second term in \eqref{eq:contraction1}, we use Sobolev embedding, Bernstein, and interpolation to estimate
	\begin{align*}
	\big\|\jb^{-2}\big[U^{-1}f_2\big]^2\big\|_{L_x^6}&\lesssim \big\|\tfrac{\nabla}{\jb^{2}}\big[\PLo U^{-1}f_2\big]^2\big\|_{L_x^2} + \big\|\tfrac{\nabla}{\jb^{2}}\bigl[(\PHi   U^{-1}f_2)\text{\O}(U^{-1}f_2)\bigr]\big\|_{L_x^2}
	\\ &\lesssim\|\nabla \PLo U^{-1}f_2\|_{L_x^{3}}\| U^{-1} f_2\|_{L_x^6} + \| \PHi  U^{-1}f_2\|_{L_x^{3}}\|U^{-1}f_2\|_{L_x^6}
	\\ &\lesssim \|f\|_{L_x^6}^{\frac12}\|f\|_{H_x^1}^{\frac32}.
	\end{align*}
For the third term in \eqref{eq:contraction1}, we have
	\begin{align*}
	\|\jb^{-2} u_1^2\|_{L_x^6}\lesssim\|\vert\nabla\vert^{\frac12}\jb^{-2} u_1^2\|_{L_x^{3}}\lesssim\|u_1\|_{L_x^6}^2.
	\end{align*}	
Thus, for $u_1\in B$ and $\eps_1=\eps_1(E_1)$ sufficiently small we obtain
	$$\|R_f(u_1)\|_{L_x^6}\leq C(\eps_1+\eps_1^{1/2}E_1^{3/2}).$$

To continue, we estimate
	\begin{equation}\label{eq:contraction2}
	\|R_f(u_1)\|_{\dot{H}_x^1}\lesssim\|f_1\|_{\dot{H}_x^1}+\big\|\jb^{-2}\big[U^{-1}f_2\big]^{2}\big\|_{\dot{H}_x^1} +\|\jb^{-2}u_1^2\|_{\dot{H}_x^1}.
	\end{equation}
The first term in \eqref{eq:contraction2} is controlled by $E_1$ by assumption. For the second term in \eqref{eq:contraction2}, we argue as above to find
	\begin{align*}
	\big\|\jb^{-2}\big[ U^{-1}f_2\big]^2\big\|_{\dot{H}_x^1} &\lesssim\big\|\tfrac{\nabla}{\jb^{2}}\big[\PLo U^{-1}f_2\big]^2\big\|_{L_x^2}
		+\big\|\tfrac{\nabla}{\jb^{2}}\bigl[(\PHi   U^{-1}f_2)\text{\O}(U^{-1}f_2)\bigr]\big\|_{L_x^2}
	\\&\lesssim \|f\|_{L_x^6}^{\frac12}\|f\|_{H_x^1}^{\frac32}.
	\end{align*}
For the third term in \eqref{eq:contraction2} we estimate
	$$\|\jb^{-2} u_1^2\|_{\dot{H_x^1}} \lesssim \|\vert\nabla\vert^{3/2}\jb^{-2} u_1^2\|_{L_x^{3/2}} \lesssim \|u_1\|_{L_x^6}\|\nabla u_1\|_{L_x^2}.$$	
Thus for $u_1\in B$ and $\eps_1=\eps_1(E_1)$ sufficiently small we have
	$$\|R_f(u_1)\|_{\dot{H}_x^1}\leq C E_1.$$
	
Collecting these estimates, we conclude that $R_f:B\to B$. 

Next we show that $R_f$ is a contraction with respect to the $\dot{H}_x^1$-norm. We first use Sobolev embedding, Bernstein, and interpolation to estimate
\begin{align}\notag
\|&\tfrac{1}{\jb^{2}}\big[(u_1+v_1)(u_1-v_1)\big] \|_{\dot{H}_x^1}\\ \notag
&\lesssim \|\tfrac{\vert\nabla\vert^{3/2}}{\jb^{2}}\big[(u_1+v_1)\PHi  (u_1-v_1)\big]\|_{L_x^{3/2}}
 +\|\tfrac{1}{\jb^{2}}\big[\PLo (u_1+v_1)\, \PLo (u_1-v_1)\big]\|_{\dot{H}_x^1}\\ \notag
&\quad +\|\tfrac{1}{\jb^{2}}\big[\PHi  (u_1+v_1)\, \PLo (u_1-v_1)\big]\|_{\dot{H}_x^1}\\ \notag
&\lesssim\|u_1+v_1\|_{L_x^6} \| \PHi  (u_1-v_1)\|_{L_x^2}	+ \|\nabla \PLo (u_1+v_1)\|_{L_x^{3}}\|u_1-v_1\|_{L_x^6} \\ \notag
&\quad +\|u_1+v_1\|_{L_x^6}\|\nabla \PLo (u_1-v_1)\|_{L_x^{3}}+\|\PHi  (u_1+v_1)\|_{L_x^{3}}\|u_1-v_1\|_{L_x^6}\\
&\lesssim \big(\|u_1\|_{L_x^6}^{\frac12}\|u_1\|_{\dot{H}_x^1}^{\frac12}+\|v_1\|_{L_x^6}^{\frac12}\|v_1\|_{\dot{H}_x^1}^{\frac12}\big)\|u_1-v_1\|_{\dot{H}_x^1}.\label{4}
\end{align}
In particular, for $\eps_1=\eps_1(E_1)$ sufficiently small we deduce that 
$$
\| R_f(u_1)-R_f(v_1) \|_{\dot{H}_x^1}\leq \tfrac12 \|u_1-v_1\|_{\dot{H}_x^1}.
$$

Therefore, by the contraction mapping theorem there exists a unique $u_1\in B$ such that $R_f(u_1)=u_1$. We define $R(f):=u_1+iU^{-1}f_2$.  By construction, we have $M(R(f))=f$.

It remains to see that $u:=R(f)\in\E$ with $\|u\|_{\E}\lesssim E_1$.  As $u_1\in B$, we have
	$$\|u\|_{\dot H^1_x}\lesssim \|u_1\|_{\dot{H}_x^1} + \|U^{-1}f_2\|_{\dot{H}_x^1} \lesssim E_1 +\|f_2\|_{H_x^1}\lesssim E_1.$$
Moreover, by H\"older,
\begin{align*}
\|q(u)\|_{L_x^2}=\|2f_1+U^2|u|^2\|_{L_x^2}&\lesssim\|f\|_{L_x^2}+\|U(|u|^2)\|_{L_x^2}\\
&\lesssim E_1 +	\|\nabla\text{\O}[(\PLo u)^2]\|_{L_x^2} + \|\text{\O}(u\PHi  u)\|_{L_x^2}\\
&\lesssim E_1 +	\|\nabla \PLo u\|_{L_x^2} \|\PLo u\|_{L_x^\infty}+ \|u\|_{L_x^6}\|\PHi  u\|_{L_x^3}\\
&\lesssim E_1 + \|u\|_{\dot H^1_x} \bigl[\|\PLo u\|_{L_x^\infty}+\|\PHi  u\|_{L_x^3}\bigr].
\end{align*}
Using Bernstein, H\"older, and interpolation, we estimate
\begin{align*}
\|\PLo u\|_{L_x^\infty}
\lesssim \|u_1\|_{L^6_x} + \|\PLo U^{-1}f_2\|_{L_x^{10}}
&\lesssim \|u_1\|_{L^6_x} + \|f_2\|_{L_x^{\frac{30}{13}}}\\
&\lesssim \|u_1\|_{L^6_x} + \|f_2\|_{L_x^6}^{\frac15}\|f_2\|_{L_x^2}^{\frac45}
\end{align*}
and
\begin{align*}
&\|\PHi  u\|_{L_x^3}
\lesssim \|\PHi  u_1\|_{L^3_x} + \|\PHi  f_2\|_{L^3_x}
\lesssim  \|u_1\|_{L^6_x}^{\frac12}\|u_1\|_{\dot H^1_x}^{\frac12} + \|f_2\|_{L^6_x}^{\frac12}\|f_2\|_{\dot H^1_x}^{\frac12}.
\end{align*}
Taking $\eps_1=\eps_1(E_1)$ sufficiently small, this proves $\|q(u)\|_{L^2_x}\lesssim E_1$. 

To complete the proof of the proposition, it remains to address part (v).  From \eqref{eq:nft} and \eqref{4}, we see that $M$ is injective on $\M_{E_2,\eps_2}$
provided $\eps_2$ is sufficiently small depending on $E_2$.  By shrinking $\eps_2$, if necessary, we can further ensure that $M(\M_{E_2,\eps_2})$ is contained in a region where $R$ is defined (this relies on all the other parts of the proposition).  It then follows that $M$ is a homeomorphism on $M(\M_{E_2,\eps_2})$ with inverse $R$.\end{proof}

The last result of this section relates the energy and the inverse of the normal form transformation; this will be useful in the proof of Theorem~\ref{thm:wave ops1}.  

\begin{lemma}\label{L:energy2}
Let $\{z_n\}_{n\geq 1}\subset H^1_x$ be uniformly bounded and assume that $z_n\to 0$ in $L_x^6$. Then
\begin{align*}
E(R(z_n))=\tfrac12\|z_n\|_{H^1_x}^2 +  o(1) \qtq{as} n\to \infty. 
\end{align*}
\end{lemma}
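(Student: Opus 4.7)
The plan is to leverage identity \eqref{E:CQEident} to reduce the claim to quantitative smallness of two nonlinear remainders. Setting $u_n := R(z_n)$ and noting $\|\jb z\|_{L^2_x}^2 = \|z\|_{H^1_x}^2$, the identity \eqref{E:CQEident} gives
\[
E(R(z_n)) - \tfrac12\|z_n\|_{H^1_x}^2 \;=\; \tfrac\gamma4\|U|u_n|^2\|_{L^2_x}^2 + \tfrac16\int q(u_n)^3\,dx,
\]
and it suffices to prove that each term on the right is $o(1)$. To this end I would collect three preparatory facts. First, the contraction estimate inside the proof of Proposition~\ref{prop:normal1}(iv) gives $\|u_{n,1}\|_{L^6_x}\lesssim\|z_n\|_{L^6_x}+\|z_n\|_{L^6_x}^{1/2}\|z_n\|_{H^1_x}^{3/2}\to 0$; interpolated against the uniform bound $\|u_{n,1}\|_{L^3_x}\lesssim\|u_n\|_{\E}+\|u_n\|_{\E}^2$ coming from $u_{n,1}=\tfrac12(q(u_n)-|u_n|^2)$ with $q(u_n),|u_n|^2\in L^3_x$, this yields $\|u_{n,1}\|_{L^4_x}\to 0$. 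Second, the local H\"older bound $\int_{B(y,R)}|z_n|^2 \lesssim R^2\|z_n\|_{L^6_x}^2 \to 0$ verifies the ``vanishing'' hypothesis of Lions' concentration-compactness lemma, promoting the assumption $\|z_n\|_{L^6_x}\to 0$ to $\|z_n\|_{L^p_x}\to 0$ for every $2<p\le 6$. Third, the representation $u_{n,2}=U^{-1}z_{n,2}$ combined with the Mikhlin boundedness of $\PHi U^{-1}$ and $\PLo\nabla U^{-1}$ gives $\|\PHi u_{n,2}\|_{L^p_x}+\|\PLo\nabla u_{n,2}\|_{L^p_x}\lesssim\|z_n\|_{L^p_x}\to 0$ on the same range $2<p\le 6$.

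The crucial algebraic identity is $q(u_n)=2z_{n,1}+U^2|u_n|^2$. Expanding $q(u_n)^3$ binomially, each mixed term carrying a factor of $z_{n,1}$ vanishes in the limit by H\"older, using $\|z_{n,1}\|_{L^3_x}\to 0$ against the uniform bound $\|U^2|u_n|^2\|_{L^3_x}\lesssim\|u_n\|_{L^6_x}^2$ supplied by Mikhlin. The residual term $\int(U^2|u_n|^2)^3$ is controlled by $\|U^2|u_n|^2\|_{L^3_x}^3$, and the splitting $|u_n|^2=u_{n,1}^2+u_{n,2}^2$ together with $\|u_{n,1}^2\|_{L^3_x}=\|u_{n,1}\|_{L^6_x}^2\to 0$ and $\|U^2 u_{n,2}^2\|_{L^3_x}\lesssim\|Uu_{n,2}^2\|_{L^3_x}$ (Mikhlin again) reduces matters to $\|Uu_{n,2}^2\|_{L^3_x}\to 0$. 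An entirely parallel decomposition handles $\|U|u_n|^2\|_{L^2_x}^2$: the $u_{n,1}^2$ contribution is bounded by $\|u_{n,1}\|_{L^4_x}^4\to 0$, leaving only $\|Uu_{n,2}^2\|_{L^2_x}\to 0$ to be proven.

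The technical heart of the argument, and the step I expect to require the most care, is this last smallness $\|Uu_{n,2}^2\|_{L^p_x}\to 0$ for $p\in\{2,3\}$. I would establish it through the commutator expansion
\[
Uu_{n,2}^2 = u_{n,2}\bigl(Uu_{n,2}\bigr) + [U,u_{n,2}]u_{n,2} = u_{n,2}\,z_{n,2} + [U,u_{n,2}]u_{n,2},
\]
exploiting the crucial relation $Uu_{n,2}=z_{n,2}$. The pointwise product is handled by H\"older: $\|u_{n,2}z_{n,2}\|_{L^p_x}\le\|u_{n,2}\|_{L^6_x}\|z_{n,2}\|_{L^{p^*}_x}\to 0$ for $p\in\{2,3\}$ with $p^*=3,6$ respectively. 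For the commutator I would invoke Lemma~\ref{L:CM} applied to the symbol $B(\xi_1,\xi_2)=U(\xi_1+\xi_2)-U(\xi_2)$: writing $B(\xi_1,\xi_2)=\xi_1\cdot m(\xi_1,\xi_2)$ with $m(\xi_1,\xi_2)=\int_0^1\nabla U(\xi_2+t\xi_1)\,dt$ a bounded smooth symbol satisfying the Coifman--Meyer bounds, one obtains $\|[U,u_{n,2}]u_{n,2}\|_{L^p_x}\lesssim\|\nabla u_{n,2}\|_{L^{p_1}_x}\|u_{n,2}\|_{L^{p_2}_x}$ for $\tfrac1p=\tfrac1{p_1}+\tfrac1{p_2}$. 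A further Littlewood--Paley split $u_{n,2}=\PLo u_{n,2}+\PHi u_{n,2}$ then reduces each piece to one of the three smallness ingredients collected earlier: the high-frequency piece inherits smallness in $L^p_x$ directly from $z_n$, while for the low-frequency--low-frequency commutator piece I would use Bernstein at frequency $\lesssim 1$ to trade integrability on $\PLo u_{n,2}$ against the smallness of $\PLo\nabla u_{n,2}$. Verifying the Coifman--Meyer condition on $B/\xi_1$ and the bookkeeping for the frequency decomposition is where the bulk of the technical care will lie.
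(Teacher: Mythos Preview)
Your approach via the identity \eqref{E:CQEident} is genuinely different from the paper's and is sound in outline.  The paper does not invoke \eqref{E:CQEident} at all; instead it computes each of the three pieces of $E(u)=\tfrac12\|\nabla u\|_{L^2}^2+\tfrac\gamma4\|q\|_{L^2}^2+\tfrac16\int q^3$ directly, proving the three approximations $R(z_n)=V^{-1}z_n+o(1)$ in $\dot H^1_x$, $q(R(z_n))=2\Re z_n+o(1)$ in $L^2_x$, and $q(R(z_n))=o(1)$ in $L^3_x$ by elementary Littlewood--Paley splits (no commutators, no bilinear multiplier theorems).  Your route via \eqref{E:CQEident} is more algebraically economical --- it isolates exactly the two remainder terms that must vanish --- but it shifts the analytic burden onto showing $\|U|u_n|^2\|_{L^2}\to0$ and $\int q(u_n)^3\to0$, which is where the paper's direct method is in fact lighter.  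Two minor points: Lions' lemma is overkill, since $\|z_n\|_{L^p}\lesssim\|z_n\|_{L^2}^{1-\theta}\|z_n\|_{L^6}^\theta\to0$ by plain interpolation for $2<p<6$; and your reduction $\|U^2 u_{n,2}^2\|_{L^3}\lesssim\|U u_{n,2}^2\|_{L^3}$ is in fact giving something away, since $U^2$ gains an extra factor of $|\nabla|\langle\nabla\rangle^{-1}$.

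There is, however, a real gap in the commutator step.  The symbol $U(\xi)=|\xi|/\langle\xi\rangle$ is not smooth at the origin: $\nabla U(\eta)=2\gamma\,\eta/(|\eta|\langle\eta\rangle^3)$ is bounded, but its derivatives blow up like $|\eta|^{-1}$ near $\eta=0$.  Consequently the bilinear symbol $m(\xi_1,\xi_2)=\int_0^1\nabla U(\xi_2+t\xi_1)\,dt$ does \emph{not} satisfy the Coifman--Meyer hypothesis $|\partial_{\xi_1}^\alpha\partial_{\xi_2}^\beta m|\lesssim(|\xi_1|+|\xi_2|)^{-|\alpha|-|\beta|}$: for instance $\partial_{\xi_2}^2 m$ has a $|\xi_2|^{-1}$ singularity along $\xi_2=0$ that is not controlled by $(|\xi_1|+|\xi_2|)^{-2}$ when $|\xi_1|\sim1$.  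Your Littlewood--Paley split does not remove this, because the low--low piece still sees the singularity.  In the paper, the one place a commutator with $U$ is estimated by Lemma~\ref{L:CM} (namely \eqref{eq:Nz hard2}) this is rescued by an explicit \emph{frequency separation} cutoff forcing $|\xi_2|\gtrsim1$, which kills the singularity; you have no such separation here.  The clean fix is to abandon the commutator and argue as the paper does for \eqref{316}--\eqref{317}: write $u_{n,2}^2=(\PLo u_{n,2})^2+\text{\O}(u_{n,2}\,\PHi u_{n,2})$, use that $U$ (or $U^2$) gains a factor of $|\nabla|$ at low frequency on the first piece (together with your smallness of $\|\PLo\nabla u_{n,2}\|_{L^p}$), and bound the second piece directly by H\"older using the smallness of $\|\PHi u_{n,2}\|_{L^p}$.
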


\begin{proof}
By Proposition~\ref{prop:normal1} (and its proof), we have that $R(z_n)$ exists for $n$ large and
\begin{align}\label{314}
\limsup_n\|R(z_n)\|_{\E} \lesssim 1 \qtq{and} \lim_{n\to \infty}\|\Re  R(z_n)\|_{L_x^6}=0.
\end{align}

We first claim that
\begin{align}\label{313}
R(z_n)= V^{-1} z_n + o(1) \quad\text{in $\dot H^1_x$ as $n\to \infty$.}
\end{align}
Indeed, from the construction of $R$ via the fixed point argument in Proposition~\ref{prop:normal1}, this amounts to proving that
\begin{align*}
\|\langle \nabla\rangle^{-2}[U^{-1}\Im z_n]^2\|_{\dot H^1_x} + \|\langle \nabla\rangle^{-2}[\Re  R(z_n)]^2\|_{\dot H^1_x} = o(1) \quad\text{as $n\to \infty$.}
\end{align*}
To see this, we use the decomposition
\begin{align}\label{315}
[U^{-1}\Im z_n]^2 = [\PLo U^{-1}\Im z_n]^2 + \text{\O}[(U^{-1}\Im z_n) \PHi  U^{-1}\Im z_n]
\end{align}
together with Bernstein, H\"older, \eqref{314}, and the hypotheses of the lemma to estimate
\begin{align*}
\|\langle \nabla\rangle^{-2}[U^{-1}\Im z_n]^2\|_{\dot H^1_x}
&\lesssim \||\nabla|[\PLo U^{-1}\Im z_n]^2\|_{L_x^2} + \|(U^{-1}\Im z_n) \PHi  U^{-1}\Im z_n\|_{L_x^2}\\
&\lesssim \|\Im z_n\|_{L_x^3}\|U^{-1}\Im z_n\|_{L_x^6} \\
&\lesssim \|z_n\|_{L_x^6}^{\frac12} \|z_n\|_{L_x^2}^{\frac12} \|z_n\|_{H^1_x} = o(1)\quad\text{as $n\to \infty$,}\\
\|\langle \nabla\rangle^{-2}[\Re  R(z_n)]^2\|_{\dot H^1_x}
&\lesssim \||\nabla|^{\frac32}\langle \nabla\rangle^{-2}[\Re  R(z_n)]^2\|_{L_x^{\frac32}}\\
&\lesssim \|\nabla \Re  R(z_n)\|_{L_x^2} \|\Re  R(z_n)\|_{L_x^6}= o(1)\quad\text{as $n\to \infty$.}
\end{align*}
This completes the proof of \eqref{313}.

We now turn our attention to the terms in the formula for $E(R(z_n))$ containing $q(R(z_n))$.  Using the representation \eqref{eq:M rewrite}, we observe that
$$
M(u) = \tfrac12 q(u) -\tfrac12 U^2(|u|^2) +iU\Im u \ \ \text{and so}\ \ q(R(z_n))= 2\Re  z_n + U^2(|R(z_n)|^2).
$$

We next claim that
\begin{align}\label{316}
q(R(z_n))= 2\Re  z_n + o(1) \quad\text{in $L^2_x$ as $n\to \infty$.}
\end{align}
To prove this, we note that $\Im R(z_n) = U^{-1}\Im z_n$ and use the decomposition \eqref{315}, as well as the analogous decomposition for $\Re  R(z_n)$.
Arguing as for \eqref{313}, we estimate
\begin{align*}
\|U^2&[U^{-1}\Im z_n]^2\|_{L^2_x}\\
&\lesssim \||\nabla|[\PLo U^{-1}\Im z_n]^2\|_{L_x^2} + \|(U^{-1}\Im z_n) \PHi  U^{-1}\Im z_n\|_{L_x^2}= o(1),\\
\|U^2&[\Re  R(z_n)]^2\|_{L^2_x}\\
&\lesssim \||\nabla|[\PLo \Re  R(z_n)]^2\|_{L_x^{\frac32}} + \|(\Re  R(z_n))\PHi  \Re  R(z_n)\|_{L_x^2}\\
&\lesssim \|\nabla \Re  R(z_n)\|_{L_x^2} \|\Re  R(z_n)\|_{L_x^6} + \|\Re  R(z_n)\|_{L_x^6} \|\PHi  \Re  R(z_n)\|_{L_x^3} \\
&\lesssim \|\nabla R(z_n)\|_{L_x^2} \|\Re  R(z_n)\|_{L_x^6}= o(1)\quad\text{as $n\to \infty$.}
\end{align*}
This completes the proof of \eqref{316}.

Finally, we note that
\begin{align}\label{317}
q(R(z_n))=o(1) \quad\text{in $L^3_x$ as $n\to \infty$.}
\end{align}
Indeed, arguing as above we find
\begin{align*}
\|\Re  z_n\|_{L_x^3} &\lesssim \|z_n\|_{L_x^2}^{\frac12} \|z_n\|_{L_x^6}^{\frac12} =  o(1)\quad\text{as $n\to \infty$,}\\
\|U^2[\Re  R(z_n)]^2\|_{L_x^3}&\lesssim \|\Re R(z_n)\|_{L_x^6}^2= o(1)\quad\text{as $n\to \infty$,}\\
\|U^2[\Im R(z_n)]^2\|_{L^3_x}
&\lesssim \||\nabla|[\PLo U^{-1}\Im z_n]^2\|_{L_x^3} + \|(U^{-1}\Im z_n) \PHi  U^{-1}\Im z_n\|_{L_x^3}\\
&\lesssim \|\Im z_n\|_{L_x^6} \||\nabla|^{-1} \Im z_n\|_{L_x^6} + \|U^{-1}\Im z_n\|_{L_x^6} \|\Im z_n\|_{L_x^6}\\
&\lesssim \|z_n\|_{L_x^6} \|z_n\|_{H^1_x}= o(1)\quad\text{as $n\to \infty$.}
\end{align*}

Putting together \eqref{313}, \eqref{316}, and \eqref{317} completes the proof of the lemma.
\end{proof}

					\section{Proof of Theorem~\ref{thm:wave ops1}}\label{section:fs1}		
In this section we prove Theorem~\ref{thm:wave ops1}. To this end, we fix $u_+\in\Hr$. We define $z_+=Vu_+\in H_x^1$ and we let $E_0:=\|z_+\|_{H_x^1}.$
	
We first claim that 
\begin{equation}\label{eq:L6 to zero}
\lim_{t\to\infty} \|e^{-itH}z_+\|_{L_x^6}= 0.
\end{equation}
Indeed, given $\eta>0$ we may find $\varphi\in \mathcal{S}(\R^3)$ such that $\|z_+-\varphi\|_{\dot{H}_x^1}<\eta$. Using the dispersive estimate \eqref{eq:dispersive} and Sobolev embedding, we find	
$$
\|e^{-itH}z_+\|_{L_x^6}\lesssim\| e^{-itH}\varphi\|_{L_x^6}+\|z_+-\varphi\|_{\dot{H}_x^1}\lesssim\vert t\vert^{-1}\|\varphi\|_{L_x^{6/5}}+\eta,
$$
which yields \eqref{eq:L6 to zero}.

Next, we choose $\eps_0$ sufficiently small depending on $E_0$ as in Proposition~\ref{prop:normal1}. By \eqref{eq:L6 to zero}, there exists $T_0>0$ such that $e^{-itH}z_+\in\mathcal{N}_{E_0,\eps_0}$ for $t\geq T_0$; thus for any $T\geq T_0$, we may define $R(e^{-iTH}z_+)\in\E$ so that $M(R(e^{-iTH}z_+))=e^{-iTH}z_+$, with $\|R(e^{-iTH}z_+)\|_{\E}\lesssim E_0.$ 

By Theorem~\ref{thm:gwp}, there exists a global solution $u^T\in C(\R;\E)$ to \eqref{eq:cq3} with $u^T(T)=R(e^{-iTH}z_+)$. Note that when $\gamma\in(0,\tfrac23)$, we require $E_0$ to be sufficiently small to guarantee that
\begin{equation}\label{eq:uniform bounds2}
\|\nabla\Re (u^T(0))\|_{L_x^2}^2\leq\delta_\gamma\quad\text{and}\quad E(u^T(0))\leq \tfrac14\delta_\gamma
\end{equation}
uniformly in $T$, where $\delta_\gamma$ is as in Theorem~\ref{thm:gwp}. We define 
$$
q^T:=q(u^T)=2u_1^T+\vert u^T\vert^2 \qtq{and} z^T:=M(u^T).
$$ 
Note that 
$(u^T,z^T)$ solves \eqref{eq:nft}--\eqref{eq:cq z} with $z^T(T)=e^{-iTH}z_+$. Furthermore, we have 
\begin{equation}\label{eq:uniform bounds}
\|z^T(t)\|_{H_x^1}+\|u^T(t)\|_{\dot{H}_x^1}+\|q^T(t)\|_{L_x^2\cap L_x^3}+\|u_1^T(t)\|_{L_x^3\cap L_x^6}\lesssim_{E_0} 1,
\end{equation}
uniformly in $t$ and $T$. 

As a consequence of \eqref{eq:uniform bounds}, there exists a sequence $T_n\to \infty$ and a function $u_0\in \dot H^1_x$ such that $u^{T_n}(0)\rightharpoonup u_0$ weakly in $\dot H^1_x$.  As \eqref{eq:uniform bounds} and \eqref{eq:uniform bounds2} imply that $\{u^{T_n}(0)\}$ satisfy the hypotheses of Theorem~\ref{T:weak}, we may apply this theorem to deduce that
\begin{align}\label{1200}
u^{T_n}(t) \rightharpoonup u^\infty(t) \quad \text{weakly in $\dot H^1_x$ for all $t\in \R$,}
\end{align}
where $u^\infty\in C(\R;\E)$ denotes the solution to \eqref{eq:cq3} with initial data $u^\infty(0)=u_0\in\E$.

We define $z^\infty:=M(u^\infty)$ and note that $(u^\infty,z^\infty)$ solves \eqref{eq:nft}--\eqref{eq:cq z}.  We will prove that $u^\infty$ is a solution to \eqref{eq:cq3} that satisfies the conclusions of Theorem~\ref{thm:wave ops1}. A first step in this direction is the following weak convergence result.

\begin{proposition}\label{P:weak convergence}  We have
$$
e^{itH}z^\infty(t)\rightharpoonup z_+ \qtq{weakly in $H_x^1$ as $t\to \infty$.}
$$
\end{proposition}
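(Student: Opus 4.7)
The plan is to follow the Gustafson--Nakanishi--Tsai strategy sketched in the introduction. Let $\alpha_n^\phi(t) := \langle \phi, e^{itH} z^{T_n}(t)\rangle$ for test functions $\phi \in \mathcal{S}(\R^3)$ whose Fourier transforms are compactly supported away from the origin; such $\phi$ are dense in $H^{-1}_x(\R^3)$. By construction, $\alpha_n^\phi(T_n) = \langle \phi, z_+\rangle$. The first task is to establish equicontinuity of $\{\alpha_n^\phi\}_n$ on $[T_0, \infty)$ uniformly in $n$: once this is done, for any $\varepsilon > 0$ there exists $T_\varepsilon \geq T_0$ with $|\alpha_n^\phi(t) - \langle\phi, z_+\rangle| < \varepsilon$ for $T_\varepsilon \leq t \leq T_n$, uniformly in $n$. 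Combined with the pointwise weak convergence (Step 1 below) and sending first $n \to \infty$ and then $t \to \infty$, this yields $\langle \phi, e^{itH}z^\infty(t)\rangle \to \langle \phi, z_+\rangle$; density of the test-function class in $H^{-1}_x$ together with the uniform $H^1_x$-bound on $e^{itH}z^\infty(t)$ (inherited from \eqref{eq:uniform bounds} via Step 1) then concludes the proof.

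Step 1 (pointwise weak convergence $z^{T_n}(t) \rightharpoonup z^\infty(t)$ in $H^1_x$). From Theorem~\ref{T:weak} we have $u^{T_n}(t) \rightharpoonup u^\infty(t)$ weakly in $\dot H^1_x$, which upgrades via Rellich--Kondrashov to strong $L^p_{x,\mathrm{loc}}$ convergence for $p<6$. The representation $M(u) = U^2 u_1 + \gamma\jb^{-2}q(u) + iUu_2$ combined with the smoothing of $\jb^{-2}$ allows me to pass to the limit in each term and identify the (unique, by the uniform $H^1_x$-bound) weak limit of $z^{T_n}(t)$ as $M(u^\infty(t)) = z^\infty(t)$.

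Step 2 (equicontinuity). Using $(i\partial_t - H)z^{T_n} = N_z(u^{T_n})$, I write
\[
\alpha_n^\phi(t_2) - \alpha_n^\phi(t_1) = -i\int_{t_1}^{t_2}\langle \phi, e^{isH}N_z(u^{T_n}(s))\rangle\,ds,
\]
and split $N_z$ by order. For the cubic through quintic terms I combine the dispersive decay afforded by \eqref{eq:dispersive} applied to smoothly-cut-off scattering data with the uniform bounds \eqref{eq:uniform bounds} and a Duhamel bootstrap to show that $u^{T_n}(s)$ inherits an integrable-in-$s$ rate of $L^6_x$-decay; this is enough to make the contribution of the higher-order terms absolutely integrable. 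For the quadratic terms of $N_z$, namely $U[(2\gamma+4)u_1^2]$ and $-\nabla\jb^{-2}\cdot[4\gamma u_1\nabla u_2]$, I integrate by parts in $s$, exploiting the non-resonance of the bilinear phases $H(\xi) \pm H(\eta) \pm H(\xi-\eta)$ arising when the outer propagator $e^{isH}$ is paired with the internal phases of $u^{T_n}\approx V^{-1}e^{-isH}Vu_+$. The restriction of $\xi$ to $\supp\hat\phi$, bounded away from the origin, rules out the most singular (low-frequency) resonance, so that Lemma~\ref{L:CM} applied to the resulting symbols, together with the uniform $H^1_x$-bound on $z^{T_n}$, controls both the boundary terms and the commutator terms produced when $\partial_s$ falls on $u^{T_n}$ (the latter reintroducing cubic and higher expressions that are integrable by the previous step).

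The main obstacle is the quadratic analysis in Step 2: verifying non-resonance of $H(\xi) \pm H(\eta) \pm H(\xi-\eta)$ on the relevant integration region (isolating and separately estimating any residual near-resonant subregion by direct dispersive decay) and checking that the symbols obtained after division by these phases remain Coifman--Meyer multipliers. The choice of $\hat\phi$ supported away from the origin is exactly what makes this non-resonance analysis considerably cleaner than in \cite{GNT:3d}, and it is also the point at which the poor low-frequency dispersion of \eqref{E:matrix prop} becomes irrelevant to the argument. Once equicontinuity is established for every individual $\phi$, a standard diagonal Arzel\`a--Ascoli argument over a countable dense subfamily of such test functions, combined with Step 1, completes the proof.
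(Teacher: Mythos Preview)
Your Step~1 matches the paper's argument. Step~2 has the right shape (prove the uniform-in-$n$ equicontinuity estimate \eqref{118} via Duhamel), but it misidentifies where the difficulty lies and invokes a bootstrap that is not available here.

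The central gap is the claim that ``$u^{T_n}(s)$ inherits an integrable-in-$s$ rate of $L^6_x$-decay'' via a Duhamel bootstrap. No such decay is proved or used in the paper, and for good reason: Theorem~\ref{thm:wave ops1} is a large-data result (when $\gamma\geq\tfrac23$), so there is no smallness with which to close a contraction. Establishing uniform $L^6_x$ decay would essentially amount to proving Theorem~\ref{thm:wave ops2} without the hypothesis \eqref{eq:scattering smallness condition}. The introduction flags exactly this obstruction: energy control gives only $u_2\in L^6_x$ with no rate, which ``is not sufficient decay to allow direct access to any of the integrable-in-time dispersive estimates.''

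You also have the roles of the quadratic and cubic terms reversed. The quadratic pieces $U[(2\gamma+4)u_1^2]$ and $\jb^{-2}\nabla\!\cdot[4\gamma u_1\nabla u_2]$ need no non-resonance analysis: the paper rewrites them (and most of the higher-order terms) algebraically via $q=2u_1+|u|^2$, and the \emph{time-independent} bounds $q\in L^2_x\cap L^3_x$, $u_1\in L^3_x\cap L^6_x$ from \eqref{eq:uniform bounds} then place all of $N_z^1(u)$ in $L^{12/11}_x$ uniformly; the decay of $e^{-isH}\varphi$ alone yields $|t_1|^{-1/4}$. The terms that genuinely require integration by parts in time are the cubic ones with multiple factors of $u_2$, namely $N_z^2(u)=U(u_1u_2^2)-\tfrac{i}{3}U^2(u_2^3)$. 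After frequency-localizing (this is where $\supp\hat\varphi\subset\{|\xi|\gtrsim N_0\}$ is used) and stripping off the pieces carrying a $P_{>N_0}u_2$, one is left with $P_{>20N_0}U\{aP_{>8N_0}\bar v\}$ with $a=[P_{\leq N_0}u_2]^2$. The paper then differentiates $\langle aP_{>8N_0}\bar v,\,e^{-itH}\tfrac{U}{2H}\varphi\rangle$ using the \emph{actual} equation $(i\partial_t-H)v=N_v(u)$---not an approximation $u^{T_n}\approx V^{-1}e^{-isH}Vu_+$, which would be circular---and controls the resulting boundary, commutator $[a,H]$, $\dot a$, and $N_v$ terms with \eqref{eq:uniform bounds} and Lemma~\ref{L:CM}.
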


Assuming Proposition~\ref{P:weak convergence} for now, we proceed with the proof of Theorem~\ref{thm:wave ops1}.  We begin by upgrading the weak convergence from Proposition~\ref{P:weak convergence} to strong convergence, namely,
\begin{align}\label{1}
\lim_{t\to \infty}\|z^\infty(t) - e^{-itH}z_+\|_{H^1_x}=0.
\end{align}

Using Lemma~\ref{L:bounds survive} combined with \eqref{1200} and Lemma~\ref{L:energy2} combined with \eqref{eq:L6 to zero}, we can first write
\begin{align}\label{957}
E(u^\infty) \leq \liminf_{n\to\infty} E(u^{T_n}) = \liminf_{n\to\infty} E(R(e^{-iT_nH}z_+)) = \tfrac12 \|z_+\|_{H^1_x}^2.
\end{align}

At this moment, it is tempting to attempt a Radon--Riesz style argument.  Recall that the Radon--Riesz theorem says that if $x_n\rightharpoonup x$
weakly in some Banach space $X$ and
$
\limsup F(x_n)  \leq F(x)  
$
for some uniformly convex function $F:X\to \R$, then $x_n\to x$ in norm.  (This is most often quoted in the case of a uniformly convex Banach space with $F$ being the norm.)

The ideas just sketched were adapted beautifully to the Gross--Pitaevskii setting treated in \cite{GNT:3d}.  As discussed in the introduction, those authors exploit
$$
E_{GP}(u) = \tfrac12 \| M(u) \|_{H^1_x}^2 + \tfrac14 \|U|u|^2\|_{L_x^2}^2 \geq \tfrac12 \| M(u) \|_{H^1_x}^2,
$$
which holds under no additional hypotheses.  As also discussed there (see \eqref{E:CQEident}, in particular) the energy functional for the cubic-quintic problem admits no such global inequality.  Correspondingly, we need to keep track of the structure of $z^\infty(t_n)$ as $t_n\to\infty$ and then demonstrate the requisite coercivity is available in this particular limiting regime.   To achieve the this goal we will use the following lemma.  Note that the result on $\tilde E$ plays a key role in controlling the kinetic energy of the real part when $\gamma < \frac23$.

\begin{lemma}\label{L:energy1}
Let $\{u_n\}_{n\geq 1}\subset \E$ be uniformly bounded.  Assume that we may write $u_n=\xi_n+r_n$, where $\xi_n$ satisfies
\begin{align*}
\sup_n\|\xi_n\|_{\Hr}\lesssim 1 \qtq{and} \lim_{n\to \infty}\|\xi_n\|_{L^3_x\cap L_x^6}=0.
\end{align*}
Then
\begin{align}\label{955}
E(u_n) =E(r_n) + \tfrac12\|V\xi_n\|_{H^1_x}^2 + \Re  \langle M(r_n), V\xi_n\rangle_{H^1_x} + o(1) \qtq{as} n\to \infty. 
\end{align}

Furthermore, if $\tilde E$ denotes the reduced energy defined via
$$
\tilde E(f) : = \int \tfrac14 |\nabla f|^2 + \tfrac\gamma8 |q(f)|^2\, dx = \tfrac12 E(f) - \tfrac1{12} \int q(f)^3\, dx,
$$
then
\begin{align}\label{956}
\tilde E(u_n) =\tilde E(r_n) + \tfrac14\|V\xi_n\|_{H^1_x}^2 + \tfrac12\Re  \langle M(r_n), V\xi_n\rangle_{H^1_x} + o(1) \qtq{as} n\to \infty. 
\end{align}
\end{lemma}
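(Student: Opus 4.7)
The plan is to expand $E(u_n)-E(r_n)$ pointwise around $r_n$, carry out a parallel direct computation of the target quantity $\tfrac12\|V\xi_n\|_{H^1_x}^2+\Re\langle M(r_n),V\xi_n\rangle_{H^1_x}$, match the principal terms, and then verify that all residual terms are $o(1)$. Setting $A_n := q(u_n)-q(r_n) = 2\xi_{n,1}+|\xi_n|^2+2\Re(\xi_n\overline{r_n})$ and using the identities $|\nabla u_n|^2-|\nabla r_n|^2 = |\nabla\xi_n|^2+2\Re(\nabla\xi_n\cdot\overline{\nabla r_n})$, $q(u_n)^2-q(r_n)^2 = 2q(r_n)A_n+A_n^2$, and $q(u_n)^3-q(r_n)^3 = 3q(r_n)^2 A_n+3q(r_n)A_n^2+A_n^3$, we will obtain
\begin{align*}
E(u_n)-E(r_n) &= \tfrac12\|\nabla\xi_n\|_{L^2_x}^2+\Re\!\int\!\nabla\xi_n\cdot\overline{\nabla r_n}\,dx+\tfrac\gamma2\!\int\! q(r_n)A_n\,dx+\tfrac\gamma4\!\int\! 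A_n^2\,dx\\
&\quad+\tfrac12\!\int\! q(r_n)^2 A_n\,dx+\tfrac12\!\int\! q(r_n)A_n^2\,dx+\tfrac16\!\int\! A_n^3\,dx.
\end{align*}

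On the target side, using $\jb U=|\nabla|$, we compute $\tfrac12\|V\xi_n\|_{H^1_x}^2 = \tfrac12\|\nabla\xi_n\|_{L^2_x}^2+\gamma\|\xi_{n,1}\|_{L^2_x}^2$, and from $M(r_n)=Vr_n+\gamma\jb^{-2}|r_n|^2$ together with self-adjointness of $\jb^{-1}$,
\[
\Re\langle M(r_n),V\xi_n\rangle_{H^1_x} = \Re\int\nabla\xi_n\cdot\overline{\nabla r_n}\,dx + \gamma\int\xi_{n,1}\,q(r_n)\,dx.
\]
Splitting $A_n = 2\xi_{n,1}+B_n$ with $B_n := |\xi_n|^2+2\Re(\xi_n\overline{r_n})$, the $2\xi_{n,1}$-contributions to $\tfrac\gamma2\int q(r_n)A_n\,dx$ and $\tfrac\gamma4\int A_n^2\,dx$ produce exactly $\gamma\int\xi_{n,1}q(r_n)\,dx$ and $\gamma\|\xi_{n,1}\|_{L^2_x}^2$, matching the target up to six residual error terms.

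The bulk of the work is then showing that each residual term vanishes. The key uniform bounds, driven by $\|\xi_n\|_{L^3_x\cap L^6_x}\to 0$ and H\"older (with the interpolation $L^3_x\cap L^6_x\subset L^4_x$), are $\|B_n\|_{L^2_x}\lesssim\|\xi_n\|_{L^4_x}^2+\|\xi_n\|_{L^3_x}\|r_n\|_{L^6_x}\to 0$ and $\|A_n\|_{L^3_x}\lesssim\|\xi_n\|_{L^3_x}+\|\xi_n\|_{L^6_x}(\|\xi_n\|_{L^6_x}+\|r_n\|_{L^6_x})\to 0$. Combined with $\|\xi_{n,1}\|_{L^2_x}\lesssim 1$ (from $\xi_n\in\Hr$), we also get $\|A_n\|_{L^2_x}\lesssim 1$; hence $q(r_n)=q(u_n)-A_n$ is uniformly bounded in $L^2_x$ (since $u_n\in\E$ is), and similarly $q(r_n)\in L^3_x$ uniformly. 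Each residual term is then controlled by a single H\"older estimate: $|\int q(r_n)B_n\,dx|\leq\|q(r_n)\|_{L^2_x}\|B_n\|_{L^2_x}$, $|\int q(r_n)^2 A_n\,dx|\leq\|q(r_n)\|_{L^3_x}^2\|A_n\|_{L^3_x}$, $|\int A_n^3\,dx|\leq\|A_n\|_{L^3_x}^3$, with analogous bounds for the remaining three, all of which vanish.

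For \eqref{956}, writing $\tilde E(f)=\tfrac12 E(f)-\tfrac{1}{12}\int q(f)^3\,dx$ reduces the claim to $\int q(u_n)^3-q(r_n)^3\,dx=o(1)$; using the factorization $q(u_n)^3-q(r_n)^3 = A_n[q(u_n)^2+q(u_n)q(r_n)+q(r_n)^2]$ and $\|A_n\|_{L^3_x}\to 0$ against the uniform $L^3_x$ bounds on $q(u_n), q(r_n)$ settles this. The overall argument is essentially a careful bookkeeping of H\"older estimates; the main (mild) obstacle is verifying that $q(r_n)$ is automatically uniformly bounded in $L^2_x\cap L^3_x$, even though $r_n$ is not assumed a priori to lie in $\E$.
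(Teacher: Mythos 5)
Your proposal is correct and follows essentially the same route as the paper: decompose $q(u_n)=q(r_n)+2\Re\xi_n+|\xi_n|^2+2\Re(\bar\xi_n r_n)$, observe that $q(r_n)$ is automatically uniformly bounded in $L^2_x\cap L^3_x$, expand the energy, discard the residual terms by H\"older using $\|\xi_n\|_{L^3_x\cap L^6_x}\to 0$, and identify the surviving terms with $\tfrac12\|V\xi_n\|_{H^1_x}^2+\Re\langle M(r_n),V\xi_n\rangle_{H^1_x}$. Your bookkeeping is simply a more explicit version of the paper's computation, and your derivation of \eqref{956} from the cubic cancellation matches the paper's remark that it can be read off from the same argument.
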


\begin{proof}
We will only prove \eqref{955}.  Claim \eqref{956} can be read off from the proof we give below.

To begin we observe that
$$
q(u_n) = q(r_n) + 2\Re  \xi_n + |\xi_n|^2+ 2\Re  (\bar \xi_n r_n).
$$
By hypothesis, $r_n=u_n-\xi_n$ is uniformly bounded in $L^6_x$.  Using this and our assumptions on $\xi_n$, we see that
\begin{align*}
q(u_n) &= q(r_n) + 2\Re  \xi_n +o(1) \quad\text{in $L^2_x$ as $n\to \infty$},\\
q(u_n) &= q(r_n) +o(1) \quad\text{in $L^3_x$ as $n\to \infty$}.
\end{align*}
Moreover, as $u_n$ is bounded in $\E$ and $\Re  \xi_n$ is bounded in $L_x^2$, we deduce that $q(r_n)$ is uniformly bounded in both $L_x^2$ and $L_x^3$.

Therefore, we obtain
\begin{align*}
E(u_n) &=E(r_n) + \int\tfrac12 |\nabla \xi_n|^2+ \Re (\nabla \bar\xi_n \nabla r_n) + \gamma q(r_n)\Re  \xi_n + \gamma(\Re  \xi_n)^2 \, dx + o(1)\\
&=E(r_n) + \tfrac12\|V\xi_n\|_{H^1_x}^2 + \Re  \langle (2\gamma-\Delta)M(r_n), V\xi_n\rangle_{L^2_x}+ o(1)\qtq{as} n\to \infty.
\end{align*}

This completes the proof of the lemma.
\end{proof}

We return now to the proof of \eqref{1}.  Let us begin by showing that 
\begin{align}\label{958}
E(u^\infty) \geq \tfrac12 \|z_+\|_{H^1_x}^2,
\end{align}
which combined with \eqref{957} fully identifies $E(u^\infty)$.  While natural, this is not (in and of itself) essential to the argument; it does, however, force us to control the contributions of parts of the energy with the unhelpful sign.  It will be this control that will ultimately allow us to complete the proof of \eqref{1}.

Let $t_n\to \infty$ be an arbitrary sequence.  We apply Lemma~\ref{L:energy1} with
\begin{align}\label{6}
u_n:= u^\infty(t_n) \qtq{and} \xi_n:= (Id \oplus P_{\geq N_n}) V^{-1} e^{- it_n H}z_+,
\end{align}
where $N_n\in 2^{\mathbb{Z}}$ converges to zero sufficiently slowly to guarantee that
\begin{align}\label{xi small}
\|\xi_n\|_{L_x^3\cap L_x^6} \to 0 \quad\text{as $n\to \infty$.}
\end{align}
Note that this is possible because of \eqref{eq:L6 to zero}.  In view of \eqref{955}, we obtain
\begin{align}\label{959}
E(u^\infty) &= E(r_n) + \tfrac12 \| (Id \oplus P_{\geq N_n}) e^{- it_n H}z_+\|_{H^1_x}^2 \notag\\
&\quad+ \Re  \langle M(r_n), (Id \oplus P_{\geq N_n}) e^{- it_n H}z_+\rangle_{H^1_x} +o(1) \quad\text{as $n\to \infty$.}
\end{align}

By Proposition~\ref{P:weak convergence}, $e^{it_nH}M(u^\infty(t_n))= e^{it_nH}z^\infty(t_n) \rightharpoonup z_+$ weakly in $H^1_x$.  On the other hand, by \eqref{xi small}, we have
\begin{align}
M(u^\infty(t_n))\label{962}
&= e^{-it_nH}z_+\! + M(r_n) - P_{\leq N_n} \Im e^{-it_nH}z_+\! + \gamma  \langle\nabla \rangle^{-2}[ |\xi_n|^2 + 2\Re  (\bar\xi_nr_n)]\notag\\
&=e^{-it_nH}z_+\! + M(r_n) + o(1) \quad\text{in $H^1_x$ as $n\to \infty$.}
\end{align}
Thus, we may deduce that
$$
e^{it_nH}M(r_n) \rightharpoonup 0 \quad\text{weakly in $H^1_x$ as $n\to \infty$.}
$$
Combining this with the dominated convergence theorem (which allows us to replace $P_{\geq N_n}$ by $Id$), \eqref{959} becomes
\begin{align}\label{960}
E(u^\infty) = E(r_n) + \tfrac12 \| z_+\|_{H^1_x}^2 +o(1) \quad\text{as $n\to \infty$.}
\end{align}

Arguing similarly and using \eqref{956} in place of \eqref{955}, we obtain
\begin{align}\label{961}
\tilde E(u^\infty) = \tilde E(r_n) + \tfrac14 \| z_+\|_{H^1_x}^2 +o(1) \quad\text{as $n\to \infty$.}
\end{align}

Note that \eqref{958} follows immediately from \eqref{960}, provided that $E(r_n)\geq 0$.  By Lemma~\ref{lemma:coercive1}, this is immediate if $\gamma\in [\frac23,1)$.  In view of Lemma~\ref{lemma:coercive2}, if $\gamma\in (0,\frac23)$ we simply have to verify that $\|\nabla \Re  r_n\|_{L_x^2}^2\leq \delta_\gamma$.  This however follows from \eqref{957} and \eqref{961}, provided $E_0$ is chosen sufficiently small depending on $\gamma$.

Combining \eqref{957} with \eqref{958} and \eqref{960}, we deduce that
\begin{align*}
E(u^\infty) = \tfrac12 \| z_+\|_{H^1_x}^2 \qtq{and} E(r_n)\to 0 \text{ as $n\to \infty$.}
\end{align*}
By the argument in the preceding paragraph, this implies
\begin{align}\label{7}
\|r_n\|_{\E}\to 0 \quad\text{as $n\to \infty$.}
\end{align}
Therefore, using the representation \eqref{eq:M rewrite} for $M$, we see that
\begin{align*}
\|M(r_n)\|_{H^1_x}
&\lesssim \|U\Im r_n\|_{H^1_x} + \|U^2 \Re  r_n\|_{H^1_x} + \| \langle\nabla \rangle^{-2}q(r_n)\|_{H^1_x} \\
&\lesssim \|r_n\|_{\dot H^1_x} + \|q(r_n)\|_{L_x^2}\to 0 \quad\text{as $n\to \infty$.}
\end{align*}
Combining this with \eqref{962}, we get
\begin{align*}
\|z^\infty(t_n) - e^{-it_nH} z_+\|_{H^1_x} \to 0 \quad \text{as $n\to \infty$.}
\end{align*}
As the sequence $t_n\to \infty$ was arbitrary, this completes the proof of \eqref{1}.

We next prove that \eqref{1} implies the conclusions of Theorem~\ref{thm:wave ops1}.  We first show that \eqref{1} implies \eqref{E:T:H1}.
Let $t_n\to\infty$ be an arbitrary sequence and define $u_n$ and $\xi_n$ as in \eqref{6}.  Using \eqref{xi small} and \eqref{7}, we deduce that $u^\infty(t_n)\to 0$ in $L_x^6$.  Furthermore, by \eqref{1} and \eqref{eq:L6 to zero}, we have that $z^\infty(t_n)\to 0$ in $L_x^6$.  Using Proposition~\ref{prop:normal1}(v), we find that $u^\infty(t_n)=R(z^\infty(t_n))$ for $n$ sufficiently large.  Arguing as in Lemma~\ref{L:energy2} and using \eqref{313}, we may write $u^\infty(t_n)=V^{-1}z^\infty(t_n)+o(1)$ in $\dot{H}_x^1$, which together with \eqref{1} yields \eqref{E:T:H1}.

We now turn to \eqref{E:T:E}.  We begin with the following strengthening of \eqref{eq:L6 to zero}:
\begin{align}\label{8}
\lim_{t\to\infty}\|U^{-1}e^{-itH}z_+\|_{L_x^6}=0.
\end{align}
Given $0<N<1$, we have
\begin{align*}
&\|U^{-1}P_{\leq N} e^{-itH}z_+\|_{L_x^6}\lesssim \|\nabla U^{-1}P_{\leq N}e^{-itH}z_+\|_{L_x^2}\lesssim\|P_{\leq N}z_+\|_{L_x^2},\\
&\|U^{-1}P_{>N}e^{-itH}z_+\|_{L_x^6}\lesssim N^{-1}\|e^{-itH}z_+\|_{L_x^6}.
\end{align*}
In view of \eqref{eq:L6 to zero}, choosing $N$ sufficiently small and then sending $t\to\infty$ yields \eqref{8}. 

Using \eqref{8}, we now show that the modification $\gamma\jb^{-2}\vert\ulin\vert^2$ appearing in \eqref{E:T:E} is negligible in the $\dot{H}_x^1$-norm. Indeed, we have the stronger statement
\begin{align}\label{10}
\|\jb^{-1}\vert\ulin(t)\vert^2\|_{\dot{H}_x^1}&\lesssim \| |\nabla|^{1/2}\ulin(t)\|_{L_x^3}\|\ulin(t)\|_{L_x^6} \notag\\
&\lesssim \|\nabla \ulin(t)\|_{L_x^2}\|U^{-1}e^{-itH}z_+\|_{L_x^6} \notag\\
&\lesssim \|z_+\|_{H_x^1}\|U^{-1}e^{-itH}z_+\|_{L_x^6}\to 0\qtq{as}t\to\infty.
\end{align}

It remains to show 
\begin{align}\label{9}
\lim_{t\to\infty}\bigl\|q\bigl(u^\infty(t)\bigr)-q\bigl(\ulin(t)-\gamma\jb^{-2}\vert\ulin(t)\vert^2\bigr)\bigr\|_{L_x^2}=0.
\end{align}
As demonstrated above, $u^\infty (t) = R(z^\infty (t))$ for $t$ sufficiently large and $z^\infty(t)\to 0$ in $L_x^6$ as $t\to \infty$.  Thus, arguing as for \eqref{316} and using \eqref{1}, we deduce that
$$
q(u^\infty(t))=2\Re \ulin(t) +o(1) \quad\text{in $L_x^2$ as $t\to \infty$}.
$$
 On the other hand, a straightforward computation yields
\begin{align*}
q\bigl(\ulin(t)-\gamma\jb^{-2}\vert\ulin(t)\vert^2\bigr) &= 2\Re \ulin(t) + U^2|\ulin(t)|^2 + \bigl[\gamma \jb^{-2}\vert\ulin(t)\vert^2\bigr]^2 \\
&\quad -2\gamma\bigl[\jb^{-2}|\ulin(t)|^2 \bigr] \Re \ulin(t).
\end{align*}
Thus, to prove \eqref{9} it suffices to show that the last three terms on the right-hand side above are $o(1)$ in $L_x^2$ as $t\to \infty$.  Indeed, we may estimate
\begin{align*} 
\|U^2|\ulin(t)|^2\|_{L_x^2} &\lesssim \|\jb^{-1}\vert\ulin(t)\vert^2\|_{\dot{H}_x^1},\\
\|\bigl[\jb^{-2}\vert\ulin(t)\vert^2\bigr]^2\|_{L_x^2}&\lesssim \||\nabla|^{\frac14}\jb^{-2}|\ulin(t)|^2\|_{L_x^3}^2
\lesssim \|U^{-1}e^{-itH}z_+\|_{L_x^6}^4,\\
\|\bigl[\jb^{-2}|\ulin(t)|^2 \bigr] \Re \ulin(t)\|_{L^2_x}&\lesssim \|U^{-1}e^{-itH}z_+\|_{L_x^6}^3,
\end{align*}
and so by \eqref{8} and \eqref{10}, we have
$$
q\bigl(\ulin(t)-\gamma\jb^{-2}\vert\ulin(t)\vert^2\bigr) = 2\Re \ulin(t)+o(1) \quad\text{in $L_x^2$ as $t\to \infty$}.
$$
This completes the proof of \eqref{9} and hence that of Theorem~\ref{thm:wave ops1}.

It remains to prove Proposition~\ref{P:weak convergence}.

\begin{proof}[Proof of Proposition~\ref{P:weak convergence}]
We first claim that
\begin{align}\label{1201}
z^{T_n}(t) \rightharpoonup z^\infty(t) \quad \text{weakly in $\dot H^1_x$ for all $t\in \R$.}
\end{align}
This relies in an essential way on Theorem~\ref{T:weak} via \eqref{1200}. Henceforth, we let $t\in\R$ be fixed. Using \eqref{1200} and Rellich--Kondrashov and passing to a subsequence, we have $u^{T_n}(t) \to u^\infty(t)$ strongly in $L^2_x(K)$ for any compact $K\subset \R^3$.  Now fix $\varphi\in C^\infty_c(\R^3)$.  Then $\langle \nabla \rangle^{-2} \varphi \in \mathcal S(\R^3)$; in particular, for any $\eps>0$ there exists $\tilde \varphi_\eps\in C^\infty_c(\R^3)$ such that
$$
\|\langle \nabla \rangle^{-2} \varphi -\tilde\varphi_\eps\|_{L_x^{3/2}}\leq \eps.
$$
Using this, H\"older, \eqref{eq:uniform bounds}, and \eqref{1200}, we obtain
\begin{align*}
\langle z^{T_n}(t), \varphi\rangle 
&= \langle u^{T_n}(t), V\varphi\rangle + \gamma\langle |u^{T_n}(t)|^2, \tilde\varphi_\eps\rangle +\gamma \langle |u^{T_n}(t)|^2, \langle \nabla \rangle^{-2} \varphi -\tilde\varphi_\eps\rangle\\
&=\langle u^{T_n}(t), V\varphi\rangle + \gamma\langle |u^{T_n}(t)|^2, \tilde\varphi_\eps\rangle + O(\eps)\\
&\to \langle u^{\infty}(t), V\varphi\rangle + \gamma\langle |u^{\infty}(t)|^2, \tilde\varphi_\eps\rangle + O(\eps) = \langle z^{\infty}(t), \varphi\rangle +O(\eps).
\end{align*}
As $\eps>0$ was arbitrary, this proves \eqref{1201}.

To continue, we write
\begin{align*}
e^{itH} z^\infty(t) - z_+ &= [e^{itH} z^\infty(t) - e^{iT_0H} z^\infty(T_0)] + [e^{iT_0H} z^\infty(T_0) - e^{iT_0H} z^{T_n}(T_0)]\\
&\quad + [e^{iT_0H} z^{T_n}(T_0) - e^{iT_nH} z^{T_n}(T_n)].
\end{align*}

As the above is bounded in $H^1_x$, it suffices to prove weak convergence when testing against a dense set of functions in $H^{-1}_x$.  In this role, we take $\varphi\in \mathcal S(\R^3)$ with $\hat \varphi\in C^\infty_c(\R^3\setminus\{0\})$.  To continue, we choose $N_0\in 2^{\mathbb Z}$ so that $\supp\hat \varphi\subset\{|\xi|\geq 100N_0\}$ and fix $\eps>0$.  By \eqref{1201}, there exists $n$ sufficiently large (depending on $T_0$) so that
\begin{align}\label{117}
|\langle e^{iT_0H} z^\infty(T_0) - e^{iT_0H} z^{T_n}(T_0), \varphi\rangle| \leq \eps.
\end{align}
To handle the remaining two differences, we will prove the following inequality:
\begin{align}\label{118}
|\langle e^{it_2H} z(t_2) - e^{it_1H} z(t_1), \varphi\rangle|\lesssim_\varphi |t_1|^{-\frac14}
\end{align} 
uniformly for $t_2>t_1$, where $z$ denotes any of the functions $z^{T_n}$.  In view of \eqref{1201}, we see that \eqref{118} also holds (with the same implicit constant) for $z=z^\infty$.  Thus, taking $T_0$ large enough depending on $\eps$ and then $n$ large enough so that $T_n>T_0$ and \eqref{117} holds, we get
$$
\sup_{t>T_0}|\langle e^{itH} z^\infty(t) - z_+, \varphi\rangle|\lesssim_\varphi \eps.
$$
As $\eps>0$ was arbitrary, this proves Proposition~\ref{P:weak convergence}.

It remains to verify \eqref{118}. By Duhamel's formula, we have
\begin{align}\label{eq:equicts1}
|\langle e^{it_2H} z(t_2) - e^{it_1H} z(t_1), \varphi\rangle| \leq \int_{t_1}^{t_2} |\langle N_z(u(s)), e^{-isH}\varphi\rangle|\, ds.
\end{align}
To continue, we decompose the nonlinearity as follows:
$$
N_z(u)=N_z^1(u) - \gamma N_z^2(u),
$$
where
\begin{align*}
N_z^1(u)&=U\bigl[\tfrac{\gamma+2}2q^2+q^2u_1-\gamma u_1^3-\tfrac\gamma2|u|^4\bigr] \nonumber\\
&\quad -2\gamma i\jb^{-2}\nabla\cdot[q\nabla  u_2-u_1^2\nabla u_2]+iU^2[\gamma u_1^2u_2 +q^2u_2], 
\\
N_z^2(u)&= U(u_1u_2^2)-\tfrac{i}3U^2(u_2^3). 
\end{align*}

We first estimate the contribution of $N_z^1(u)$ to \eqref{eq:equicts1}.  By H\"older and the dispersive estimate \eqref{eq:dispersive}, we can estimate
\begin{align*}
\int_{t_1}^{t_2}\big\vert\langle N_z^1(u(s)),e^{-isH}\varphi\rangle\big\vert\,ds&\lesssim\int_{t_1}^{t_2}\|N_z^1(u(s))\|_{L_x^{12/11}}\|e^{-isH}\varphi\|_{L_x^{12}}\,ds\\ &\lesssim_\varphi\int_{t_1}^{t_2}\vert s\vert^{-5/4}\|N_z^1(u(s))\|_{L_x^{12/11}}\,ds.
\end{align*}
Most of the terms appearing in $\Re(N_z^1)$ can be handled using H\"older and \eqref{eq:uniform bounds}:  
\begin{align*}
&\bigl\|U\bigl[\tfrac{\gamma+2}2q^2+q^2u_1-\gamma u_1^3\bigr]\bigr\|_{L_x^{12/11}}
\lesssim\|q\|_{L_x^{24/11}}^2+\|q\|_{L_x^{8/3}}^2\|u_1\|_{L_x^6}+\|u_1\|_{L_x^{36/11}}^3\lesssim 1.
\end{align*}
To estimate the remaining term in $\Re(N_z^1)$ we also use  the fractional chain rule and Sobolev embedding, as follows:
\begin{align*}
\|U(|u|^4)\|_{L_x^{12/11}}
&\lesssim \||\nabla|^{\frac34} (|u|^4)\|_{L_x^{12/11}}\lesssim \||\nabla|^{\frac34} u\|_{L_x^{12/5}}\|u\|_{L_x^6}^3\lesssim\|\nabla u\|_{L_x^2}^4\lesssim 1.
	\end{align*}
To estimate the terms in $\Im(N_z^1)$, we use H\"older and \eqref{eq:uniform bounds}: 
\begin{align*}
\|\jb^{-2}\nabla\cdot[q\nabla  u_2-u_1^2\nabla u_2]\|_{L_x^{12/11}}
&\lesssim\|\nabla u_2\|_{L_x^2}\|q\|_{L_x^{12/5}}+\|\nabla u_2\|_{L_x^2}\|u_1\|_{L_x^{24/5}}^2\lesssim 1,\\
\|U^2[\gamma u_1^2u_2 +q^2u_2] \|_{L_x^{12/11}}&\lesssim \|\nabla (u_1^2u_2)\|_{L_x^{12/11}} + \|q^2u_2\|_{L_x^{12/11}}\\
&\lesssim \|u_1\|_{L_x^{4}}\|u\|_{L_x^6}\|\nabla u\|_{L_x^2} + \|q\|_{L_x^{8/3}}^2\|u_2\|_{L_x^6}\lesssim 1.
\end{align*}	
Putting everything together, we find
\begin{align*}
\int_{t_1}^{t_2}\big\vert\langle N_z^1(u(s)),e^{-isH}\varphi\rangle\big\vert\,ds &\lesssim_\varphi|t_1|^{-\frac14}.
\end{align*}
	
We turn now to estimating the contribution of $N_z^2(u)$ to \eqref{eq:equicts1}.  To complete the proof of \eqref{118} and so that of the proposition, we must show that
\begin{align}\label{Nz2 goal}
\int_{t_1}^{t_2}\big\vert\langle N_z^2(u(s)),e^{-isH}\varphi\rangle\big\vert\,ds &\lesssim_\varphi|t_1|^{-\frac14}.
\end{align}
Recalling that $\supp\hat \varphi\subset\{|\xi|\geq 100N_0\}$, we see that
$$
\langle P_{\leq 20N_0}N_z^2(u(s)),e^{-isH}\varphi\rangle \equiv 0.
$$
Writing $u_2= P_{\leq N_0} u_2 + P_{>N_0}u_2$, we may decompose the remaining part of $N_z^2(u)$ as follows:
\begin{align*}
P_{>20N_0}N_z^2(u)
&= P_{>20N_0}U\text{\O}\big(u_1u_2 P_{> N_0}u_2\big) + P_{>20N_0}U^2\text{\O}\big(u_2 [P_{>N_0}u_2]^2\big) \\
&\quad + P_{>20N_0}U\bigl\{[P_{> 8N_0}u_1][P_{\leq N_0}u_2]^2-iU\big([P_{> 8N_0}u_2][P_{\leq N_0}u_2]^2\big)\bigr\}.
\end{align*}
Writing $u_1= \bar v + iUu_2$ (with $v=Vu$) and $a:=[P_{\leq N_0}u_2]^2$, we arrive at the following decomposition:
\begin{align}
P_{>20N_0}N_z^2(u)
&=P_{>20N_0}U\text{\O}\big(u_1u_2 P_{> N_0}u_2\big) + P_{>20N_0}U^2\text{\O}\big(u_2 [P_{>N_0}u_2]^2\big)\label{eq:Nz hard1}\\
&\quad + iP_{>20N_0}U\bigl\{aU(P_{> 8N_0}u_2)-U\big(aP_{> 8N_0}u_2\big)\bigr\}\label{eq:Nz hard2}\\
&\quad +P_{>20N_0}U\bigl\{a P_{> 8N_0}\bar v] \label{eq:Nz hard3}.
\end{align}

As we will see, the terms in \eqref{eq:Nz hard1} and \eqref{eq:Nz hard2} are small. However, there is no reason to believe that \eqref{eq:Nz hard3} is small pointwise in time; instead, we will show that this term is non-resonant. 

We first consider \eqref{eq:Nz hard1}.  Using H\"older, Bernstein, and \eqref{eq:uniform bounds}, we estimate 
\begin{align*}
\|\eqref{eq:Nz hard1}\|_{L_x^{12/11}}
&\lesssim \|u_1\|_{L_x^4}\|u_2\|_{L_x^6}\|P_{>N_0}u_2\|_{L_x^2} + \|u_2\|_{L_x^6} \|P_{>N_0}u_2\|_{L_x^{8/3}}^2\\
&\lesssim_{N_0} \|u_1\|_{L_x^4}\|\nabla u_2\|_{L_x^2}^2 + \|\nabla u_2\|_{L_x^2}^3\lesssim_{N_0} 1.
\end{align*}
Thus, the contribution of this term to \eqref{Nz2 goal} is acceptable. 

We now turn to \eqref{eq:Nz hard2}, which includes the commutator $[a,U]$.  We regard this term as a bilinear operator $T(a, P_{> 8N_0} u_2)$ with symbol given by
\begin{align*}
\bigl[1- \phi\bigl(\tfrac{\xi}{20N_0}\bigr) \bigr]&U(\xi) \bigl[U(\xi_2)- U(\xi)\bigr]\phi\bigl(\tfrac{\xi_1}{4N_0} \bigr)\\
& = \Bigl\{-2\gamma \frac{U(\xi)(\xi_1+2\xi_2)}{\langle \xi\rangle \langle \xi_2\rangle(|\xi_2|\langle \xi\rangle+ |\xi|\langle \xi_2\rangle)}\bigl[1- \phi\bigl(\tfrac{\xi}{20N_0}\bigr) \bigr]\phi\bigl(\tfrac{\xi_1}{4N_0} \bigr)\Bigr\} \cdot \xi_1,
\end{align*}
where $\phi$ denotes the standard Littlewood--Paley multiplier.  Observing that the multiplier inside the braces is amenable to Lemma~\ref{L:CM}, we may estimate
\begin{align*}
\|\eqref{eq:Nz hard2}\|_{L_x^{12/11}}
&\lesssim \|\nabla a\|_{L_x^{3/2}}\|P_{> 8N_0} u_2\|_{L_x^4}
\lesssim_{N_0} \|\nabla P_{\leq N_0} u_2\|_{L_x^2} \|P_{\leq N_0}u_2\|_{L_x^6} \|\nabla u_2\|_{L_x^2}.
\end{align*}
In view of \eqref{eq:uniform bounds}, the contribution of this term to \eqref{Nz2 goal} is acceptable.

Finally, we consider \eqref{eq:Nz hard3}.  Using equation \eqref{eq:cq v}, we find
\begin{align*}
i\partial_t\langle aP_{>8N_0}\bar v, e^{-itH} \tfrac{U}{2H}\varphi\rangle
&= \langle U(aP_{>8N_0}\bar v), e^{-itH} \varphi\rangle + \langle aP_{>8N_0}\overline{N_v(u)}, e^{-itH} \tfrac{U}{2H}\varphi\rangle\\
&\quad  + \langle [a,H] P_{>8N_0}\bar v, e^{-itH} \tfrac{U}{2H}\varphi\rangle + i\langle \dot{a}P_{>8N_0}\bar v, e^{-itH} \tfrac{U}{2H}\varphi\rangle.
\end{align*}
By the fundamental theorem of calculus, we may thus estimate the contribution of \eqref{eq:Nz hard3} to \eqref{Nz2 goal} as follows:
\begin{align}\label{404}
&\int_{t_1}^{t_2} \big|\langle U(aP_{>8N_0}\bar v), e^{-isH} \varphi\rangle\big|\, ds\\
&\lesssim \sup_{t\geq t_1} \big|\langle aP_{>8N_0}\bar v, e^{-itH} \langle \nabla\rangle^{-2}\varphi\rangle\big| + \int_{t_1}^{t_2} \big|\langle aP_{>8N_0}\overline{N_v(u)}, e^{-isH} \langle \nabla\rangle^{-2}\varphi\rangle\big|\,ds\notag\\
&\quad+\int_{t_1}^{t_2} \big|\big\langle  \langle \nabla\rangle^{-2} \bigl([a,H] P_{>8N_0}\bar v\bigr), e^{-isH}\varphi\big\rangle\big|\,ds+\int_{t_1}^{t_2} \big|\langle \dot{a}P_{>8N_0}\bar v, e^{-isH} \langle \nabla\rangle^{-2}\varphi\rangle\big|\,ds.\notag
\end{align}

To estimate the terms on RHS\eqref{404}, we note that in view of \eqref{eq:uniform bounds},
\begin{align}\label{405}
\|\nabla a(t)\|_{L_x^{3/2}\cap L_x^\infty} + \| a(t)\|_{L_x^3\cap L_x^\infty} + \|\nabla v(t)\|_{L_x^2} + \|v(t)\|_{L_x^3\cap L_x^6}\lesssim_{N_0} 1
\end{align}
uniformly for $t\in\R$.
Using this, H\"older, and \eqref{eq:dispersive}, we estimate the first term on RHS\eqref{404} as follows:
\begin{align*}
\sup_{t\geq t_1} \big|\langle aP_{>8N_0}\bar v, e^{-itH} \langle \nabla\rangle^{-2}\varphi\rangle\big|
&\lesssim \|a\|_{L_t^\infty L_x^3} \|v\|_{L_t^\infty L_x^3} \sup_{t\geq t_1}\|e^{-itH}\langle \nabla\rangle^{-2}\varphi\|_{L_x^3} \\
&\lesssim_{\varphi,N_0} |t_1|^{-\frac12}. 
\end{align*}
Thus, the contribution of this term to \eqref{Nz2 goal} is acceptable.

Next we consider the second term on RHS\eqref{404}. By H\"older and \eqref{eq:dispersive},
\begin{align*}
&\int_{t_1}^{t_2} \big|\langle aP_{>8N_0}\overline{N_v(u)}, e^{-isH} \langle \nabla\rangle^{-2}\varphi\rangle\big|\,ds
\lesssim_\varphi |t_1|^{-\frac12}\|aP_{>8N_0}N_v(u)\|_{L_t^\infty L_x^1}.
\end{align*}
Note that
$$
N_v(u) = \sum_{k=2}^5 U\text{\O}(u^k)+\text{\O}(u^k).
$$
To estimate the contribution of the quadratic terms in $N_v(u)$, we use Bernstein, \eqref{eq:uniform bounds}, and \eqref{405}:
\begin{align*}
\bigl\|aP_{>8N_0}\bigl[U\text{\O}(u^2)+\text{\O}(u^2)\bigr]\bigr\|_{L_t^\infty L_x^1}
&\lesssim_{N_0} \| a\|_{L_t^\infty L_x^3} \|\nabla \text{\O}(u^2)\|_{L_t^\infty L_x^{3/2}}\\
&\lesssim_{N_0} \| a\|_{L_t^\infty L_x^3} \|\nabla u\|_{L_t^\infty L_x^2} \|u\|_{L_t^\infty L_x^6}\lesssim_{N_0}1.
\end{align*}
Similarly, we can estimate the cubic terms in $N_v(u)$ via
\begin{align*}
\bigl\|aP_{>8N_0}\bigl[U\text{\O}(u^3)+\text{\O}(u^3)\bigr]\bigr\|_{L_t^\infty L_x^1}
&\lesssim_{N_0} \| a\|_{L_t^\infty L_x^6} \|\nabla \text{\O}(u^3)\|_{L_t^\infty L_x^{6/5}}\\
&\lesssim_{N_0} \| a\|_{L_t^\infty L_x^6} \|\nabla u\|_{L_t^\infty L_x^2} \|u\|_{L_t^\infty L_x^6}^2\lesssim_{N_0}1.
\end{align*}
We estimate the quartic and quintic terms in $N_v(u)$ using H\"older, \eqref{eq:uniform bounds}, and \eqref{405}:
\begin{align*}
&\bigl\|aP_{>8N_0}\bigl[U\text{\O}(u^4)+\text{\O}(u^4)\bigr]\bigr\|_{L_t^\infty L_x^1}\lesssim \|a\|_{L_t^\infty L_x^3} \|u\|_{L_t^\infty L_x^6}^4\lesssim 1,\\
&\bigl\|aP_{>8N_0}\bigl[U\text{\O}(u^5)+\text{\O}(u^5)\bigr]\bigr\|_{L_t^\infty L_x^1}\lesssim \|a\|_{L_t^\infty L_x^6} \|u\|_{L_t^\infty L_x^6}^5\lesssim 1.
\end{align*}
Putting everything together, we see that the contribution of the second term on RHS\eqref{404} to \eqref{Nz2 goal} is acceptable.

We now turn to the third term on RHS\eqref{404}.  By H\"older and \eqref{eq:dispersive}, 
\begin{align*}
\int_{t_1}^{t_2} \big|\big\langle  \langle \nabla\rangle^{-2} &\bigl([a,H] P_{>8N_0}\bar v\bigr), e^{-isH}\varphi\big\rangle\big|\,ds
\lesssim_\varphi |t_1|^{-\frac14}  \big\|\langle \nabla\rangle^{-2} \bigl([a,H] P_{>8N_0}\bar v\bigr)\big\|_{L_t^\infty L_x^{\frac{12}{11}}}.
\end{align*}
We regard the term on the right-hand side above as a bilinear operator $T(a,v)$ with symbol given by
\begin{align*}
\tfrac{H(\xi_2) - H(\xi)}{\langle \xi\rangle^2}\phi\bigl(\tfrac{\xi_1}{4N_0} \bigr)\bigl[1- \phi\bigl(\tfrac{\xi_2}{8N_0}\bigr) \bigr] = m(\xi_1, \xi_2)\cdot \xi_1,
\end{align*}
where 
$$ m(\xi_1, \xi_2)=-\frac{(2\gamma+\vert\xi\vert^2+\vert\xi_2\vert^2)(\xi_1+2\xi_2)}{\langle \xi\rangle^2(\vert\xi_2\vert\langle\xi_2\rangle+\vert\xi\vert\langle\xi\rangle)}\phi\bigl(\tfrac{\xi_1}{4N_0} \bigr)\bigl[1- \phi\bigl(\tfrac{\xi_2}{8N_0}\bigr) \bigr]
$$ 
is a bounded bilinear multiplier in view of Lemma~\ref{L:CM}.  Using also \eqref{405}, we get
\begin{align*}
\big\|\langle \nabla\rangle^{-2} \bigl([a,H] P_{>8N_0}\bar v\bigr)\big\|_{L_t^\infty L_x^{12/11}}
\lesssim \|\nabla a\|_{L_t^\infty L_x^{3/2}} \|v\|_{L_t^\infty L_x^4}\lesssim_{N_0}1.
\end{align*}
Thus, the contribution of the third term on RHS\eqref{404} to \eqref{Nz2 goal} is acceptable.

We now turn to the fourth and last term on RHS\eqref{404}.  By H\"older, \eqref{eq:dispersive}, and Bernstein, 
\begin{align*}
\int_{t_1}^{t_2}\! \big|\langle \dot{a}P_{>8N_0}\bar v, e^{-isH} \langle \nabla\rangle^{-2}\varphi\rangle\big|\,ds
&\lesssim_\varphi |t_1|^{-\frac12} \|\dot{a}\|_{L_t^\infty L_x^2} \|P_{>8N_0} v\|_{L_t^\infty L_x^2}\\
&\lesssim_{\varphi, N_0}\! |t_1|^{-\frac12} \|P_{\leq N_0} \dot{u}_2\|_{L_t^\infty L_x^3} \|u_2\|_{L_t^\infty L_x^6} \|\nabla v\|_{L_t^\infty L_x^2}.
\end{align*}
In view of \eqref{eq:uniform bounds} and \eqref{405}, we need only bound $P_{\leq N_0} \dot{u}_2$ in $L_t^\infty L_x^3$.  To this end, we use \eqref{eq:cq3}, Bernstein, and \eqref{eq:uniform bounds}:
\begin{align*}
\|P_{\leq N_0} \dot{u}_2\|_{L_t^\infty L_x^3}
&\lesssim \|P_{\leq N_0} (2\gamma-\Delta)u_1\|_{L_t^\infty L_x^3} + \sum_{k=2}^5 \|P_{\leq N_0}\text{\O} (u^k)\|_{L_t^\infty L_x^3}\\
&\lesssim_{N_0} \|u_1\|_{L_t^\infty L_x^3} + \sum_{k=2}^5 \|u\|_{L_t^\infty L_x^6}^k\lesssim_{N_0} 1.
\end{align*}
Thus, the contribution of the fourth term on RHS\eqref{404} to \eqref{Nz2 goal} is acceptable.  This completes the justification of \eqref{Nz2 goal} and so the proof of Proposition~\ref{P:weak convergence}.
\end{proof}

					\section{Proof of Theorem~\ref{thm:wave ops2}}
					\label{section:fs2}

In this section we prove Theorem~\ref{thm:wave ops2} and Corollary~\ref{cor}. We recall the norm
$$
\|u\|_{X_T}:=\sup_{t\geq T} \, t^{\frac12}\|u(t)\|_{H_x^{1,3}(\R^3)}.
$$

The proof of Theorem~\ref{thm:wave ops2} will be effected by running a contraction mapping argument simultaneously for $u$ and $z=M(u)$.  The necessity of exploiting the normal form transformation can be seen when one endeavors to estimate the quadratic terms appearing in the nonlinearity.  

\begin{proof}[Proof of Theorem~\ref{thm:wave ops2}.]
We define maps
\begin{equation*}
\left\{\begin{aligned}\relax
[\Phi_1(u,z)](t)&=V^{-1}z(t)-\gamma\jb^{-2}\vert u(t)\vert^2,\\[1ex]
\left[\Phi_2(u)\right](t)&=e^{-itH}Vu_++i\displaystyle\int_t^\infty e^{-i(t-s)H}N_z(u(s))\,ds,
\end{aligned}\right.
\end{equation*} 
where $N_z$ is as in \eqref{eq:cq z}. 

We will show that the map $(u,z)\mapsto\Phi(u,z):=(\Phi_1(u,z),\Phi_2(u))$ is a contraction on a suitable complete metric space, and so deduce that $\Phi$ has a unique fixed point $(u,z)$ in this space, which then necessarily solves \eqref{eq:nft}--\eqref{eq:cq z}.

For $0<\eta<1$ and $T>1$ to be determined below, we define
	\begin{align*}
	B_1=\{u: \|u\|_{L_t^\infty (\Hr)}\leq {4}\|u_+\|_{\Hr},\ \|u\|_{X_T}\leq4\eta\},
	\end{align*}
and
	\begin{align*}
	B_2=\{z: \|z\|_{L_t^\infty H_x^1}\leq 2\|u_+\|_{\Hr},\ \|V^{-1} z\|_{X_T}\leq 2\eta\},
	\end{align*}
where here and in what follows all space-time norms are taken over $(T,\infty)\times\R^3$ unless stated otherwise.  We define $B=B_1\times B_2$ and equip $B$ with the metric
$$
d((u,z),(\tilde{u},\tilde{z}))=\|u-\tilde{u}\|_{X_T}+8\|V^{-1}(z-\tilde{z})\|_{X_T}.
$$		

We first show that $\Phi:B\to B$.   By Sobolev embedding, for $(u,z)\in B$ and $t>T\geq 1$,
\begin{align*}
\|\jb^{-2}\vert u(t)\vert^2\|_{ H_x^1} + \|\jb^{-2}\vert u(t)\vert^2\|_{H_x^{1,3}}\lesssim \|\vert u(t)\vert^2\|_{L_x^{3/2}}\lesssim \|u(t)\|_{L_x^3}^2\lesssim \eta^2 t^{-1}.
	\end{align*}
Thus choosing $T=T(\|u_+\|_{\Hr})$ large enough, we have
\begin{align*}
\|[\Phi_1(u,z)](t)\|_{\Hr}&\leq\|V^{-1}z(t)\|_{\Hr}+\gamma \|\jb^{-2}\vert u(t)\vert^2\|_{ H_x^1}\\&\leq  2\|z\|_{L_t^\infty H_x^1}.
\end{align*}	
Similarly,
\begin{align*}
\|[\Phi_1(u,z)](t)\|_{H_x^{1,3}}\leq\|V^{-1}z(t)\|_{H_x^{1,3}}+\gamma \|\jb^{-2}\vert u(t)\vert^2\|_{H_x^{1,3}}\leq 4\eta t^{-1/2},
\end{align*}
provided $\eta$ is chosen small enough.  Thus $\Phi_1:B\to B_1$.

We next show that $\Phi_2:B_1\to B_2$.  We first estimate $N_z(u)$, which satisfies
\begin{equation}\label{eq:z nonlinearity}
U^{-1}N_z(u)=\text{\O}(u^2+u^3+u^4+u^5)+U^{-1}\tfrac{\nabla}{\jb^2}\cdot\text{\O}(u\nabla u)+ U\text{\O}(u^3+u^4+u^5).
\end{equation} 
We estimate the quadratic terms at fixed time $t>T\geq 1$, as follows: 
\begin{align*}
\big\|\big[\text{\O}(u^2)+U^{-1}\tfrac{\nabla}{\jb^2}\cdot\text{\O}(u\nabla u)\big](t)\big\|_{H_x^{1,3/2}}\lesssim t^{-1} \|u\|_{X_T}^2\lesssim t^{-1} \eta^2.
\end{align*}
Similarly, for $k\in\{2,3,4\}$ we have
\begin{align}\label{132}
\big\|\big[\text{\O}(u^{k+1})+U\text{\O}(u^{k+1})\big](t)\big\|_{H_x^{1,3/2}}
&\lesssim \|u(t)\|_{L_x^{3k}}^k\|u(t)\|_{H_x^{1,3}}\\
&\lesssim \|\vert\nabla\vert^{1-\frac1k}u(t)\|_{L_x^3}^k\|u(t)\|_{H_x^{1,3}}\notag\\
&\lesssim t^{-\frac{k+1}{2}}\|u\|_{X_T}^{k+1}\lesssim t^{-\frac{k+1}{2}}\eta^{k+1}.\notag
\end{align}
Combining the above, we deduce that 
\begin{align}\label{131}
\|U^{-1}N_z(u(t))\|_{H_x^{1,3/2}} \lesssim \sum_{k=1}^4 t^{-\frac{k+1}{2}}\eta^{k+1} \quad\text{uniformly for $t>T\geq 1$.}
\end{align}

To continue, we use Strichartz and \eqref{131} to estimate
\begin{align*}
\|\Phi_2(u)\|_{L_t^\infty H_x^1}
&\leq \|Vu_+\|_{L_t^\infty H_x^1}+C\|N_z(u)\|_{L_t^{4/3}H_x^{1,3/2}}\\
&\leq\|u_+\|_{\Hr}+C\sum_{k=1}^4 T^{-\frac{2k-1}{4}}\eta^{k+1} \leq2\|u_+\|_{\Hr}, 
\end{align*}
provided $T=T(\|u_+\|_{\Hr})$ is chosen sufficiently large.

We turn to estimating $V^{-1}\Phi_2(u)$ in the $X$-norm for $u\in B_1$.  By hypothesis, the dispersive estimate \eqref{eq:dispersive}, and \eqref{131}, for $t>T\geq 1$ we have
\begin{align*}
\|[V^{-1}\Phi_2(u)](t)\|_{H_x^{1,3}}&\!\leq \|V^{-1}e^{-itH}Vu_+\|_{H_x^{1,3}}\!+\Bigl\|\int_t^\infty\!\!\!\! V^{-1}[ie^{-i(t-s)H}\!N_z(u(s))]\,ds\Bigr\|_{H_x^{1,3}}\\
&\leq \eta t^{-\frac12} + \int_t^\infty \|e^{-i(t-s)H}U^{-1}N_z(u(s))]\|_{H_x^{1,3}}\, ds\\
&\leq \eta t^{-\frac12} + C\int_t^\infty \vert t-s\vert^{-\frac12}\sum_{k=1}^4 s^{-\frac{k+1}{2}}\eta^{k+1}\,ds\\
&\leq \eta t^{-\frac12} + C\sum_{k=1}^4t^{-\frac{k}{2}}\eta^{k+1} \leq 2\eta t^{-\frac{1}{2}},
\end{align*}
provided $\eta$ is chosen sufficiently small.  This completes the proof that $\Phi:B\to B$.

We next claim that $\Phi$ is a contraction with respect to the metric defined above.  First, for $(u,z), (\tilde u, \tilde z)\in B$, we estimate
\begin{align*}
\|\Phi_1(u,z)-\Phi_1(\tilde{u},\tilde{z})\|_{X_T}
&\leq\|V^{-1}(z-\tilde{z})\|_{X_T}+\gamma\|\jb^{-2}(\vert u\vert^2-\vert \tilde{u}\vert^2)\|_{X_T}\\
&\leq \tfrac18 d((u,z),(\tilde{u},\tilde{z})) +C \sup_{t\geq T}t^{\frac12}\|(u+\tilde{u})(t)(u-\tilde{u})(t)\|_{L_x^{3/2}}\\
&\leq \tfrac18 d((u,z),(\tilde{u},\tilde{z})) + C\eta T^{-\frac12}\|u-\tilde{u}\|_{X_T} \\
&\leq \tfrac14 d((u,z),(\tilde{u},\tilde{z})),
\end{align*}
provided $\eta$ is sufficiently small.  

By \eqref{eq:dispersive}, for $t>T\geq 1$ we estimate
\begin{align*}
\|V^{-1}[\Phi_2(u)&-\Phi_2(\tilde{u})](t)\|_{H_x^{1,3}}\\
&\leq\Bigl\|\int_t^\infty V^{-1}\big(ie^{-i(t-s)H}[N_z(u(s))-N_z(\tilde{u}(s))]\big)\,ds\Bigr\|_{H_x^{1,3}}\\
&\leq \int_t^\infty \vert t-s\vert^{-\frac12}\|U^{-1}[N_z(u(s))-N_z(\tilde{u}(s))]\|_{H_x^{1,3/2}}\,ds.
\end{align*}
Writing $w$ to indicate that either $u$ or $\tilde{u}$ may appear, we have
\begin{align*}
U^{-1}[N_z(u)-N_z(\tilde{u})]
&=\sum_{k=1}^4\text{\O}[w^k(u-\tilde{u})]+\tfrac{U^{-1}\nabla}{\jb^2}\cdot[(u-\tilde{u})\nabla w+w\nabla(u-\tilde{u})] \\
&\quad +U\sum_{k=2}^4 \text{\O}[w^k(u-\tilde{u})].
\end{align*}
We estimate the contribution of the quadratic terms via
\begin{align*}
\int_t^\infty \vert t-s\vert^{-\frac12}&\big\|\text{\O}[w(u-\tilde{u})](s)+\tfrac{U^{-1}\nabla}{\jb^2}\cdot[(u-\tilde{u})\nabla w+w\nabla(u-\tilde{u})](s)\big\|_{H_x^{1,3/2}}\,ds\\
&\lesssim \|w\|_{X_T}\|u-\tilde{u}\|_{X_T}\int_t^\infty \vert t-s\vert^{-\frac12} s^{-1}\, ds\lesssim t^{-\frac12}\eta\|u-\tilde{u}\|_{X_T}.
\end{align*}
Arguing as in \eqref{132}, we obtain
\begin{align*}
\int_t^\infty &\vert t-s\vert^{-\frac12}\big\| \sum_{k=2}^4\text{\O}[w^k(u-\tilde{u})](s)+U\text{\O}[w^k(u-\tilde{u})](s)\big\|_{H_x^{1,3/2}}\,ds\\
&\lesssim \|w\|_{X_T}^k\|u-\tilde{u}\|_{X_T}\int_t^\infty \vert t-s\vert^{-\frac12} s^{-\frac{k+1}{2}}\,ds \lesssim t^{-\frac{k}{2}}\eta^k \|u-\tilde{u}\|_{X_T}.
\end{align*}
Thus for $\eta$ sufficiently small we get
$$
8\|V^{-1}[\Phi_2(u)-\Phi_2(\tilde{u})]\|_{X_T}\leq \tfrac14 d((u,z),(\tilde{u},\tilde{z})).
$$

This completes the proof that $\Phi$ is a contraction on $B$.  Hence there exists a unique $(u,z)\in B$ such that $\Phi(u,z)=(u,z)$. In particular $z=M(u)$ and $(u,z)$ solves \eqref{eq:nft}--\eqref{eq:cq z} on $(T,\infty)\times\R^3$. We note that by construction we have $u_1\in H_x^1$ and $u\in L_x^3\cap L_x^6$. In particular, $q(u)=\vert u\vert^2+2u_1\in L_x^2$ and hence $u(t)\in\mathcal{E}$ for $t>T$.

For $\gamma\in[\frac23,1)$, Theorem~\ref{thm:gwp} guarantees that the solution $u$ can be extended (in a unique way) to be global in time.  For $\gamma\in(0,\frac23)$, global existence follows from \cite[Theorem~1.3]{KOPV}, while uniqueness in the energy space follows from Theorem~\ref{thm:gwp} (see also Remark~\ref{R:unique}).

Next we show that \eqref{eq:scattering1} holds; indeed, we prove the stronger claim \eqref{E:scattering11}. We first note that Strichartz combined with \eqref{131} gives
$$
 \|z(t)-e^{-itH}Vu_+\|_{H_x^1} \lesssim t^{-1/4},
$$
which in turn implies
$$
\|V^{-1}z(t)-V^{-1}e^{-itH}Vu_+\|_{\Hr}  \lesssim t^{-1/4}.
$$ 
As $z=M(u)$, for $t>T$ we have
\begin{align*}
\|V^{-1}z(t)-u(t)\|_{\Hr}&\lesssim\|\jb^{-2}\vert u(t)\vert^2\|_{H_x^1}\lesssim \|\vert u(t)\vert^2\|_{L_x^{3/2}}\lesssim t^{-1}\|u\|_{X_T}^2,
\end{align*}
Therefore, by the triangle inequality we may conclude that
$$
\| u(t)-V^{-1}e^{-itH}Vu_+]\|_{\Hr} \lesssim t^{-1/4}.
$$

By the arguments presented so far, it is clear that $u$ is the unique solution in $B_1$ that obeys \eqref{eq:scattering1}.  This is slightly weaker than is claimed in Theorem~\ref{thm:wave ops2}, which places no restrictions on the $\Hr$ norm of alternate solutions $v(t)$, nor any restriction on the value of $T$ for which $\|v\|_{X_T} \leq 4\eta$; however, any solution $v(t)$ obeying \eqref{eq:scattering1} must have 
$$
\|v\|_{L_t^\infty ([T,\infty);\Hr)}\leq {4}\|u_+\|_{\Hr}
$$
for some $T$ large enough.  Thus the equality of $v(t)$ and $u(t)$ follows from the contraction mapping argument above with $T$ large enough combined with uniqueness in the energy space.
\end{proof}

Finally, we prove Corollary~\ref{cor}. 
 
\begin{proof}[Proof of Corollary~\ref{cor}] The proof consists of showing that smallness of the weighted norms implies the smallness condition \eqref{eq:scattering smallness condition}.  In view of \eqref{E:matrix prop}, it suffices to show
\begin{align*}
\| e^{\pm itH} u_+\|_{H_x^{1,3}}\lesssim \vert t\vert^{-\frac12}\eta \qtq{and} \| e^{\pm itH}U^{-1}\Re u_+\|_{H_x^{1,3}}\lesssim \vert t\vert^{-\frac12}\eta.
\end{align*}

By the dispersive estimate \eqref{eq:dispersive} and H\"older,
\begin{align*}
\|e^{\pm itH}u_+\|_{H_x^{1,3}}&
\lesssim \vert t\vert^{-\frac12}\|\langle\nabla\rangle u_+\|_{L_x^{3/2}}\lesssim \vert t \vert^{-\frac12}\|\langle x\rangle^{\frac12+}\langle\nabla\rangle u_+\|_{L_x^2}
\end{align*}
and
\begin{align*}
\|e^{\pm itH}U^{-1}\Re u_+\|_{H_x^{1,3}}&
\lesssim \vert t\vert^{-\frac12}\|U^{-\frac56}\langle\nabla\rangle \Re u_+\|_{L_x^{3/2}}.
\end{align*}
Using H\"older and Sobolev embedding, we obtain
\begin{align*}
\|\nabla U^{-\frac56}\Re u_+\|_{L_x^{3/2}}&\lesssim\|\langle\nabla\rangle u_+\|_{L_x^{3/2}}\lesssim \|\langle x\rangle^{\frac12+}\langle\nabla\rangle u_+\|_{L_x^2}\\
\|U^{-\frac56}\Re u_+\|_{L_x^{3/2}}&\lesssim \|\vert\nabla\vert^{\frac56}U^{-\frac56}\Re u_+\|_{L_x^{18/17}}
	\lesssim \|\langle x\rangle^{\frac43+}\langle\nabla\rangle^{\frac56}\Re u_+\|_{L_x^2}.
\end{align*} 

This completes the proof of the corollary.
\end{proof}

\end{document}